\title{Skew Brownian diffusions across Koch interfaces}
\author{\textsc{Raffaela Capitanelli}\footnote{
Dipartimento di Scienze di Base e Applicate per l'Ingegneria, Sapienza"  Universit\`{a}  di Roma,
Via A. Scarpa 16,  00161 Roma, Italy} \ \
\& \ \
\textsc{Mirko D'Ovidio}$^\ast$\\
{\small raffaela.capitanelli@uniroma1.it, mirko.dovidio@uniroma1.it}
}
\newtheorem{theorem}{Theorem}[section]
\newtheorem{proposition}{Proposition}[section]
\newtheorem{remark}{Remark}[section]
\newtheorem{definition}{Definition}[section]
\newcommand{\N}{{\mathbb N}}
\newcommand{\C}{{\mathbb C}}
\def\phi{\varphi}
\def\div{\mathop{\rm div}}
\def\phi{\varphi}
\def\div{\mathop{\rm div}}
\def\phi{\varphi}
\newcommand{\re}{\mathbb{R}}
\def\beq{\begin{equation}}
\def\eeq{\end{equation}}
\def\beqn{\begin{eqnarray}}
\def\eeqn{\end{eqnarray}}
\numberwithin{equation}{section}
\numberwithin{figure}{section}
\begin{document}

\maketitle

\begin{abstract}
We consider planar skew Brownian motion (BM) across pre-fractal Koch interfaces $\partial \Omega^n$ and moving on $\overline{\Omega^n} \cup \Sigma^n= \Omega^n_\varepsilon$ where $\Sigma^n$ is a suitable neighbourhood of $\partial \Omega^n$. We study the asymptotic behaviour of the corresponding multiplicative functionals  when  thickness of $\Sigma^n$ and skewness coefficients vanish with different rates. Thus, we provide a probabilistic framework for studying diffusions across semi-permeable pre-fractal (and fractal) layers and the asymptotic analysis concerning the insulating fractal layer case.
\end{abstract}

\textbf{Keywords:} Brownian motion, Additive functionals, Boundary value problems, Fractals. 
 
\textbf{2010 AMS MSC:} 60J65, 60J55, 35J25, 28A80.

\textbf{GRANT}: P.U. Sapienza Universit\`{a} di Roma 2014.

\section{Introduction}

\paragraph*{State of the Art.} Diffusions on irregular domains have been investigated by many authors as well as the construction of reflecting Brownian motions on non smooth domains (\cite{BasHsu91, ChenPTRF93, Fuk67, FUK-book}). However, if the domain $D$ is Lipschitz, then we can construct the usual reflecting BM as in \cite{BasHsu91}. Let $D \subset \mathbb{R}^d$, $d\geq 2$, a bounded Lipschitz domain. Existence and uniqueness of the solution to $dX_t = dB_t + \mathbf{n}(X_t) dL^{\partial D}_t$ have been investigated in \cite{BasBurECP06, BasBurChe05} when $\mathbf{n}(z)$ is the inward normal vector at $z \in \partial D$ and $L^{\partial D}_t$ is the local time of $X$ on the boundary of $D$. In particular, $L^{\partial D}_t$ is a non-decreasing process such that $\int_0^\infty \mathbf{1}_{D}(X_t)dL^{\partial D}_t = 0$ that is, the process does not increase inside $D$. The local time can be associated with the surface measure (\cite{BBC, BasHsu91}) in the sense of the  Revuz correspondence. Moreover, convergence of reflecting BM in varying domain has been also investigated (see for example \cite{BurdzyChenECP98} and the references therein). In \cite{BBC} the authors studied the Robin problem on fractal domains in the framework of the so called \emph{trap domains} (see \cite{BCM}) which is a nice property to deal with for our purposes. We also deal with processes which are skew diffusions.  The skew BM has been introduced in \cite{HShep81, itoMck74, walsh78} and constructed to model permeable barrier in \cite{port79a, port79b}. An interesting surveys can be found in \cite{Lejay06}. It has been also investigated by many researchers as a tool in applied sciences. Applications to a single interface have been developed in \cite{ABTWW11, HShep81, itoMck74, lejMart06, oukn90, ramirez11, walsh78}. Recent results on multidimensional skew BM can be found in \cite{AtarBudSPA, trutnau03, trutnau05}. In \cite{lang95,weinryb84} the authors approach homogenization problems. As well described in \cite[pag. 272]{itoMck74}, it is possible to construct a reflecting BM $B^+$ on $\Omega \subset D$, a subset of $\mathbb{R}^2$, by considering a BM $B$ on $D$ and the occupation time $\mathfrak{f}$ of $B$ on $\Omega$. That is, $B(\mathfrak{f}^{-1})$ is identical in law to $B^+$.  It is also shown in \cite{itoMck74} that by killing $B(\mathfrak{f}^{-1})$ at a random time $T$ with conditional law $\mathbb{P}(T >t | B(\mathfrak{f}^{-1})) = \exp - \int \ell(\mathfrak{f}^{-1}(t), x) \kappa(dx)$, one obtains the connection with the motion driven by the Feynman-Kac generator ($\ell$ is a local time and $\kappa$ is a killing rate). An interesting connection has been also given by verifying a conjecture of Feller. Indeed, an elastic BM on $[0, \infty)$ with elastic condition $\gamma u(0) = (1-\gamma) u^\prime(0)$, $\gamma \in (0,1)$ is identical in law to $B^+$ killed according with the conditional law $\mathbb{P}(T >t | B^+) = \exp - \frac{\gamma}{1-\gamma} \ell^+(t,0)$.  We notice that the special cases $\gamma=1$ or $\gamma=0$ correspond to Dirichlet or Neumann conditions.
\paragraph*{Our results.} In this paper we consider boundary value problems on snowflake domain $\Omega$ by using the homogenization results obtained in \cite{CV0, CV-asy} with the approach of insulating layers (see, for example, \cite{AB, BCF} in smooth layers). More precisely, the fractal layer is approximated by a two-dimensional insulating thin layer $\Sigma^n$ with vanishing thickness and decreasing conductivity. Therefore, the emerging operators have discontinuous coefficients on the pre-fractal interfaces $\partial \Omega^n$ and so we consider skew Brownian motions, that is generalized diffusions processes (see, for example,  \cite{port79a,port79b} and \cite{trutnau05}). 

More precisely, the process we are dealing with is a skew planar BM on a bounded domain $\Omega^n_\varepsilon = \overline{\Omega^n} \cup \Sigma^n$ with pre-fractal interface $\partial \Omega^n$. We say that the BM in $\Omega^n_\varepsilon$ is skew meaning that it has different probability to stay in either $\overline{\Omega^n}$ or $\Omega^n_\varepsilon \setminus \Omega^n$. We have a skewness condition on the boundary $\partial \Omega^n$.  We denote by $B^{\nu,*}_t$ the skew (modified) planar BM on $\Omega^n_\varepsilon$ and we focus on the multiplicative functional $M^n_t = \mathbf{1}_{(t < {\zeta^{\Omega^n_\varepsilon}})}$ of $B^{\nu,*}_t$  where $\zeta^{\Omega^n_\varepsilon}$ is the lifetime of $B^{\nu, *}$ on $\Omega^n_\varepsilon$ and $\nu$ is the skewness parameter (see Section \ref{sec-irregular}).

In our analysis, we mainly focus on occupation measures and stopping times. A key role is played by the fact that the pre-fractal and fractal Koch domains are non trap. Thus, the fact that the semi-permeable barrier is given by the pre-fractal curve $\partial \Omega^n$ does not affect our discussion in terms of occupation measures. Let $T$ be the lifetime of the skew Brownian motion $B^{\nu, *}$ and $c_n$ be a sequence of positive constants describing the transmission condition on $\partial \Omega^n$. Under the non-restrictive assumption that $T=T_{c_n}$ (that is the lifetime depends on $c_n$) we consider the lifetime $\widehat{T_{c_n}}$  with conditional law $\mathbb{P} (\widehat{T_{c_n}}> t | B^{\nu, *}) = \exp - c_n \sigma_n \int_{\partial \Omega^n} \ell^{n}_t(y) d\mathfrak{s}$ (see Section \ref{subsection:localtime}) where $\sigma_n$ is a structural constant associated with the arc-length measure $\mathfrak{s}$ on the pre-fractal boundary. In particular, we consider a sequence of exponential random variables $\zeta^n$ with parameter $c_n \in [0, \infty]$ from which we construct a sequence of stopping times $\widehat{\zeta^{\Omega^n}}$ (see formula \eqref{def-life-sequence} below) depending on the time the process spends on (or cross) the pre-fractal interfaces. 

 Our aim is to investigate the asymptotic behaviour of $M^n_t$ when  thickness (of $\Sigma^n$) and skewness coefficients vanish with different rates according with $c_n$. We show that the limit process can be the elastic, reflecting or absorbing Brownian motion according to the asymptotic behaviour of the parameter $c_n$ (see Theorem \ref{thm-mu}). Our approach is based on the study of the asymptotic behaviour of $ \mathbf{1}_{(t < \widehat{\zeta^{\Omega^n}})}$ or equivalently $\widehat{M^n_t} = \mathbf{1}_{(t < \widehat{T_{c_n}})}$. 

Concerning the Dirichlet problem on $D\subset \mathbb{R}^d$, the connection between variational and probabilistic approach to diffusion equations with killing has been investigated for example in \cite{BaxterDalMasoMosco}. Boundary value problems with varying domains has been also investigated in \cite{ButtazzoDMasoMosco, Stollmann} where a key role is played by the capacity induced by a regular Dirichlet form.  

\paragraph*{Plan of the work.} The plan of the paper is the following. Section \ref{sec-Notation} introduces notation and definitions of the pre-fractal and fractal Koch curves. Moreover, we recall the homogenization results obtained in \cite{CV-asy}.  Section \ref{sec-PCAF} gives some basic aspects about positive continuous additive functionals and random times. In Section \ref{sec-regular} we consider skew BM across a regular layer. The skew BM across irregular boundaries is introduced in Section \ref{sec-irregular}. Our main results are collected and discussed in Section \ref{sec-results}.

\section{Notation and preliminary results}
\label{sec-Notation}

In this section we introduce the notation and  some preliminary results. We recall the definition of the Koch curve with endpoints $A = (0, 0),$ and $B = (1, 0)$. We consider the family $\Psi^\alpha=\{\psi^\alpha_1,\dots, \psi^\alpha_4\}$ of contractive similitudes  $\psi^\alpha_i:\C \rightarrow\C$, $i=1,\ldots,4$, with contraction factor $\alpha^{-1},$  $2<\alpha<4$, 
$$\aligned&\psi^\alpha_1(z)=\frac{z}{\alpha},\,\quad\quad\quad\quad\quad\quad\quad\quad\quad\quad\quad\quad
\psi^\alpha_2(z)=\frac{z}{\alpha}e^{i\theta(\alpha)}+\frac{1}{\alpha},\\
&\psi^\alpha_3(z)=\frac{z}{\alpha}e^{-
i\theta(\alpha)}+\frac12+i\sqrt{\frac{1}{\alpha}-\frac{1}{4}},\quad\quad\quad
\psi^\alpha_4(z)=\frac{z-1}{\alpha}+1,\endaligned$$
where $\theta(\alpha)=\arcsin\left( 
\frac{\sqrt{\alpha(4-\alpha)}}{2}\right).$

By the general theory of self-similar fractals (see \cite{Fal}), there exists a unique closed bounded set $K_\alpha$ which is {\it invariant\/} with respect to $\Psi^\alpha$, that is, 
\begin{equation}
K_\alpha=\cup^4_{i=1}\psi^\alpha_i(K_\alpha).
\end{equation}
We recall that $K_\alpha$ supports a unique self-similar Borel measure 
\begin{align}
\label{mu-alpha-def}
\mu_\alpha \; \textrm{which is equivalent to the $d_f-$dimensional Hausdorff measure}
\end{align}
where $d_f=\frac{\log 4}{\log \alpha}$. Let $K^0$ be the line segment of unit length that has as endpoints $A=(0,0)$ and  $B=(1,0)$. We set, for each $n$ in $\N$, 
\begin{equation}
\label{Kn}
 K_\alpha^1=\bigcup_{i=1}^4 \psi^\alpha_i(K^0),\qquad
 K_\alpha^2=\bigcup_{i=1}^4 \psi^\alpha_i(K_\alpha^1), \qquad
 \dots, \qquad
 K_\alpha^{n+1}=\bigcup_{i=1}^4 \psi^\alpha_i(K_\alpha^n);
\end{equation}
${K_\alpha^n}$ is the so-called ${n}${-th} {pre-fractal curve}. Moreover, the iterates $K_\alpha^{n}$ converge to the self-similar set $K_\alpha$ in the Hausdorff metric, when $n$ tends to infinity. Let $\Omega^0$ be the triangle  with vertices $A = (0, 0), B = (1, 0),$ and $C = (\frac12, -\frac{\sqrt{3}}{2})$. We construct on the side with endpoints $A$ and $B$ the pre-fractal Koch curve defined before, which will be denoted by  $K^n_{1,\alpha}$ and the Koch curve defined before, which will be denoted by $K_{1,\alpha}$. In a similar way, we construct on the  other sides  the analogous pre-fractal Koch curves (the Koch curves) denoting  by   $K^n_{2,\alpha}$  and $K^n_{3,\alpha}$ (by $K_{2,\alpha}$ and $K_{3,\alpha}$) the  curves with endpoints $B$ and $C$, and $C$ and $A$, respectively.  We denote by  $\Omega_\alpha^n$  the pre-fractal domain that is the set bounded by the pre-fractal Koch curves $K^n_{ j,_\alpha},$  $j=1,2,3.$ Moreover,  we denote by $\Omega_\alpha$  the snowflake that is the set  bounded by the Koch curves $K_{ j,_\alpha},$  $j=1,2,3$ (see Figure \ref{fig1}). We denote by  $\Sigma_1^0$ the \emph{open set condition} triangle of vertices  $A=(0,0)$, $B=(1,0)$ and $C=(1/2, b/2)$ where $b=\tan(\frac{\theta}{2})$.
\begin{figure}
\centering
\includegraphics[width=7cm]{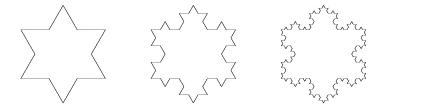}
\caption{The pre-fractal domains.}
\label{fig1}
\end{figure}

Following the construction in \cite{cap3}, for every $n$ and $\varepsilon$, we define the fiber  $\Sigma^{n}_{1,\alpha},$ \emph{$\varepsilon$-neighborhood} of $K^n_{1,\alpha}$  to be the (open) set
$$
\Sigma^{n}_{1,\alpha}=\bigcup_{i|n}\Sigma_{1,\alpha}^{i|n}, \quad \textrm{where} \quad \Sigma_{1,\alpha}^{i|n}=\psi^\alpha_{i | n}(\Sigma^0_1)$$
(see Figure \ref{fig3}). We proceed in a similar way in order to construct the fiber $\Sigma^{n}_{j,\alpha},$ \emph{$\varepsilon$-neighborhood} of $K^{n}_{j,\alpha}$ $(j=2,3)$ and, we define  the fiber $\Sigma^{n}_{\alpha}$,  \emph{$\varepsilon$-neighborhood} of $\partial \Omega^{n}$,   
$$\Sigma^{n}_\alpha=\bigcup^3_{j=1} \Sigma^{n}_{j,\alpha} \quad \textrm{and} \quad \Omega^{n}_{\varepsilon,\alpha}= \overline{\Omega^{n}_{\alpha}}\bigcup \Sigma^{n}_\alpha.$$ 
From now on, we omit  $\alpha$ when it does not give  rise   to misunderstanding, by writing simply  $\Omega$ instead of $\Omega_\alpha$ or $\mu$ instead of $\mu_\alpha$ and similar expressions. Moreover, we denote by $C$ positive, possibly different constants that do not depend on $n$  and on $\varepsilon$. We note that $$\Omega^{n}\subset\Omega^{n+1}\subset \Omega\subset{\Omega^{n+1}_{\varepsilon}}\subset {\Omega^{n}_{\varepsilon}}.$$
We define a $\emph{weight}$ $w^n$ as follows. Let $P$ -- for some $i|n$ -- belong to the boundary $\partial(\Sigma_{1}^{i|n})$ of $\Sigma_{1,\varepsilon}^{i|n}$ and let $P^{\perp}$ be the orthogonal projection of $P$ on $K_1^{i|n}$. If $x \in \mathbb{R}^2$ belongs to the segment with end-points $P$ and $P^{\perp}$, we set, in our current notation,
$$ w^n_{1}(x)= \frac{ 3|P-P^{\perp}| }{3+b^2}, $$ 
where $|P-P^{\perp}|$ is the (Euclidean) distance between $P$ and $P^{\perp}$ in $\re^2$. We proceed in a similar way in order to construct the weights $w^n_{j}$ on 
$\Sigma^{n}_{j}$ $(j=2,3)$
and we define $w^n$ on $\Omega^{n}_{\varepsilon}$
\begin{equation}\label{w(n)one}
w^n(x)= 
\begin{cases}
w^n_{j}(x)&\text{if}\quad x \in \Sigma^{n}_{j}\\
 1 &  \text{if}\quad x \in \overline{\Omega^{n}}.
 \end{cases}\end{equation}
Associated with the weight
$w^n,$ we consider the Sobolev spaces $H^1(\Omega^{n}_{\varepsilon};w^n)$
and $H^1_0(\Omega^{n}_{\varepsilon}; w^n)$, defined as  the 
completion of $C^{\infty}(\overline{\Omega^{n}_{\varepsilon}})$ and  $C^{\infty}_0(\Omega^{n}_{\varepsilon}),$ respectively,  in the norm 
\begin{equation}\label{H0}
\|u\|_{H^1(\Omega^{n}_{\varepsilon};w^n)}=\left( \int_{\Omega^{n}_{\varepsilon}} u^2 dx + \int_{\Omega^{n}_{\varepsilon}} |\nabla u|^2 w^n dx  \right)^\frac12
\end{equation}
where $dx$ denote the $2$-dimensional Lebesgue measure.
\begin{center}
\begin{figure}
\centering
\includegraphics[width=4cm]{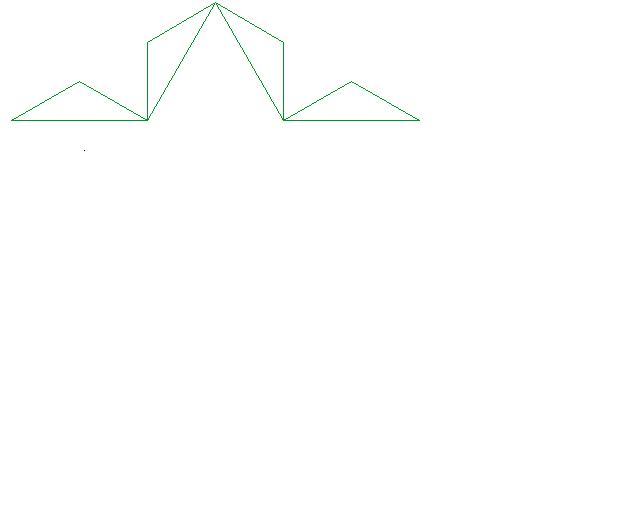}
\includegraphics[width=4cm]{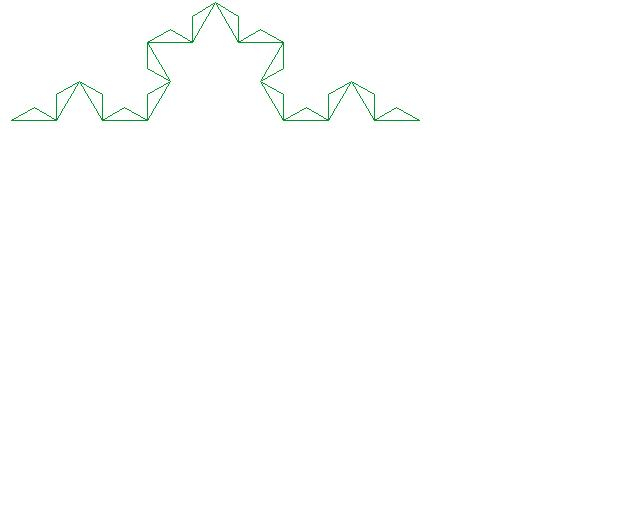}
\includegraphics[width=4cm]{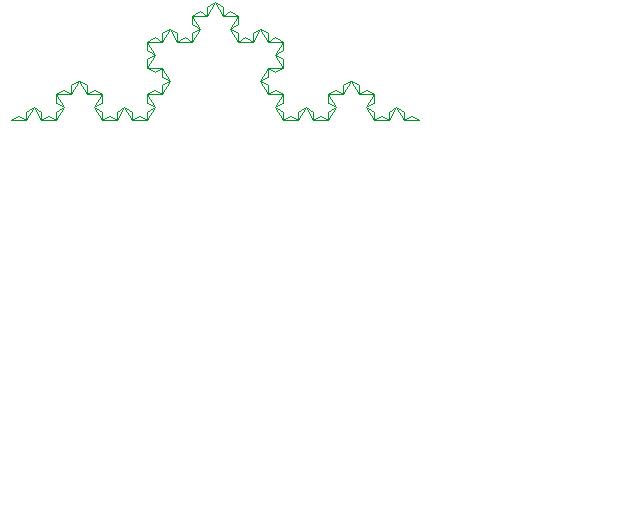}
\caption{The  fibers. }
\label{fig3}
\end{figure}
\end{center}
We define the  coefficients 
\begin{equation}\label{aq}
a^n_\varepsilon(x)= \begin{cases}
c_n\sigma_n \,w^n (x)& \text{if} \quad x \in \Sigma^{n}\\
1 & \text{if} \quad x \in\overline{\Omega^{n}},
\end{cases}
\end{equation}
where 
\begin{equation}\label{cn} 
c_n>0
\end{equation}
and
\begin{equation}\label{sq}
\sigma_n={\frac{\alpha^n}{4^{n}}}. 
\end{equation}
 
The following theorem states the existence and the uniqueness of  the  variational solution of the reinforcement problem. We consider the bilinear form associated  with the reinforcement problem 
\begin{equation}\label{Rina}
a_n(u,v):=\int_{{\Omega_{\varepsilon}^{n} }  } a^n_{\varepsilon}\nabla u\, \nabla v\, dx +\delta_n \int_{\Omega_{\varepsilon}^{n} } u\,  v\, dx 
\end{equation} 
where $a^n_{\varepsilon}$  is defined in \eqref{aq}, \eqref{sq}, \eqref{cn}, and $\delta_n>0.$

We denote by $\mathfrak{s}$ the arc-length measure on $\partial \Omega^n$.

\begin{theorem}
\label{pnm} 
Let $\sigma_n$ be as in \eqref{sq} and $d_n\in\re.$  Then,  for any $f_n\in L^2(\Omega_{\varepsilon}^{n}),$   there
exists one and only one solution $u_n$ of the following problem
\begin{equation}
\label{prefr.pb.varm}
\begin{cases}
\text{find}\;\; u_n\in 
H_0^1(\Omega_{\varepsilon}^{n}; w^n)\quad\text{such that}\\
a_n(u_n,v)=\int_{\Omega_{\varepsilon}^{n}} f_n\,v\,dx  +\sigma_n d_n\int_{\partial\Omega^{n}} \,v\,\, d\mathfrak{s}
\quad \forall \; v\in H_0^1(\Omega_{\varepsilon}^{n}; w^n),
\end{cases}
\end{equation} where $a_n(\cdot,\cdot)$  is defined in (\ref{Rina}).
Moreover, $u_n$ is   the only function that  realizes the minimum of the energy  functional
\begin{equation}\label{En}
\underset{v\in H_0^1(\Omega_{\varepsilon}^{n}; w^n)}{\min}\Big\{a_n(v,v)-2  \int_{\Omega_{\varepsilon}^{n} }  f_n\,v\,dx  -2\sigma_n d_n\int_{\partial\Omega^{n}} \,v\,\, d\mathfrak{s} \Big\}.
\end{equation}
\end{theorem}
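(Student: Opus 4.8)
The plan is to recast \eqref{prefr.pb.varm} as an abstract variational problem on the Hilbert space $H := H^1_0(\Omega_{\varepsilon}^{n}; w^n)$ and to apply the Lax--Milgram theorem, since both the existence and uniqueness assertion and the energy characterization \eqref{En} follow once $a_n(\cdot,\cdot)$ is shown to be continuous, coercive and symmetric and the right-hand side is shown to be a bounded linear functional. By construction $H$ is a Hilbert space, being the completion of $C^\infty_0(\Omega_{\varepsilon}^{n})$ in the norm \eqref{H0}.

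First I would check that the form \eqref{Rina} is continuous and coercive on $H$ for each fixed $n$. The crucial structural observation is that the coefficient \eqref{aq} and the weight entering the norm \eqref{H0} are matched: on $\overline{\Omega^{n}}$ one has $a^n_\varepsilon = w^n = 1$, while on $\Sigma^{n}$ one has $a^n_\varepsilon = c_n \sigma_n\, w^n$. Hence, splitting the gradient integral over $\overline{\Omega^{n}}$ and $\Sigma^{n}$ and applying the weighted Cauchy--Schwarz inequality with weight $w^n$,
\begin{equation*}
|a_n(u,v)| \le \max\{1, c_n\sigma_n\}\int_{\Omega_{\varepsilon}^{n}} w^n |\nabla u|\,|\nabla v|\,dx + \delta_n \int_{\Omega_{\varepsilon}^{n}} |u|\,|v|\,dx \le C_n\, \|u\|_{H}\,\|v\|_{H},
\end{equation*}
so $a_n$ is bounded. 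For coercivity the same splitting gives $\int_{\Omega_{\varepsilon}^{n}} a^n_\varepsilon |\nabla u|^2 dx \ge \min\{1, c_n\sigma_n\}\int_{\Omega_{\varepsilon}^{n}} w^n |\nabla u|^2 dx$, and since $\delta_n>0$ the zeroth-order term controls the $L^2$ part of the norm directly; thus $a_n(u,u) \ge \min\{1, c_n\sigma_n, \delta_n\}\,\|u\|_H^2$, with strictly positive constant by \eqref{cn} and \eqref{sq}. Note that the presence of $\delta_n>0$ removes any need for a Poincar\'e-type inequality here.

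Next I would verify that $F(v) := \int_{\Omega_{\varepsilon}^{n}} f_n v\,dx + \sigma_n d_n \int_{\partial\Omega^{n}} v\,d\mathfrak{s}$ is a bounded linear functional on $H$. The volume term is immediate from Cauchy--Schwarz, $|\int_{\Omega_{\varepsilon}^{n}} f_n v\,dx| \le \|f_n\|_{L^2(\Omega_{\varepsilon}^{n})}\|v\|_H$. The delicate term is the boundary integral over the pre-fractal curve $\partial\Omega^{n}$, which lies in the interior of $\Omega_{\varepsilon}^{n}$ inside the degenerating fiber $\Sigma^{n}$. Bounding $|\int_{\partial\Omega^{n}} v\,d\mathfrak{s}| \le \mathfrak{s}(\partial\Omega^{n})^{1/2}(\int_{\partial\Omega^{n}}|v|^2 d\mathfrak{s})^{1/2}$, one needs a trace estimate of the form $\sigma_n \int_{\partial\Omega^{n}}|v|^2 d\mathfrak{s} \le C\,\|v\|_H^2$ for $v \in H$; this is exactly where the structural constant $\sigma_n$ of \eqref{sq} enters, and it is the analytical heart of the argument. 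I would invoke the trace theorem for the weighted spaces $H^1(\Omega_{\varepsilon}^{n}; w^n)$ onto $\partial\Omega^{n}$ from the homogenization analysis of \cite{CV-asy} (together with the fiber construction in \cite{cap3}), so that for fixed $n$ the boundary functional is controlled by $C_n\|v\|_H$. This trace inequality, rather than the Lax--Milgram machinery, is the step I expect to be the main obstacle.

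With continuity, coercivity, and boundedness of $F$ in hand, the Lax--Milgram theorem yields a unique $u_n \in H$ solving \eqref{prefr.pb.varm}. Finally, since $a_n(\cdot,\cdot)$ is symmetric, the Dirichlet-principle equivalence applies: for any $v \in H$, writing $v = u_n + (v-u_n)$ and using symmetry together with $a_n(u_n, v-u_n)=F(v-u_n)$ gives $J(v) = J(u_n) + a_n(v-u_n, v-u_n)$, where $J(v) := a_n(v,v) - 2F(v)$. Coercivity then forces $J(v) \ge J(u_n)$ with equality if and only if $v = u_n$, which identifies $u_n$ as the unique minimizer in \eqref{En} and completes the proof.
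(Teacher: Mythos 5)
Your proof is correct, and it follows exactly the route this result rests on: the paper itself states Theorem \ref{pnm} without proof, recalling it as a known preliminary result from \cite{CV-asy}, where it is obtained by precisely this standard variational argument (Lax--Milgram for the symmetric, continuous, coercive form $a_n$ on the weighted space $H^1_0(\Omega^n_\varepsilon;w^n)$, plus the Dirichlet-principle identity $J(v)=J(u_n)+a_n(v-u_n,v-u_n)$). Your observations that $\delta_n>0$ dispenses with any Poincar\'e inequality, and that for \emph{fixed} $n$ the boundary term only needs a trace estimate with an $n$-dependent constant (the $\sigma_n$-uniform trace inequality of \cite[Theorem 8.1]{CV-asy} being needed only later, for the asymptotics), are both accurate.
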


In the following theorems, we state the  existence and  uniqueness of  the variational solution of the Robin, Neumann, and Dirichlet  problems on the  domain $\Omega$. We consider the bilinear form associated  with the Robin problem
\begin{equation}\label{Ra}
a_{c_0}(u,v):=\int_{\Omega} \nabla u\, \nabla v\, dx  +\delta_0 \int_{\Omega}
u\,  v\, dx  +c_0 \int_{\partial\Omega}
\gamma_0 u\,  \gamma_0 v\, d\mu
\end{equation}
where $\mu$ is the measure on $\partial \Omega$ that coincides, on each ${K_j}$ $j=1,2,3$, with the Hausdorff measure \eqref{mu-alpha-def} defined before  and $\gamma_0 u$ denotes the trace of  the function $u$ on the boundary of $\Omega$, that is for $v$ in $L^1_{loc}(D)$, where  $D$ is an arbitrary open set of $\re^2$,  the trace operator
$\gamma_0$ is defined as
\begin{equation}
\label{gamma0} \gamma_0 v(P):=\lim_{r\rightarrow
0}\frac{1}{m(B(P,r)\cap D)}\int_{B(P,r)\cap D}v(x)\,dx
\end{equation}
at every point $P\in \overline{D}$  where the limit exists  (see, for example,  page 15 in \cite{JonWal}). From now on,  we suppress $\gamma_0 $ in the notation,  when it does not give  rise to misunderstanding, by writing simply  $v$ instead of $\gamma_0 v$ and similar expressions. We assume that 
\begin{equation} \label{555}
c_0\geq 0,\, \delta_0 \geq 0,\, \text{and}\,\, \max(c_0,\delta_0)>0.
\end{equation}

\begin{theorem}
\label{th.1m} 
Let us assume \eqref{555} and $d\in \re.$ Then, for any $f\in L^2(\Omega),$ there exists one and only one solution $u$ of the following problem
\begin{equation}
\label{eq:5m}
\begin{cases}
 \text{find}\;\; u\in
H^1(\Omega)\quad\text{such that}\\
a_{c_0}(u,v) = \int_{\Omega} f\,v\,dx  +d \int_{\partial\Omega}
  v\, d\mu \quad\quad   \forall \; v\in H^1(\Omega) 
\end{cases}
\end{equation}
where $a_{c_0}(\cdot,\cdot)$ is defined in \eqref{Ra}. Moreover, $u$ is  the only function that  realizes the minimum of the energy  functional
 \begin{equation}\label{EE}
 \underset{v\in H^1(\Omega )}{\min}\Big\{a_{c_0}(v,v) -2  \int_{\Omega} f\,v\,dx 
-2 d \int_{\partial\Omega}  v\, d\mu\Big \}.
\end{equation}
\end{theorem}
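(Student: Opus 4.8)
The plan is to cast problem \eqref{eq:5m} as the abstract variational problem ``find $u \in H^1(\Omega)$ such that $a_{c_0}(u,v) = L(v)$ for all $v \in H^1(\Omega)$'', where $L(v) = \int_\Omega f\,v\,dx + d\int_{\partial\Omega} v\,d\mu$, and to invoke the Lax--Milgram lemma. Existence and uniqueness then reduce to verifying that $a_{c_0}$ is a bounded, coercive bilinear form on $H^1(\Omega)$ and that $L$ is a bounded linear functional on $H^1(\Omega)$.

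The key ingredient, and the step I expect to be the main obstacle, is the continuity of the trace operator $\gamma_0 \colon H^1(\Omega) \to L^2(\partial\Omega, \mu)$, since $\partial\Omega$ is a fractal curve to which the classical trace theorems do not apply. I would obtain it in two steps. First, the snowflake $\Omega$ is an $(\varepsilon,\delta)$-domain in the sense of Jones, hence admits a bounded linear extension operator $E \colon H^1(\Omega) \to H^1(\re^2)$. Second, $\partial\Omega$ is a $d$-set with $d = d_f = \log 4 / \log \alpha$, so by the Jonsson--Wallin trace theory (\cite{JonWal}) the restriction $\mathrm{Tr} \colon H^1(\re^2) \to B^{\beta}_{2,2}(\partial\Omega)$ is bounded, with $\beta = 1 - (2 - d_f)/2 = d_f/2 > \tfrac12$ (note $d_f > 1$ since $\alpha < 4$). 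Composing, $\gamma_0 = \mathrm{Tr}\circ E$ is bounded, and since $B^{\beta}_{2,2}(\partial\Omega) \hookrightarrow L^2(\partial\Omega, \mu)$ for $\beta > 0$, the trace lands continuously in $L^2(\partial\Omega,\mu)$. With this in hand, boundedness of $a_{c_0}$ and of $L$ follows from Cauchy--Schwarz: the volume terms are controlled by $\|u\|_{H^1}\|v\|_{H^1}$ and $\|f\|_{L^2}\|v\|_{H^1}$, while the boundary terms are controlled by $\|\gamma_0 u\|_{L^2(\mu)}\|\gamma_0 v\|_{L^2(\mu)}$ and $|d|\,\mu(\partial\Omega)^{1/2}\|\gamma_0 v\|_{L^2(\mu)}$, each bounded through the trace inequality.

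For coercivity I would split according to \eqref{555}. If $\delta_0 > 0$, then dropping the nonnegative boundary term gives $a_{c_0}(u,u) \geq \|\nabla u\|_{L^2}^2 + \delta_0 \|u\|_{L^2}^2 \geq \min(1,\delta_0)\,\|u\|_{H^1}^2$, so coercivity is immediate. If $\delta_0 = 0$ and $c_0 > 0$, the $L^2$-norm of $u$ is no longer directly controlled, and I would instead establish the generalized Poincar\'e inequality
\begin{equation*}
\|u\|_{L^2(\Omega)}^2 \leq C\left(\|\nabla u\|_{L^2(\Omega)}^2 + \|\gamma_0 u\|_{L^2(\partial\Omega,\mu)}^2\right), \qquad u \in H^1(\Omega),
\end{equation*}
by a compactness/contradiction argument: were it false, there would be a sequence $u_k$ with $\|u_k\|_{L^2} = 1$ and $\|\nabla u_k\|_{L^2}^2 + \|\gamma_0 u_k\|_{L^2(\mu)}^2 \to 0$; using the compact embedding $H^1(\Omega) \hookrightarrow L^2(\Omega)$ (valid since $\Omega$ is a bounded extension domain) one extracts a limit $u$ that is constant, since $\nabla u_k \to 0$, and has vanishing trace, since $\gamma_0 u_k \to 0$ and $\gamma_0$ is continuous; as $\gamma_0$ of a nonzero constant has positive $L^2(\mu)$-norm (because $\mu(\partial\Omega) > 0$), the constant is $0$, contradicting $\|u\|_{L^2} = 1$. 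The inequality then yields $a_{c_0}(u,u) \geq \min(1,c_0)\bigl(\|\nabla u\|_{L^2}^2 + \|\gamma_0 u\|_{L^2(\mu)}^2\bigr) \geq C\|u\|_{H^1}^2$.

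Finally, since $a_{c_0}$ is symmetric, bounded and coercive, the unique Lax--Milgram solution $u$ coincides with the unique minimizer over $H^1(\Omega)$ of the strictly convex, coercive, weakly lower semicontinuous functional $J(v) = a_{c_0}(v,v) - 2\int_\Omega f\,v\,dx - 2d\int_{\partial\Omega} v\,d\mu$; computing the first variation of $J$ recovers exactly the variational identity in \eqref{eq:5m}, which furnishes the energy-minimization characterization \eqref{EE}.
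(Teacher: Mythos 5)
Your proof is correct, but note that the paper itself contains no proof of Theorem \ref{th.1m}: it is recalled (together with Theorems \ref{pnm} and \ref{th.1Dm}) from the authors' earlier homogenization work \cite{CV0, CV-asy}, so there is no internal argument to compare against. Your route --- Jones extension for the $(\varepsilon,\delta)$-domain $\Omega$, the Jonsson--Wallin trace theorem on the $d$-set $\partial\Omega$ with $d_f=\log 4/\log\alpha\in(1,2)$, Lax--Milgram, and a compactness proof of the generalized Poincar\'e inequality in the case $\delta_0=0$, $c_0>0$ --- is exactly the standard machinery underlying those references; indeed the paper itself points to trace theory on fractal boundaries via \cite{JonWal} and \cite[Theorem 4.2]{CAP2}, and to trace estimates of the type \cite[Theorem 8.1]{CV-asy}. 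Two minor points you should tighten. First, the Jonsson--Wallin trace is defined through means of the extended function over full balls, whereas the paper's $\gamma_0$ in \eqref{gamma0} uses means over $B(P,r)\cap\Omega$; these coincide $\mu$-a.e.\ thanks to the measure-density property of the $(\varepsilon,\delta)$-domain $\Omega$, and this identification deserves a sentence. Second, in the contradiction argument the sequence $u_k$ converges to $u$ only weakly in $H^1(\Omega)$, so the step from $\gamma_0 u_k\to 0$ to $\gamma_0 u=0$ should invoke weak continuity of the bounded linear operator $\gamma_0$ (i.e.\ $\gamma_0 u_k \rightharpoonup \gamma_0 u$ in $L^2(\partial\Omega,\mu)$, whence the weak and strong limits agree), rather than plain continuity. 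Neither point affects validity, and your two coercivity cases exhaust assumption \eqref{555}.
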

In a similar way, we prove the following result. We consider the bilinear form associated with the Dirichlet problem and
\begin{equation}\label{RaD}
a_{\infty}(u,v):=\int_{\Omega} \nabla u\, \nabla v\, dx  +\delta_0 \int_{\Omega} u\,  v\, dx .
\end{equation}
We assume that 
\begin{equation} \label{5555} \delta_0 \geq 0.
\end{equation}

\begin{theorem}
\label{th.1Dm} Let us assume \eqref{5555}. Then, for any $f\in L^2(\Omega),$ there exists one and only one solution $u$ of the following problem
\begin{equation}
\label{eq:5mD}
\begin{cases}
 \text{find}\;\; u\in
H_0^1(\Omega)\quad\text{such that}\\
a_{\infty}(u,v) = \int_{\Omega} f\,v\,dx  \quad\quad   \forall \; v\in H_0^1(\Omega) 
\end{cases}
\end{equation}
where $a_{\infty}(\cdot,\cdot)$ is defined in \eqref{RaD}.
Moreover, $u$ is  the only function that  realizes the minimum of the energy  functional
 \begin{equation}\label{EED}\underset{v\in H_0^1(\Omega )}{\min}\Big\{a_{\infty}(v,v) -2  \int_{\Omega} f\,v\,dx \Big \}\\.
\end{equation}
\end{theorem}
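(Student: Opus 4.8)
The plan is to apply the Lax--Milgram theorem on the Hilbert space $H_0^1(\Omega)$, exactly as one does for Theorem \ref{th.1m}, but with a simplification coming from the fact that the Dirichlet condition removes the boundary term entirely. First I would check that the bilinear form $a_\infty$ defined in \eqref{RaD} is continuous: applying the Cauchy--Schwarz inequality to each of its two terms gives
\begin{equation}
|a_\infty(u,v)| \leq \|\nabla u\|_{L^2(\Omega)}\|\nabla v\|_{L^2(\Omega)} + \delta_0 \|u\|_{L^2(\Omega)}\|v\|_{L^2(\Omega)} \leq \max(1,\delta_0)\,\|u\|_{H^1(\Omega)}\|v\|_{H^1(\Omega)}.
\end{equation}
In the same way the right-hand side $v\mapsto \int_\Omega f\,v\,dx$ is a continuous linear functional on $H_0^1(\Omega)$, with $|\int_\Omega f\,v\,dx| \leq \|f\|_{L^2(\Omega)}\|v\|_{H^1(\Omega)}$.

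The heart of the argument is coercivity, and this is where the homogeneous boundary condition does the essential work. Since we work in $H_0^1(\Omega)$, the closure of $C_0^\infty(\Omega)$, every $u$ extends by zero to a function in $H_0^1(Q)$ for any open cube $Q \supset \Omega$, so the Poincar\'e inequality $\|u\|_{L^2(\Omega)}^2 \leq C_P\,\|\nabla u\|_{L^2(\Omega)}^2$ holds regardless of the irregular (fractal) nature of $\partial\Omega$. Consequently
\begin{equation}
a_\infty(u,u) \geq \|\nabla u\|_{L^2(\Omega)}^2 \geq (1+C_P)^{-1}\,\|u\|_{H^1(\Omega)}^2,
\end{equation}
which yields coercivity with a constant independent of $\delta_0$. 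This is the key point: the weak assumption \eqref{5555}, which only requires $\delta_0 \geq 0$ and in particular permits $\delta_0 = 0$, is compensated by the Dirichlet condition, in contrast to the Robin case of Theorem \ref{th.1m}, where the zeroth-order and boundary contributions are needed under \eqref{555} to obtain coercivity on all of $H^1(\Omega)$.

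With continuity, coercivity, and continuity of the functional in hand, the Lax--Milgram theorem delivers a unique $u \in H_0^1(\Omega)$ solving \eqref{eq:5mD}. Finally, since $a_\infty$ is symmetric, I would invoke the standard variational principle for symmetric coercive forms: minimizing the energy functional in \eqref{EED} over $H_0^1(\Omega)$ is equivalent to solving its Euler--Lagrange equation \eqref{eq:5mD}, so the unique weak solution $u$ is precisely the unique minimizer. I do not anticipate a serious obstacle; the only point that deserves care is the validity of the Poincar\'e inequality on the snowflake domain, which is secured by the inclusion $H_0^1(\Omega)\hookrightarrow H_0^1(Q)$ and does not rely on any regularity of the fractal boundary $\partial\Omega$.
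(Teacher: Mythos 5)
Your proof is correct and follows essentially the same route the paper intends: the paper establishes this theorem ``in a similar way'' to Theorem \ref{th.1m}, i.e.\ by the standard Lax--Milgram/variational argument for a symmetric, continuous, coercive form, which is exactly what you carry out. Your key observation --- that the Poincar\'e inequality on $H_0^1(\Omega)$, valid by zero-extension to a cube independently of the fractal nature of $\partial\Omega$, supplies coercivity even when $\delta_0=0$, which is why the weaker assumption \eqref{5555} suffices here while \eqref{555} is needed in the Robin case --- is precisely the point that makes the Dirichlet case go through.
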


We  recall the notion of $\emph{$M-$convergence}$ of functionals, introduced in \cite{MOS3}, (see also \cite{MOS1}).
\begin{definition}\label{def1}
A sequence of functionals $F^n: H \rightarrow (-\infty,+\infty]$ is said to $M-$converge to a functional $F: H \rightarrow (-\infty,+\infty]$ in a Hilbert space $H$, if
\begin{itemize}
\item[(a)] For every $u \in H$ there exists $u_n$
 converging strongly to $u$ in  $H$ such that
\beq\label{aa} \limsup F^n[u_n]\leq F[u], \quad as\quad
n\rightarrow + \infty.
 \eeq
\item[(b)] For every $v_n$ converging weakly to $u$ in $H$
\beq\label{bb} \liminf F^n[v_n]\geq F[u], \quad as\quad
n\rightarrow + \infty. 
\eeq
\end{itemize}
\end{definition}

Let $\Omega^*$ be an open regular domain  such that $\Omega^*\supset \overline{\Omega^{n}  _{\varepsilon}} ,$ for all $n:$  in order to fix  notation we choice  as  $\Omega^*$ the ball with the center in the point $P_0=(\frac{1}{2} ,- \frac{1}{2})$ and radius $1$. We consider the sequence of weighted energy functionals in $L^2(\Omega^*)$
\begin{equation}
F^n[u]=
\begin{cases}
\int_{\Omega^{n} _{\varepsilon}} a^n_\varepsilon |\nabla u|^2 dx + \delta_n \int_{\Omega^{n} _{\varepsilon}}  u^2 dx
 &\text{if} \, u|_{\Omega^{n} _{\varepsilon}}\in H^1_0(\Omega^{n}_{\varepsilon}; w^n)\\
+ \infty &\text{otherwise in }  L^2(\Omega^*)
\end{cases} \label{A(n)2}
\end{equation}
(the coefficients  $a^n_\varepsilon$ are defined in (\ref{aq}), (\ref {sq}), (\ref{cn}),  $\delta_n>0)$ and
\begin{equation}
F_{c_0}[u]=
\begin{cases}
\int_{\Omega}  |\nabla u|^2 dx + \delta_0 \int_{\Omega}  u^2 dx +c_0 \int_{\partial \Omega}
 u^2 d\mu&\text{if} \,u|_{\Omega} \in H^1(\Omega)\\
+ \infty  &\text{otherwise in }  L^2(\Omega^*).
\end{cases}
 \label{TeoA(u)22}
\end{equation}
Moreover, we consider the case where the layer is \emph{weakly insulating} (see \eqref{ISO} below) and we introduce the following  functional \eqref{Finf} in $L^2(\Omega^*)$  
\begin{eqnarray}\label{Finf}
F_\infty[u]=\left\{
\begin{array}{lll}
\int_{\Omega}  |\nabla u|^2 d x + \delta_0 \int_{\Omega}   u^2 d x  &\text{if} \quad u|_{\Omega} \in H_0^1(\Omega)\\
+ \infty \quad\quad\qquad\quad\quad\qquad\qquad &\text{otherwise in } L^2(\Omega^*). 
\end{array}
\right. 
\end{eqnarray}

In order to study the asymptotic behaviour of the  functions $u_n$, we fix the further assumptions
\begin{equation} \label{111}
f_n,\,f \in L^2(\Omega^*),\,\text{and} \,f_n\rightarrow f\,\,\in L^2(\Omega^*),\text{as}\,n\rightarrow +\infty,
\end{equation}

\begin{equation} \label{222}
\delta_n>0\,\,\text{and} \,\,\delta_n\rightarrow \delta_0\,\,\text{as}\,n\rightarrow +\infty,
\end{equation}

\begin{equation} \label{333}
c_n>0\,\,\text{and}\, \,c_n\rightarrow c_0\,\text{as}\,n\rightarrow +\infty,
\end{equation}

\begin{equation} \label{444}
d_n,d \in \re,\,\text{and} \,d_n\rightarrow d\,\text{as}\,n\rightarrow +\infty.
\end{equation}
We also introduce the following results which have been proved in \cite{CV-asy} and turn out to be useful further on.
\begin{proposition}\label{rr}  Let $\sigma_n$  be as in \eqref{sq}.
 Then, for every   sequence $g_n\in H^1({\Omega})$   weakly converging towards  $g^*$ in $H^1({\Omega}),$ we have 
\begin {equation}\label{cnH}
\sigma_n \int_{\partial \Omega^{n}} g_n d\mathfrak{s} \rightarrow \int_{\partial \Omega} g^*\,d\mu \,\,\,, \text{as}\,\,\,n \rightarrow +\infty.
\end {equation}
\end{proposition}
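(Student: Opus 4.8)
The plan is to read the two sides of \eqref{cnH} as the action of bounded linear functionals on $H^1(\Omega)$ and to reduce the statement to a uniform trace inequality together with the weak-$\ast$ convergence of the rescaled arc-length measures. Write $L_n(u)=\sigma_n\int_{\partial\Omega^n}u\,d\mathfrak{s}$ and $L(u)=\int_{\partial\Omega}u\,d\mu$. The first task is the uniform trace bound
$$
\sigma_n\int_{\partial\Omega^n}u^2\,d\mathfrak{s}\le C\,\|u\|_{H^1(\Omega)}^2,\qquad u\in H^1(\Omega),
$$
with $C$ independent of $n$; since $\sigma_n\int_{\partial\Omega^n}1\,d\mathfrak{s}=3$, Cauchy--Schwarz then shows that $\{L_n\}$ is equibounded in $(H^1(\Omega))^\ast$. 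I would prove this inequality by decomposing $\partial\Omega^n$ into the $4^n$ segments $S_{i|n}=\psi^\alpha_{i|n}(K^0)$ and the corresponding image cells $T_{i|n}=\psi^\alpha_{i|n}(T)$ of a fixed reference cell $T\supset K^0$, applying on each $T_{i|n}$ the elementary trace inequality rescaled by the contraction factor $\alpha^{-n}$, and using that $\sigma_n=(\alpha/4)^n$ from \eqref{sq} is exactly the factor renormalising the resulting sum, a scaled Poincar\'e inequality on each cell absorbing the zeroth-order term.

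For the measure convergence the key scaling remark is that the rescaled arc-length measure assigns to each segment $S_{i|n}$ the mass $\sigma_n\,\alpha^{-n}=4^{-n}$, which is precisely the $\mu$-mass $\mu(\psi^\alpha_{i|n}(K))=4^{-n}$ of the corresponding fractal cell by self-similarity of $\mu$ in \eqref{mu-alpha-def}. Since $S_{i|n}$ converges to $\psi^\alpha_{i|n}(K)$ in Hausdorff distance, this identity yields $L_n(\phi)\to L(\phi)$ for every $\phi\in C(\overline{\Omega})$. Combining this with the equiboundedness and a density argument (smooth functions are dense in $H^1(\Omega)$, while the fractal trace $H^1(\Omega)\to L^2(\partial\Omega,\mu)$ is bounded by Jonsson--Wallin, cf. \cite{JonWal}) gives $L_n(u)\to L(u)$ for each fixed $u\in H^1(\Omega)$.

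The remaining point is that \eqref{cnH} involves a varying sequence $g_n\rightharpoonup g^\ast$, and pointwise convergence of equibounded functionals does not by itself give $L_n(g_n)\to L(g^\ast)$. I would write $g_n=g^\ast+h_n$ with $h_n\rightharpoonup 0$; the term $L_n(g^\ast)\to L(g^\ast)$ is handled as above. For $L_n(h_n)$ I would use the cell decomposition to split it into an oscillation part plus an averaged part: the oscillation part is bounded by $C\,2^{-n}\|\nabla h_n\|_{L^2(\Omega)}\to 0$ via the scaled Poincar\'e inequality, while the averaged part equals $4^{-n}\sum_{i|n}\overline{h_n}^{\,T_{i|n}}$, the integral against $\mu$ of the piecewise-constant function built from the cell means of $h_n$.

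Controlling this averaged part is the main obstacle. A naive $L^2(\Omega)$ estimate of the cell means produces a divergent geometric factor $(\alpha/2)^n$ (here $\alpha>2$), so the strong $L^2(\Omega)$ convergence $h_n\to 0$ furnished by Rellich's theorem (valid since the snowflake is a $W^{1,2}$-extension domain) is not sufficient on its own. The resolution is to compare the piecewise-constant cell means with the genuine fractal trace $\gamma_0 h_n$ uniformly on the $H^1$-ball, again through the trace theory adapted to the Koch snowflake, and then to invoke the compactness of $\gamma_0\colon H^1(\Omega)\to L^2(\partial\Omega,\mu)$, which upgrades $h_n\rightharpoonup 0$ to $\gamma_0 h_n\to 0$ in $L^2(\partial\Omega,\mu)$. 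This forces $L_n(h_n)\to 0$ and hence \eqref{cnH}. Equivalently, the argument establishes the norm convergence $\|L_n-L\|_{(H^1(\Omega))^\ast}\to 0$, which for bounded linear functionals on a Hilbert space is exactly what is needed to pass weak convergence through the boundary integral.
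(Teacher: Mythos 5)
First, a point of comparison: the paper itself contains no proof of Proposition \ref{rr} --- it is imported verbatim from \cite{CV-asy}, and the only ingredient of its proof that the paper ever manipulates is the uniform trace estimate $\| v \|^2_{L^2(\partial \Omega^{n})} \leq C \sigma_n^{-1} \| v \|^2_{H^1(\mathbb{R}^2)}$ of \cite[Theorem 8.1]{CV-asy}, quoted later in the proof of Proposition \ref{prop-last}. So your proposal must stand on its own. Its architecture is sound and your scaling arithmetic is correct throughout: $\sigma_n \mathfrak{s}(S_{i|n}) = \sigma_n \alpha^{-n} = 4^{-n} = \mu(\psi^\alpha_{i|n}(K))$, which does drive $L_n(\phi) \to L(\phi)$ for continuous $\phi$; the oscillation bound $C 2^{-n} \| \nabla h_n \|_{L^2(\Omega)}$ is right; so is the divergent factor $(\alpha/2)^n$ in the naive estimate of the cell means; and reducing the whole statement to $\| L_n - L \|_{(H^1(\Omega))^\ast} \to 0$ is exactly the right target. (One simplification: once that norm convergence is available, compactness of $\gamma_0$ is superfluous --- $L(h_n) = \int_{\partial \Omega} \gamma_0 h_n \, d\mu \to 0$ already follows from $h_n \rightharpoonup 0$ and boundedness of $L$, which is Jonsson--Wallin.)

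The genuine gaps are the two trace-theoretic inputs, and they are really the same missing idea. (a) Your proof of the uniform bound $\sigma_n \int_{\partial \Omega^n} u^2 \, d\mathfrak{s} \le C \| u \|^2_{H^1(\Omega)}$ fails as described: the rescaled elementary trace inequality on a cell of diameter $\alpha^{-n}$ reads $\int_{S_{i|n}} u^2 \, d\mathfrak{s} \le C \bigl( \alpha^{-n} \int_{T_{i|n}} |\nabla u|^2 \, dx + \alpha^{n} \int_{T_{i|n}} u^2 \, dx \bigr)$, and after multiplying by $\sigma_n$ and summing over the $\sim 4^n$ cells the gradient term is harmless ($\lesssim 4^{-n} \| \nabla u \|^2_{L^2}$), but the zeroth-order term carries the factor $\sigma_n \alpha^n \cdot \alpha^n = (\alpha^2/4)^n \to \infty$ in front of $\| u \|^2_{L^2(\cup T_{i|n})}$, which neither Poincar\'e nor the smallness of the cells' total area can compensate for a general $H^1$ function (even Sobolev embeddings into $L^p$, $p<\infty$, leave a divergent power). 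Poincar\'e controls $u - \bar{u}^{T_{i|n}}$; the divergence sits exactly in the cell means $\bar{u}^{T_{i|n}}$ --- it is the quadratic twin of the $(\alpha/2)^n$ obstruction you yourself identify for the linear functional. (b) The resolution you propose for that obstruction --- comparing the piecewise-constant function of cell means with the fractal trace $\gamma_0 h_n$ \emph{uniformly on the $H^1$-ball} --- is asserted, not proved, and it is precisely where all the analytic content of the proposition lives. It requires a multi-scale argument: telescope the means over cells of consecutive generations $k = 0, \dots, n$, using $|\bar{u}^{T_{i|k+1}} - \bar{u}^{T_{i|k}}| \le C \| \nabla u \|_{L^2(\widehat{T}_{i|k})}$ and the uniform upper regularity $\sigma_n \mathfrak{s}\bigl(B(x,r) \cap \partial \Omega^n\bigr) \le C r^{d_f}$, which is the same mechanism that proves (a). As it stands, you have correctly localized the difficulty twice but discharged it both times with an unproven claim of comparable depth. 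The quickest repair is to take the uniform trace estimate from \cite[Theorem 8.1]{CV-asy}, as the paper does in Proposition \ref{prop-last}, and then carry out the remaining multi-scale comparison in (b) honestly.
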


\begin{theorem}\label{teo1m} 
Let us assume \eqref{333} and \eqref{222}. Then, the sequence of functionals $F^n_{\varepsilon(n)}$, defined in (\ref{A(n)2}), $M-$converges in $L^2(\Omega^*)$ to the functional $F_{c_0}$  defined in \eqref{TeoA(u)22} as $n\rightarrow +\infty.$
\end{theorem}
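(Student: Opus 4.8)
The plan is to verify the two conditions (a) and (b) of $M$-convergence (Definition \ref{def1}) in the Hilbert space $H=L^2(\Omega^*)$, with $F^n=F^n_{\varepsilon(n)}$ as in \eqref{A(n)2} and limit $F=F_{c_0}$ as in \eqref{TeoA(u)22}. The decisive ingredient in both directions is the asymptotic analysis of the degenerate weighted energy carried by the thin fibers $\Sigma^n$: because the conductivity in \eqref{aq} is scaled by $c_n\sigma_n$ with $\sigma_n=\alpha^n/4^n$ as in \eqref{sq}, the layer contribution should concentrate, as $n\to+\infty$, onto the surface energy $c_0\int_{\partial\Omega}u^2\,d\mu$, the bridge between the prefractal arc-length $\mathfrak{s}$ and the fractal measure $\mu$ being supplied by Proposition \ref{rr}. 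The bulk terms on $\overline{\Omega^n}$, where $a^n_\varepsilon\equiv1$, are then handled by standard weak lower semicontinuity together with the monotone exhaustion $\Omega^n\nearrow\Omega$.

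\textbf{Lower bound (b).} Let $v_n\rightharpoonup u$ weakly in $L^2(\Omega^*)$; I may assume along a subsequence that $F^n[v_n]\le C<+\infty$, so that $v_n|_{\Omega^n_\varepsilon}\in H^1_0(\Omega^n_\varepsilon;w^n)$ and the weighted energies are uniformly bounded. Since $a^n_\varepsilon\equiv1$ on $\overline{\Omega^n}$ and $\delta_n\to\delta_0\ge0$ by \eqref{222}, the norms $\|v_n\|_{H^1(\Omega^m)}$ are bounded for each fixed $m$ and all $n\ge m$; a diagonal extraction together with $v_n\to u$ in $L^2$ gives $u|_\Omega\in H^1(\Omega)$ and $\nabla v_n\rightharpoonup\nabla u$ on each $\Omega^m$, whence, letting $m\to+\infty$,
\[
\int_\Omega|\nabla u|^2\,dx\le\liminf_n\int_{\Omega^n}|\nabla v_n|^2\,dx,\qquad
\delta_0\int_\Omega u^2\,dx\le\liminf_n\delta_n\int_{\Omega^n_\varepsilon}v_n^2\,dx .
\]
For the surface term, on each fiber $\psi^\alpha_{i|n}(\Sigma^0_1)$ I would pass to the adapted coordinates in which $w^n$ is comparable to the transverse distance to $\partial\Omega^n$, and use a convex (Young) inequality to bound the fiber energy from below by a functional affine in $v_n$ whose boundary part has the form $c_n\sigma_n\int_{\partial\Omega^n}v_n\,\phi\,d\mathfrak{s}$ for a fixed transverse profile $\phi$. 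Being linear in $v_n$, this boundary functional is exactly of the type covered by Proposition \ref{rr}; summing over the $4^n$ fibers, optimizing over $\phi$, and using $c_n\to c_0$ from \eqref{333} then yields
\[
c_0\int_{\partial\Omega}u^2\,d\mu\ \le\ \liminf_n\int_{\Sigma^n}c_n\sigma_n\,w^n\,|\nabla v_n|^2\,dx .
\]
Adding the three inequalities gives $F_{c_0}[u]\le\liminf_n F^n[v_n]$.

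\textbf{Upper bound (a).} Given $u\in L^2(\Omega^*)$ I only need to treat $F_{c_0}[u]<+\infty$, i.e. $u|_\Omega\in H^1(\Omega)$; otherwise $u_n\equiv u$ works. I would fix an $H^1$ extension of $u|_\Omega$ to $\Omega^*$, keep $u_n$ equal to it on $\overline{\Omega^n}$, and on each fiber of $\Sigma^n$ define $u_n$ by interpolating in the transverse variable between the boundary trace of $u$ and the value $0$ on $\partial\Omega^n_\varepsilon$, so that $u_n|_{\Omega^n_\varepsilon}\in H^1_0(\Omega^n_\varepsilon;w^n)$, choosing the transverse profile to be asymptotically optimal for the weighted energy. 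Strong $L^2(\Omega^*)$-convergence $u_n\to u$ follows because the fibers shrink as $\varepsilon(n)\to0$; the bulk energy on $\overline{\Omega^n}$ converges to $\int_\Omega|\nabla u|^2+\delta_0\int_\Omega u^2$ by \eqref{222}; and the layer energy, again through the scaling $\sigma_n=\alpha^n/4^n$ and Proposition \ref{rr}, is engineered to tend to precisely $c_0\int_{\partial\Omega}u^2\,d\mu$, giving $\limsup_n F^n[u_n]\le F_{c_0}[u]$.

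The main obstacle is the transverse, fiber-wise analysis linking the degenerate weighted energy on $\Sigma^n$ to the surface energy on $\partial\Omega$: establishing the sharp weighted trace inequality for the lower bound and constructing the matching optimal transverse profile for the recovery sequence, with constants tracked precisely enough that the factor $c_n\sigma_n$ in \eqref{aq}--\eqref{cn} produces exactly $c_0\,d\mu$ in the limit via Proposition \ref{rr}. Once this estimate is secured on a single fiber, the self-similarity of the maps $\psi^\alpha_{i|n}$ and the summation over the $4^n$ fibers make the passage to the limit routine; these computations are carried out in \cite{CV-asy}.
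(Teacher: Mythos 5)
First, a remark on the comparison itself: the paper does not prove Theorem \ref{teo1m} at all. It is one of the results explicitly recalled from \cite{CV-asy} (``we also introduce the following results which have been proved in \cite{CV-asy}''), so the paper's ``proof'' is a citation, and your outline can only be measured against the strategy of that cited work. Your route --- verifying (a) and (b) of Definition \ref{def1}, splitting bulk and layer contributions, reducing the degenerate layer energy \eqref{aq}--\eqref{sq} to transverse one-dimensional estimates on the fibers, and using Proposition \ref{rr} as the bridge from $\sigma_n\int_{\partial\Omega^n}\cdot\,d\mathfrak{s}$ to $\int_{\partial\Omega}\cdot\,d\mu$ --- is indeed the standard and correct strategy for this kind of statement.

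As a proof, however, the proposal has genuine gaps. The most concrete one is in your lower bound: Proposition \ref{rr} applies to sequences $g_n$ converging \emph{weakly in $H^1(\Omega)$}, and your $v_n$ (hence also $v_n\phi$ for fixed smooth $\phi$) is not such a sequence. The bound $F^n[v_n]\le C$ controls $\|\nabla v_n\|_{L^2}$ only on $\Omega^n$; on $\Omega\setminus\Omega^n\subset\Sigma^n$ the gradient is weighted by $c_n\sigma_n w^n\to 0$, so $v_n$ has no uniform $H^1(\Omega)$ bound and the appeal to Proposition \ref{rr} is unjustified as written. The missing ingredient is a Jones-type extension theorem with constants independent of $n$ for the pre-fractal domains, applied to $v_n|_{\Omega^n}$, producing a sequence uniformly bounded in $H^1(\mathbb{R}^2)$ with the same traces on $\partial\Omega^n$; this is exactly \cite[Theorem A.3]{CV-asy}, which the present paper itself invokes in the proof of Proposition \ref{prop-last}. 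A second gap is the linear-versus-quadratic mismatch: your duality trick (optimizing over profiles $\phi$) linearizes the liminf side, but on the recovery-sequence side you need $\sigma_n\int_{\partial\Omega^n}u^2\,d\mathfrak{s}\to\int_{\partial\Omega}u^2\,d\mu$, and $u^2$ need not belong to $H^1(\Omega)$ for a general $u\in H^1(\Omega)$ in two dimensions (likewise, the tangential derivatives of the transverse interpolation are not controlled unless $u$ is Lipschitz); one must first prove (a) for bounded or Lipschitz $u$ and then add a truncation/density-in-energy diagonal argument, which is absent. Finally, you explicitly defer the ``main obstacle'' --- the sharp fiber-wise weighted estimate in which the specific design of $w^n$ in \eqref{w(n)one} produces exactly the constant $1$ --- to \cite{CV-asy}; since that is precisely the reference in which the theorem is proved, the essential technical content of the statement is assumed rather than established, leaving the proposal an outline of the known proof rather than a proof.
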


Now we consider the case when the  conductivity of the thin fibers vanishes slower than the  thickness of the fiber: more precisely, we suppose
\begin{equation} \label{ISO}
c_n w^n\to 0,\,\, c_n\to + \infty. 
\end{equation} 

\begin{theorem}\label{teo2}  
Let us assume \eqref{ISO} and \eqref{222}.  Then the sequence of functionals $F^n_{\varepsilon(n)}$, defined in (\ref{A(n)2}), $M-$converges in $L^2(\Omega^*)$ as $n\rightarrow +\infty$ to the energy functional $F_\infty[u]$ defined in  (\ref{Finf}).
\end{theorem}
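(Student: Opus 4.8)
The plan is to verify the two conditions (a) and (b) of Definition \ref{def1} for the pair $(F^n, F_\infty)$ in the Hilbert space $H=L^2(\Omega^*)$, handling the recovery inequality \eqref{aa} by an explicit construction and the liminf inequality \eqref{bb} by a comparison with the Robin functionals of Theorem \ref{teo1m}. For \eqref{aa} I would first dispose of the case $F_\infty[u]=+\infty$ by taking $u_n=u$, so that $F^n[u_n]=+\infty$ whenever $u|_{\Omega^n_\varepsilon}\notin H^1_0(\Omega^n_\varepsilon;w^n)$ and the inequality is trivial. When $F_\infty[u]<+\infty$ we have $u|_\Omega\in H^1_0(\Omega)$, and by density of $C^\infty_0(\Omega)$ in $H^1_0(\Omega)$ it suffices, after a diagonal (Attouch) argument, to build the recovery sequence for $\varphi\in C^\infty_0(\Omega)$. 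Since $\Omega^n\nearrow\Omega$ and $\mathrm{supp}\,\varphi$ is a compact subset of the open set $\Omega$, we have $\mathrm{supp}\,\varphi\subset\Omega^n$ for all $n$ large; I would then set $\varphi_n=\varphi$ on $\Omega^n$ and $\varphi_n=0$ on $\Sigma^n$. Because $\varphi$ vanishes near $\partial\Omega^n$, the function $\varphi_n$ is continuous across the interface, lies in $H^1_0(\Omega^n_\varepsilon;w^n)$, and carries no fiber energy, whence $F^n[\varphi_n]=\int_{\Omega^n}|\nabla\varphi|^2dx+\delta_n\int_{\Omega^n}\varphi^2dx\to\int_\Omega|\nabla\varphi|^2dx+\delta_0\int_\Omega\varphi^2dx=F_\infty[\varphi]$ by \eqref{222}. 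Note that this step does not use the full strength of \eqref{ISO}.

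For \eqref{bb} the clean route is a comparison argument that offloads the delicate fiber analysis onto Theorem \ref{teo1m}. Let $v_n\rightharpoonup u$ in $L^2(\Omega^*)$; I may assume $\liminf_n F^n[v_n]<+\infty$. Fix an arbitrary constant $c_0\in(0,+\infty)$ and denote by $G^n_{c_0}$ the functional obtained from \eqref{A(n)2} by replacing the fiber conductivity $c_n\sigma_n w^n$ with $c_0\sigma_n w^n$, that is the reinforcement functional associated with the constant sequence $\widetilde c_n\equiv c_0$. Since $c_n\to+\infty$ by \eqref{ISO}, we have $c_n\ge c_0$ for all $n$ large, so that $F^n[v]\ge G^n_{c_0}[v]$ for every $v$ and all such $n$ (both functionals share the domain $H^1_0(\Omega^n_\varepsilon;w^n)$ and the same $\delta_n$-term, and agree on $\overline{\Omega^n}$). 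The constant sequence $\widetilde c_n\equiv c_0$ satisfies \eqref{333}, so Theorem \ref{teo1m} applies and $G^n_{c_0}$ $M$-converges to the Robin functional $F_{c_0}$ of \eqref{TeoA(u)22}. Invoking property \eqref{bb} for $G^n_{c_0}$ along $v_n\rightharpoonup u$ then gives $\liminf_n F^n[v_n]\ge\liminf_n G^n_{c_0}[v_n]\ge F_{c_0}[u]$.

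Since this holds for every $c_0$, letting $c_0\to+\infty$ yields $\liminf_n F^n[v_n]\ge\sup_{c_0>0}F_{c_0}[u]$, and I would conclude by identifying this supremum with $F_\infty[u]$: if $u|_\Omega\in H^1(\Omega)$ with $\int_{\partial\Omega}u^2d\mu>0$ then $F_{c_0}[u]=\int_\Omega|\nabla u|^2dx+\delta_0\int_\Omega u^2dx+c_0\int_{\partial\Omega}u^2d\mu\to+\infty$, while if $\gamma_0u=0$ $\mu$-a.e. on $\partial\Omega$ (equivalently $u|_\Omega\in H^1_0(\Omega)$) the boundary term vanishes and $F_{c_0}[u]=F_\infty[u]$ independently of $c_0$; in all remaining cases $F_{c_0}[u]=+\infty=F_\infty[u]$. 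Hence $\sup_{c_0>0}F_{c_0}[u]=F_\infty[u]$ and \eqref{bb} follows, completing the proof.

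I expect the main obstacle to be precisely the content packaged inside Theorem \ref{teo1m}. A direct proof of \eqref{bb} would bound the fiber energy from below by a transverse Poincar\'e inequality, $\int_{\Sigma^n}a^n_\varepsilon|\nabla v_n|^2dx\ge C\,c_n\sigma_n\int_{\partial\Omega^n}v_n^2\,d\mathfrak{s}$, so that an energy bound forces $\sigma_n\int_{\partial\Omega^n}v_n^2\,d\mathfrak{s}\le C/c_n\to0$, and one would then want Proposition \ref{rr} to transfer this into $\int_{\partial\Omega}u^2d\mu=0$, i.e. $u\in H^1_0(\Omega)$. The difficulty is that under \eqref{ISO} the coefficient $c_n\sigma_n w^n=\sigma_n(c_nw^n)$ degenerates on $\Omega\setminus\overline{\Omega^n}\subset\Sigma^n$, so $\nabla v_n$ is not controlled there and one cannot directly assert the weak $H^1(\Omega)$-convergence of $v_n$ (hence of $v_n^2$) required by Proposition \ref{rr}. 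The comparison route above sidesteps this by inheriting the trace identification from the Robin case; the only remaining point, namely the characterization $u\in H^1_0(\Omega)\iff\int_{\partial\Omega}u^2d\mu=0$ in this fractal trace setting, belongs to the functional framework of \cite{CV-asy}.
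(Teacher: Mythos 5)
First, a point of fact: this paper does not prove Theorem \ref{teo2} at all. It is recalled verbatim from \cite{CV-asy} (together with Proposition \ref{rr} and Theorem \ref{teo1m}), where it is obtained by a direct verification of (a) and (b) of Definition \ref{def1}: transverse Poincar\'e-type estimates in the fibers exploiting the explicit form of the weight $w^n$, a uniform extension theorem, and Proposition \ref{rr} to pass boundary integrals to the limit. So your proposal must be judged against that direct argument, and it is a genuinely different route.

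Your structure is sound and, in my view, more economical. The recovery sequence in (a) is the standard one (smooth functions compactly supported in $\Omega$, hence in $\Omega^n$ for large $n$, extended by zero on $\Sigma^n$, plus a diagonal argument), and, as you note, uses no part of \eqref{ISO}. The novelty is (b): monotonicity in the conductivity constant gives $F^n\ge G^n_{c_0}$ for all large $n$ once $c_n\to\infty$, Theorem \ref{teo1m} legitimately applies to the constant sequence $\widetilde c_n\equiv c_0$ (condition \eqref{333} imposes no link between $c_n$ and $w^n$ or $\varepsilon(n)$, and both functionals have the same effective domain $H^1_0(\Omega^n_\varepsilon;w^n)$), and taking $\sup_{c_0>0}$ transfers the liminf inequality. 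This outsources to the Robin case exactly the fiber analysis that makes the Dirichlet case delicate, namely the degeneration of $c_n\sigma_n w^n$ on $\Omega\setminus\overline{\Omega^n}$ which, as you correctly observe, blocks a direct application of Proposition \ref{rr} to $v_n$. A by-product worth stating explicitly is that your argument never uses $c_nw^n\to 0$: only $c_n\to+\infty$ and \eqref{222} are needed, so you in fact prove a formally stronger statement.

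Two points must be shored up. The essential one: the identification $\sup_{c_0>0}F_{c_0}[u]=F_\infty[u]$ is precisely the trace characterization $H^1_0(\Omega)=\{u\in H^1(\Omega)\,:\,\gamma_0 u=0\ \mu\text{-a.e. on }\partial\Omega\}$, and after your reduction \emph{all} of the difficulty of the theorem is concentrated there. This is not a soft fact: it holds for the snowflake because $\Omega$ is an extension domain whose boundary is a $d$-set, so the trace theory of \cite{JonWal} applies (this is the form in which it enters \cite{CV-asy} and \cite{CAP2}); if it failed, your argument would only yield $\liminf_n F^n[v_n]\ge\sup_{c_0>0}F_{c_0}[u]$, which could a priori be strictly below $F_\infty[u]$. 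You flag it, but it should be invoked as a citable theorem rather than as ambient ``framework''. The minor one: \eqref{A(n)2} and \eqref{Finf} constrain only $u|_{\Omega^n_\varepsilon}$ and $u|_\Omega$, so $F_\infty[u]<\infty$ allows $u\neq 0$ on $\Omega^*\setminus\Omega$; a recovery sequence supported in $\Omega^n$ then cannot converge to such $u$ in $L^2(\Omega^*)$, so the reduction to $\varphi\in C^\infty_0(\Omega)$ is not quite sufficient as stated. The fix is immediate: take $u_n=\varphi_{k(n)}$ on $\Omega^n_\varepsilon$ and $u_n=u$ on $\Omega^*\setminus\Omega^n_\varepsilon$, which leaves $F^n[u_n]$ unchanged and restores strong $L^2(\Omega^*)$ convergence since $\mathrm{meas}(\Omega^n_\varepsilon\setminus\Omega)\to 0$.
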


In conclusion, throughout we consider the geometric constant $\sigma_n$ as in Proposition \ref{rr} and the following condition on the conductivity of the thin fibers $\Sigma^n$ 
\begin{align}
\label{general-cw}
c_n w^n \to 0 \quad \textrm{as} \; n \to \infty.
\end{align}

\section{Positive continuous additive functionals and random times}
\label{sec-PCAF}

We recall some basic aspects and introduce some notations. Let $E$ be a locally compact separable metric space and $m$ be a positive Radon measure on $E$ such that $\text{supp} [m] = E$. A Dirichlet form $\mathcal{E}$  with domain  $D(\mathcal{E})$ is a Markovian closed  symmetric form on $L^2(E,m)$  (see \cite[Chapter 1]{FUK-book}). Let $X=(\{X_t\}_{t\geq 0}; \mathfrak{F}; \mathbb{P}_x, x \in E)$ be an $m$-symmetric Hunt process whose Dirichlet form $(\mathcal{E}, D(\mathcal{E}))$ on $L^2(E,m)$ is regular (see \cite[Chapter 5]{FUK-book}).

We say that $A_t$, $t\geq 0$ is a positive continuous additive functional (PCAF) and write $A_t \in \mathbf{A}^+_c$ denoting by  $\mathbf{A}^+_c$ the totality of PCAFs of an $m$-symmetric Hunt process $X$ (see \cite[A.3.1]{chen-book} for details). More precisely, we say that $A_t \in \mathbf{A}^+_c$ if 
\begin{itemize}
\item[A.1)] $A_t$, $t\geq 0$ is $\mathcal{F}_t$-measurable ($\{\mathcal{F}_t\}$ is the minimum completed admissible filtration),
\item[A.2)] there exists a set $\Lambda \in \mathcal{F}_\infty$ and an exceptional set $N \subset E$ with $\textrm{Cap}(N)=0$ such that $\mathbb{P}_x(\Lambda) = 1$ for all $x \in E \setminus N$, $\theta_t\Lambda \subset \Lambda$ for all $t>0$; for every $\omega \in \Lambda$, $A_t (\omega) : t \mapsto A_t(\omega)$ is continuous, $A_0(\omega)=0$; for all $s,t\geq 0$ $A_{s+t}(\omega) = A_t(\omega) + A_s(\theta_t \omega)$ where $\theta_t$, $t\geq0$ is the (time) translation semigroup,
\item[A.3)] for all $\omega \in \Lambda$, $A_t(\omega) : t \mapsto A_t(\omega)$ is non-decreasing.
\end{itemize}
In this section, we denote by $\mu$ a positive Radon measure on $E$. Hereafter, we write $\langle v,u \rangle_\mu = \int_E v(x) u(x) \mu(dx)$ and, in some case, we simply write $\langle v, \mu \rangle$ with obvious meaning of the notation. We denote by $C_0$ the set of continuous functions with compact support. A positive Radon measure $\mu$ for which (\cite[pag. 74]{FUK-book})
\begin{equation}
\int |v(x)| \mu(dx) \leq C \sqrt{\mathcal{E}_1(v,v)}, \quad \forall\, v \in D(\mathcal{E}) \cap C_0(E)  \label{smoothM}
\end{equation}
where
\begin{equation}
\mathcal{E}_\lambda(u,v) = \mathcal{E}(u,v) + \lambda \langle u,v\rangle_m \label{Elambda}
\end{equation}
is said of finite energy integral and formula \eqref{smoothM} holds if and only if there exists, for each $\lambda >0$, a unique function $U_\lambda \mu \in D(\mathcal{E})$ (where $U_\lambda \mu$ is a $\lambda$-potential) such that
\begin{equation}
\mathcal{E}_\lambda(U_\lambda \mu, v)=\int v(x)\mu(dx). \label{unique-mu}
\end{equation}  
We recall that (\cite[pag. 64]{FUK-book}), for an open set $B\subset  E$ and $\mathcal{L}_B=\{v \in D(\mathcal{E})\, :\, v\geq 1\, m \mbox{-} a.e.\ on\ B\}$, the capacity is defined as $\textrm{Cap}(B) = \inf_{u \in \mathcal{L}_B} \mathcal{E}_1(u,u)$ if $\mathcal{L}_B\neq \emptyset$ and  $\textrm{Cap}(B) =\infty$ if $\mathcal{L}_B= \emptyset$. We say that a Borel measure $\mu$ on $E$ is a smooth measure  and write $\mu \in S=S(E)$ if \cite[pag. 80]{FUK-book}
\begin{itemize}
\item[$\mu$.1)] $\mu$ charges no set of zero capacity;
\item[$\mu$.2)] there exists an increasing sequence $\{F_n\}$ of closed sets such that $\mu(F_n) < \infty$ and $\textrm{Cap}(K \setminus F_n) \to 0$ for all compact sets $K$.
\end{itemize}
The class of smooth measures $S$ is therefore large and it contains all positive Radon measures charging no set of zero capacity. 
By \cite[Lemma 2.2.3]{FUK-book}, all measures of finite energy are smooth. We use the notation introduced in \cite{FUK-book} and denote by $S_0 \subset S$ the set of positive Radon measure of finite energy integrals, by $S_{00} \subset S_0$ the set of finite measures with $\|U_1\mu\|_\infty < \infty$.  \\

Let us consider $\mu_A \in S$ and $A_t \in \mathbf{A}^+_c$ associated with the $m$-symmetric Hunt process $X$ with $\mathbb{P}_m(\Lambda) = \int_E \mathbb{P}_x(\Lambda)\, m(dx)$ and  $\mathbb{P}_x(\Lambda) = \mathbb{P}_x(X_t \in \Lambda)$ for $\Lambda \in \mathfrak{F}$. Then, the measure $\mu_A$ and the PCAF $A_t$ are in the Revuz correspondence if, for any $f \in \mathcal{B}_+(E)$ (the set of non-negative and measurable functions on $E$), we have that
\begin{align}
\langle f, \mu_A \rangle = \lim_{t \downarrow 0} \frac{1}{t} \mathbb{E}_m \left[ \int_0^t f(X_s)dA_s \right] =  \lim_{\lambda \to \infty} \lambda \mathbb{E}_m \left[ \int_0^\infty e^{-\lambda t} f(X_t)dA_t \right]. \label{Rev-Corr}
\end{align}  
We say that $\mu_A$ is the Revuz measure of $A \in \mathbf{A}^+_c$ and if $\mu_A \in S$, then there exists a unique (up to equivalence) PCAF $\{A_t\}_{t\geq 0}$ with Revuz measure $\mu_A$  (\cite[Theorem 5.1.4 and Theorem 5.1.3]{FUK-book}). Throughout, we write $\mu$ instead of $\mu_A$ if no confusion arises. Moreover, we introduce
\begin{equation}
\label{ReU-def}
R_\lambda f(x) = \mathbb{E}_x\left[ \int_0^\infty e^{-\lambda t} f(X_t) dt \right]
\quad
\textrm{and}
\quad
U_A^\lambda f(x) = \mathbb{E}_x\left[ \int_0^\infty e^{-\lambda t} f(X_t) dA_t \right]
\end{equation}
(see \cite{RevYor99} for a complete discussion).

We introduce some further notation and basic aspects. In the following sections we consider the killed process 
\begin{equation}
X_t = \left\lbrace 
\begin{array}{ll}
\widetilde{X_t}, & t< \tau\\
\partial , & t \geq \tau
\end{array}
\right . \label{first-X}
\end{equation} 
($\widetilde{X_t} \in E$ and $\partial$ is the \lq\lq coffin state" not in $E$)  where $\tau$ will be a suitable random time and $\mathbf{P}_t f(x) = \mathbb{E}_x[f(X_t)] = \mathbb{E}_x[f(\widetilde{X_t})\,;\, t < \tau]$,  $x \in E$ is the associated semigroup. In particular, we consider the following cases: i) $\tau= \zeta^{E}$ is a random time such that $(\zeta^{E} < t) \equiv (L^{\partial E}_t > \zeta)$ and $\zeta$ is an exponential random variable, with parameter $c_0 \in (0, \infty)$, independent from $X$; ii) $\tau=\infty$ under suitable conditions; iii) $\tau=\tau_E$ is the exit time of $X$ from $E$. 

Thus $X_t$, $t \in [0, \infty]$, is a Markov process with state space $E_\partial := E \cup \{\partial \}$. The transition function is not conservative according with the cemetery point $\{\partial\}$, that is $\mathbb{P}_x(X_t =\partial) \geq 0$, $\forall\, x \in E_\partial$, $t\geq 0$. In particular, $X$ is conservative if $\mathbb{P}_x(\zeta^E < \infty) =0$ for every $x \in E$ where we denote by $\zeta^E$ also the lifetime of the process on $E$. Since $X_t$ is a Markov process, $\mathbb{P}_x(X_0=x)=1$ for all $x \in E_\partial$ and $\mathbb{P}_\partial (X_t = \partial)= 1$ for all $t$. Our discussion is mainly concerned with trap domains. A point $x \in E_\partial$ is called a \emph{trap} of $X$ if $\mathbb{P}_x(X_t =x) =1$ for every $t\geq0$. We give the definition of trap domain further on in the text. In i) we have introduced the local time process $L^{\partial E}_t = L^{\partial E}_t(X)$ which is the PCAF increasing when $X$ hits the boundary $\partial E$. It is well known that, the lifetime of the process follows the law $\mathbb{P}_x(\zeta^{E} > t | X_t) = e^{-c_0 L^{\partial E}_t}$ for every $x \in E$ and $t>0$. Thus, $L^{\partial E}_0=0$ and $\mathbb{P}_x(\zeta^E >0) =1$. $L^{\partial E}$ is the occupation time of $X$ on $\partial E$. For $\Lambda \subseteq E$, we denote by $\Gamma^\Lambda_t(X) = meas\{ s \in [0,t]\,: \, X_s \in \Lambda \}$ the occupation time process of $X$ on $\Lambda$. The semigroup $\mathbf{P}_t$ is strongly continuous and we use the fact that $\lambda R_\lambda f \to f$ and $\lambda \langle  U_A^\lambda f, m\rangle \to \langle f, \mu \rangle$ as $\lambda \to \infty$ where $\mu$ is the Revuz measure associated with the additive functional $A$ and therefore, to the random time $\tau \in [\tau_E, \infty]$.  In particular, if $\mathbb{P}_x(\tau=\infty)=1$, then for the planar BM $B$, $\forall\, \Lambda \subseteq \mathbb{R}^2$, $\mathbb{P}_x$-almost surely, $\overline{\Gamma^\Lambda_t} (B) = \int_0^\infty e^{-\delta s} \mathbf{1}_\Lambda(B_s) ds = \infty$  if $\delta=0$.

Let us consider the perturbed Dirichlet form on $L^2(E, m)$ written as
\begin{equation}
\mathcal{E}^\mu_\lambda(u,v) = \mathcal{E}_\lambda(u,v) + \langle u, v \rangle_{\mu}, \quad u,v \in D(\mathcal{E}) \cap L^2(E, \mu) \label{D-Form-General}
\end{equation}
where $\mathcal{E}_\lambda$ has been introduced in \eqref{Elambda}, $\mu \in S$. Let $A_t \in \mathbf{A}^+_c$ and $\widetilde{X_t}$ as in \eqref{first-X}. The  transition function
\begin{equation}
\mathbf{P}_t^\mu f(x) = \mathbb{E}_x [e^{-A_t} f(\widetilde{X_t})] \label{semig-gen-A}
\end{equation}
is associated with the regular form $(\mathcal{E}^\mu_0, D(\mathcal{E}^\mu_0))$ where $\mu$ is the Revuz measure of $A_t$  (see \cite[Theorem 6.1.1 and Theorem 6.1.2]{FUK-book}). We simply write $\mathbf{P}_t$ instead of $\mathbf{P}_t^\mu$. In the following sections we consider $m$-version of $\widetilde{X}$ associated with our problems on fractal domains (and pre-fractal if clearly specified).

We say that $X^n$ converges in law to $X$ and write $X^n \stackrel{law}{\to}X$ if $\mathbb{E}f(X^n) \to \mathbb{E}f(X)$ as $n\to \infty$ for every continuous and bounded function $f$. 

Throughout, we consider the PCAF (in the strict sense, that is, in A.2) $\Lambda$ is the defining set and $N$ is an empty set) $A^n_t =  \int_0^t f(X^n_s)ds$, the multiplicative functional $M^n_t=e^{-A^n_t}$ and a stopping time $T_n$. We have that (see \cite[Lemma 2.1]{CFY2006})
\begin{align}
\lim_{t \downarrow 0} \mathbb{E}^n_x A^n_t = & \lim_{t \downarrow 0} \big[ \mathbb{E}^n_x[ A^n_t\,;\, t < T_n] + \mathbb{E}^n_x[ A^n_t\,;\, t \geq T_n] \big] =  \lim_{t \downarrow 0} \mathbb{E}^n_x[ A^n_t\,;\, t < T_n]. \label{mean-A-pcaf}
\end{align}

\section{Transmission condition on regular interfaces}
\label{sec-regular}

In this section we consider the probabilistic approach of thin layer when $\Omega$ is a disc. Actually,  we provide a sketch of proof for the problem with collapsing annulus by following two approaches. Here, the purpose is to underline the main differences with the fractal case investigated in the next sections. Notice also that speed measure and scale function characterize uniquely one-dimensional diffusions.

\paragraph*{First approach.}
Let us consider a BM $X$ on $\mathbb{R}^2$ started (at $x \in \mathbb{R}^2$) away from zero. For $\theta^\prime \in [0, 2\pi)$, $r^\prime>0$ we can write, $\mathbb{P}_x(X_t \in dy) = \mathbb{P}_{(\theta^\prime, r^\prime)}(\Theta_t \in d\theta, R_t \in dr)$ where $R=|X|$ is a Bessel process. In particular, $R$ and $\Theta$ are the radial and the angular part of $X$. It is also well-known that a skew-product representation is given in term of $(R, \Theta)$ where $R=|X|$ is a Bessel process and $\Theta= X/|X| = B(\int R^{-2}_z dz)$ with $B$ an independent BM on the sphere $\mathbb{S}^{1}$ \cite[pag. 269]{itoMck74}. Here $\Theta$ is a time-changed BM on $\mathbb{S}^{1}$. 

Let $\nu \in (0,1)$ and $B^\nu$ be a planar BM  on the disc $C_2$ with a disc $C_1 \subset C_2$ (centred at the same point $(0,0)$, with radius $r_1 < r_2=r_1+\varepsilon$, $\varepsilon>0$), Dirichlet condition on $\partial C_2$ and transmission condition on $\partial C_1$ (the skew condition, that is $\mathbb{P}_x(B^\nu_t \in C_2 \setminus \overline{C_1})=\nu$, $\mathbb{P}_x(B^\nu_t \in C_1)=1-\nu$ for $x \in \partial C_1$). Due to the non-symmetry $(1-\nu, \nu)$ we say that $B^\nu$ is a skew planar BM (that is a 2-dimensional extension of the skew BM, see for example Section 11.10 of \cite{Lejay06} or \cite{trutnau03}).  Let $\mathcal{L}_n$ be the governing operator of $B^\nu$.  We examine in this section the classical case corresponding to the (formal) problem
\begin{eqnarray*}
\displaystyle \mathcal{L}_n u_n  & = & -f_n  \quad \textrm{on } C_2\\
\displaystyle  (1-\nu) \, \partial_\mathbf{n} u_n \big|_{\partial C_1 -} & = & \nu\, \partial_\mathbf{n} u_n \big|_{\partial C_1 +} \\
\displaystyle u_n \big|_{\partial C_1 -} & = & u_n \big|_{\partial C_1 +} \\
\displaystyle u_n \big|_{\partial C_2} & = & 0
\end{eqnarray*}
where $\partial_\mathbf{n}$ is the normal derivative and we denote by $\partial C_1 -$ and $\partial C_1 +$ the boundary from the interior and from the exterior of $C_1$. Let us consider the sequences $\nu=\nu(n)$, $\varepsilon=\varepsilon(n)$, $n \in \mathbb{N}$. Our aim is to study the asymptotic behaviour of the solution as $n\to \infty$ and $\nu, \varepsilon \to 0$, $\nu/\varepsilon \to c$ with different rate given by the elastic coefficient $c \geq 0$. Then, the problem above can be associated with $B^\nu$ started away from the origin, that is he process is partially (normally) reflected on $\partial C_1$ and totally absorbed in $\partial C_2$. 

A reflecting BM on a disc can be constructed (in law) by considering suitable time change and rotation (\cite[pag. 272]{itoMck74}). The time change in this case is a stochastic clock given by an additive functional of the radial motion as indicated before. Denote by $C_{1,2}$ the annulus $C_2 \setminus \overline{C_1}$. Thus, for $0<r^\prime <r_2$ and $0<\theta^\prime  \leq 2\pi$, $\mathbb{P}_x(B^\nu_t \in dy) = \mathbb{P}_{(\theta^\prime, r^\prime)}(\Theta_t \in d\theta, R_t^\nu \in dr)$ where $R^\nu$ is a skew Bessel process on $(0, r_2)$ such that, $\mathbb{P}_{r_1}(R_t^\nu \in (r_1, r_2)) = \nu$. We have normal reflection on $\partial C_1$ and 
\begin{align}
\forall\, x \in \partial C_1, \qquad \mathbb{P}_x(B^\nu_t \in dy) = \frac{d\theta}{2\pi} \mathbb{P}_{r_1}(R_t^\nu \in dr). \label{unif-2pi}
\end{align}
The BM can move from $\partial C_1$ according with an uniformly distributed angle $\Theta$ for the choice of the starting point, that is
\begin{align}
\int_0^{2\pi} \mathbb{P}_{(\theta, r_1)}(B^\nu_t \in C_{1,2}) \frac{d\theta}{2\pi}= \nu \quad \textrm{and} \quad  \int_0^{2\pi} \mathbb{P}_{(\theta, r_1)}(B^\nu_t \in C_1) \frac{d\theta}{2\pi} = 1-\nu. \label{unif-2pi-2}
\end{align}

Let $R$ be the part of the Bessel process $\widetilde{R}$ on $(0, r_2)$ with $\widetilde{R} \in (0, \infty)$. We cut the excursions of $\widetilde{R}$ by considering a time change given by the inverse of $\Gamma_t^{(r_1, r_2)}(\widetilde{R})$. We do the same with $\Gamma_t^{(0,r_1)}(\widetilde{R})$. As in \cite[pag. 115]{itoMck74} we can obtain a skew motion by considering the $\nu$ portion of $\Gamma_t^{(r_1, r_2)}(\widetilde{R})$ and the $1-\nu$ portion of $\Gamma_t^{(0,r_1)}(\widetilde{R})$, that is a new occupation time, say $\mathfrak{f}$. Thus, it is possible to consider a suitable time change $\mathfrak{f}^{-1}$, in order to obtain partial (normal) reflection on $r_1$ and,  $R^\nu=R_{\mathfrak{f}^{-1}}$ is a Bessel process on $(0, r_2)$ with transmission condition on $r_1$. The skew BM constructed in this way has the skew-product representation involving the time-changed Bessel process $R_{\mathfrak{f}^{-1}}$ where the BM on the circle is identical in law to the original process (that is, $\Theta \stackrel{law}{=} \Theta_{\mathfrak{f}^{-1}}$). More precisely, let us consider $T^\nu_t = \int_0^{\mathfrak{f}^{-1}(t)} (R^\nu_z)^{-2} dz$ where $R^\nu_t = R_{\mathfrak{f}^{-1}(t)}$ and $T_t=\int_0^t R^{-2}_z dz$. Then $\Theta^\nu_t = B(T^\nu_t)$ where $B$ is independent from $T^\nu_t$ and $\Theta_t = B(T_t)$ where $B$ is independent from $T_t$. Since $T^\nu_t \stackrel{law}{=} T_t$ we get that $\Theta^\nu_t \stackrel{law}{=} \Theta_t$.

Thus, the only process we consider is the radial part $R_t$ time-changed by $\mathfrak{f}^{-1}$, that is $R^\nu$. The Bessel process can start from zero and then it is instantaneously reflected. It never hits the origin   at some $t>0$. The mean exit time 
\begin{align*}
v_\varepsilon(r) = \mathbb{E}[\tau_{(0, r_2)}(R^\nu) | R^\nu_0 =r \in (0,r_2)] = \mathbb{E}_r \tau_{C_2}
\end{align*}  
can be explicitly written by following standard techniques for one-dimensional diffusions (see for example \cite{KarlinTaylor}) and, as $\varepsilon \to 0$, $\nu \to 0$ according with $\nu/\epsilon \to c$, we find that it solves
\begin{align*}
\begin{array}{ll}
\displaystyle v_0^{\prime \prime} = -1 & \\
\displaystyle v_0(0) = 0 & \\
\displaystyle v_0(r_1) = 0 & \textrm{if } c=\infty\\
\displaystyle v_0^\prime(r_1) = -c \,v_0(r_1) & \textrm{if } c \in [0, \infty). 
\end{array} 
\end{align*}
This corresponds to the study of $u_n$ with $f_n = \mathbf{1}$. Due to isotropy and the discussion about the angular part of the planar BM, we arrive at the solution $u_\infty$ of the problem above. Therefore, the boundary conditions on $r_1$ depend on the limit of the ratio between   the skewness coefficient $\nu$ and the thickness coefficient $\varepsilon$. According with Section \ref{sec-Notation}, we note that $\sigma_n=1/2\pi$, $w^n=\varepsilon(n)$ and $c_n w^n=\nu(n)$, $\Sigma^n = C_{1,2}$ is the thin layer.

\paragraph*{Second approach.}
Alternatively, we can approach the problem as follows. Let $T_{c_n}$ be the stopping time for the skew BM on $C_2$ with $r_2=r_1+\varepsilon(n)$ and $\nu=\nu(n)$ under the assumption that $\lim_{n\to \infty} c_n = \lim_{n\to \infty} \nu(n)/\varepsilon(n)$. The lifetime depends on the asymptotic behaviour of the process on the collapsing annulus $C_{1,2}$.  Our result in fractal domains can be reformulated here (in regular domains) by considering the stopping time $T_{c_n}$ and the fact that $(T_{c_n} > t | B^\nu) \equiv (T_{c_n} > t | R^\nu)$  in view of the previous discussion. In particular we consider the lifetime $\zeta^{C_2}=T_{c_n}$ of $R^\nu$ and $\widehat{\zeta^{C_2}}= \tau_{C_2} \wedge \widehat{T_{c_n}}$  where $\widehat{T_{c_n}} = \inf\{s >0\, :\, L_s^{r_1} > \zeta^n \}$ with conditional law $\mathbb{P}_x(\widehat{T_{c_n}} > t | R^\nu) = \exp - c_n L^{r_1}_t$ where $L^{r_1}_t$ is the symmetric local time of $R^\nu$ at $r_1$. That is, we consider $\zeta^n$ as exponential random variable with parameter $c_n$ and independent from $R^\nu$. Thus, under the assumption that $\lim_{n\to \infty} c_n = \lim_{n\to \infty} \nu(n)/\varepsilon(n)$, we study the asymptotic behaviour of 
\begin{align*}
u_n(r) = \mathbb{E}_r\left[ \int_0^{T_{c_n}} f_n(\widetilde{R^\nu_t} )dt \right] = \mathbb{E}_r\left[ \int_0^{\infty} f_n(R^\nu_t) M^n_t dt \right]
\end{align*}
where $M^n_t = \mathbf{1}_{(t < T_{c_n})}$ by means of the asymptotic behaviour of
\begin{align*}
\widehat{u_n}(r) = \mathbb{E}_r \left[ \int_0^\infty f_n(R^\nu_t) \widehat{M^n_t} dt  \right]
\end{align*}
where $\widehat{M^n_t} = \mathbf{1}_{(t < \widehat{T_{c_n}})}$ and (assume here $x \in C_1$ for the reader's convenience)
\begin{align*}
\mathbb{P}_x(\widehat{T_{c_n}} > t | R^\nu)  \stackrel{law}{\to} \left\lbrace \begin{array}{ll}
\displaystyle \mathbf{1},  & c_n \to 0,\\
\displaystyle \exp - c_0 L^{r_1}_t, & c_n \to c_0 \in (0, \infty),\\
\displaystyle \mathbf{1}_{(t<\tau_{C_1})}, & c_n \to \infty,
\end{array} \right . \quad \textrm{as } n\to \infty.
\end{align*}
For the local times we have that $L^{r_1}_t(R^{\nu(n)}) \to L^{r_1}_t(R^+)$ in law where $R^+$ is a reflecting Bessel process on $(0, r_1)$. Thus, we estimate the stopping time $T$ by $\widehat{T}$ and exploit the fact that $\widehat{\zeta^{C_2}} \leq \zeta^{C_2}$ with probability one. This immediately follows by considering the definition of $\widehat{\zeta^{C_2}}$ which can be also written as $\widehat{\zeta^{C_2}} = \inf\{s \in(0, \zeta^{C_2}]\, :\, L_s^{r_1} > \zeta^n \}$. The convergence of $R^{\nu(n)}$ can be obtained by considering that $\mathbb{P}_r(R^{\nu(n)}_t > M) \leq M^{-1} \mathbb{E} R^{\nu(n)}_t$ and that the moment is bounded.

\begin{remark}
\label{rmk-Cap}
For a compact subset $K \in \mathbb{R}^d$ (\cite[Theorem 22.7]{RogWill1})
\begin{align*}
\mathbb{P}_x (B_t \in K \textrm{ for some } t>0) = \int G(x,y)\mu_K(dy) =G \mu_K(x)
\end{align*}
is a potential of a unique measure $\mu_K$ concentrated on $\partial K$. The capacity $\textrm{Cap}(K) = \inf \{\mathcal{E}(\mu)\,:\, G\mu \geq 1 \textrm{ on } K \}$ where $\mathcal{E}(\mu) = \int G(x,y)\mu(dx)\mu(dy)$ can be defined from $\mu_K(K)$.

Define $\sigma_K = \sup\{s >0\,;\, B_s \in K\}$ with $\sup \emptyset = 0$, then for $x\in \mathbb{R}^d$, $y \in K$, $t>0$, we have that (\cite{Chung, GetoorSharpe})
\begin{align}
\mathbb{P}_x(B_{\sigma_K} \in dy, \sigma_K \in dt) = p(t,x,y) \mu_K(dy) dt \label{hitting-desity}
\end{align}
and we recover an interesting connection between elastic coefficient and capacity. Consider $K=\overline{C_1}$: the last exit time can be therefore rewritten as $\sigma_K= \inf\{ s >0: L^{r_1}_s > \zeta^n\}$ where now $\zeta^n$ is the time the process spends on (or cross) $r_1$ before absorption in $r_2$.
\end{remark}

\begin{remark}
Notice that we used isotropy and skew product representation which are not suitable tools for approaching our fractal problem. In particular, if we consider the Koch domain $\Omega$, the normal vector does not exist at almost all boundary points.  However it is possible to define the Robin boundary condition  in the sense of the dual of certain Besov spaces (see \cite[Theorem 4.2 ]{CAP2}).
\end{remark}

\section{Transmission conditions on irregular interfaces}
\label{sec-irregular}

In this section we introduce the modified skew BM $B^{\nu,*}_t$, $t\geq 0$ on $\Omega^n_\varepsilon$. The parameter $\nu \in [0,1]$ is the so called skewness parameter.  Skew BM is a process with associated Dirichlet form in $L^2(\Omega^n_{\varepsilon}, \mathfrak{m}_\nu)$ given by
\begin{equation}
\mathcal{E}(u,v) = \frac{1}{2}\int_{\Omega^n_{\varepsilon}} \nabla u \, \nabla v \,d\mathfrak{m}_\nu, \quad D(\mathcal{E}) = H^1(\Omega^n_{\varepsilon}, \mathfrak{m}_\nu) \label{DirFormSkew}
\end{equation}
where $\mathfrak{m}_\nu(x) = 2(1-\nu) \mathbf{1}_{\Omega^n}(x) + 2\nu \mathbf{1}_{\Sigma^n}(x)$ and it can be associated with discontinuous diffusion coefficients. We focus on the sequence of elliptic operators 
\begin{equation}
L_n\, u= - \div \left( a^n_\varepsilon \,\nabla u \right) \label{div-gen}
\end{equation}
in divergence form with coefficients given in \eqref{aq} and
\begin{align*}
D(L_n)= \left\lbrace u \in L^2(\Omega^n_\varepsilon, dx),\, :\;    u|_{\Omega^n}\in H^2(\Omega^n), \quad   u|_{\Sigma^n}\in H^2(\Sigma^n) \right\rbrace.
\end{align*} 
The discontinuous coefficients $a^n_\varepsilon$ in \eqref{div-gen} introduce the transmission condition in the $L^2 (\partial \Omega^n) $
\begin{equation}
 \nabla u \cdot \mathbf{n} \big|_{y-} = c_n \sigma_n \, \nabla u \cdot \mathbf{n} \big|_{y+} \quad \forall\, y \in \partial \Omega^n \label{transm-cond}
\end{equation}
(where $ \mathbf{ n}$ is the outer normal to $ \Omega^n$, $y^-= y \in \overline{\Omega^n} \cap \partial \Omega^n$ and $y^+ = y \in \overline{\Sigma^n} \cap \partial \Omega^n$, we recall that $w^n|_{\partial \Omega^n}=1$) and therefore, the corresponding diffusion behaves like a skew BM.  For a given $n$, the operator \eqref{div-gen} can be regarded as the governing operator of the planar skew BM $\widetilde{B^\nu} = (\{\widetilde{B^\nu_t}\}_{t \geq 0}; \mathfrak{F}^\nu; \mathbb{P}^n_x, x \in \Omega^n_\varepsilon )$ on $\mathbb{R}^2$ from which we define the killed process $B^\nu$. Let $\mathcal{L}_n$ be the governing operator of $B^\nu$ on $L^2(\Omega^n_\varepsilon, dx)$ with 
\begin{align*}
\mathcal{D}(\mathcal{L}_n) =   \bigg\{& u \in L^2(\Omega^n_\varepsilon, dx), \quad   u|_{\Omega^n}\in H^2(\Omega^n), \quad   u|_{\Sigma^n}\in H^2(\Sigma^n), \\
 & :\,  u\big|_{\partial \Omega^n_\varepsilon} = 0, \quad  u \textrm{ is continuous on $\partial \Omega^n$ and satisfies \eqref{transm-cond}} \bigg\}.
\end{align*}
Then, the transition function $\mathbf{P}^n_t f(x) = \mathbb{E}^n_x[f(B^\nu_t)] = \mathbb{E}^n_x[f(\widetilde{B^\nu_t});\, t< \tau_{\Omega^n_\varepsilon}]$ with transition kernel $p^\nu$ where $\nu$ depends on the coefficients $a^n_\varepsilon$ and therefore, on \eqref{transm-cond}, is governed by
\begin{equation}
\displaystyle \frac{\partial u}{\partial t} = \mathcal{L}_n\, u  \quad \textrm{on} \quad \Omega^n_\varepsilon  \label{pdeSBM}
\end{equation}
and $\mathcal{L}_n f:= \frac{1}{2} L_n f$, $f \in \mathcal{D}(\mathcal{L}_n)$. The parabolic equation \eqref{pdeSBM} can be rewritten by considering the infinitesimal generator $\widetilde{\mathcal{L}}_n:=\frac{1}{2}\Delta$ on $L^2(\widetilde{\mathfrak{m}}_\nu)$ with $\mathcal{D}(\widetilde{\mathcal{L}}_n) = \mathcal{D}(\mathcal{L}_n)$ (see  \cite[pag 356]{chen-book} for details) where
\begin{equation}
\widetilde{\mathfrak{m}}_\nu(x) = \mathbf{1}_{\Omega^n}(x) + c_n \sigma_n w^n\, \mathbf{1}_{\Sigma^n}(x). \label{sp02}
\end{equation}
From the transition kernel $p^\nu$ we can write
\begin{equation}
\mathbb{P}^n_x(\widetilde{B^\nu_t} \in \Lambda , t < \tau_{\Omega^n_\varepsilon}) = \int_\Lambda p^\nu(t, x,y) \,  dy \quad x \in \Omega^n_\varepsilon \label{PdiSBM}
\end{equation}
for some Borel set $\Lambda \in \mathfrak{F}^\nu$ with the (first) exit time 
\begin{equation}
\tau_{\Omega^n_\varepsilon} = \inf \{s > 0\,:\, \widetilde{B^\nu_s} \notin \Omega^n_\varepsilon \} \label{exit-time-1}.
\end{equation}

We refer to $B^\nu$ as a modified skew BM in the sense that it depends on both the skewness coefficient $\nu$ (that is the BM is skew) and the weight $w^n$ given in \eqref{w(n)one} (that is, the skew BM is modified). The process $B^\nu$ represents a Brownian diffusion of a particle with transmission condition \eqref{transm-cond} on the pre-fractal $\partial \Omega^n$. 
The BM is partially reflected when it hits $\partial \Omega^n$:  that is, $\forall\, x \in \partial \Omega^n$ the process starting from $x$ moves toward $\Omega^n$ or $\Sigma^n$ with probability $1-\nu$ or $\nu$ respectively by taking into account the structural constant $\sigma_n$. In particular, according with \eqref{unif-2pi-2}, 
\begin{align}
\sigma_n \int_{\partial \Omega^n} \mathbb{P}^n_{x}( B_t^\nu  \in \Sigma^n ) d\mathfrak{s} = \nu \quad \textrm{and} \quad  \sigma_n \int_{\partial \Omega^n} \mathbb{P}^n_{x}( B_t^\nu  \in \Omega^n ) d\mathfrak{s} = 1-\nu.
\label{saythat-nu}
\end{align}
Let $\nu=\nu(n)$ be a sequence such that $\nu(n) \to 0$ as $n\to \infty$. Heuristically, \eqref{saythat-nu} and \eqref{transm-cond} say that
\begin{align*}
\frac{a^{n+}_\varepsilon}{ a^{n-}_\varepsilon + a^{n+}_\varepsilon} \frac{1}{\nu(n) \sigma_n} \to 1 \qquad \textrm{uniformly on $\partial \Omega^n$ as } n\to \infty
\end{align*}
Since condition \eqref{general-cw} holds true, from the construction we present here, it must be that $\nu(n)/c_n w^n \to 1$ on $\Sigma^n$ as $n \to \infty$. Equivalently, 
\begin{equation}
\frac{\nu(n)}{1-\nu(n)} \frac{1}{c_n w^n} \to 1 \qquad \textrm{uniformly on $\Sigma^n$ as } n\to \infty. \label{cond-nu}
\end{equation}
In view of \eqref{cond-nu}, we also refer to $\nu$ as transmission parameter. However, due to the fact that $w^n|_{\partial \Omega^n}=1$, we must pay particular attention on the pre-fractal boundary.\\

We follow the characterization of trap domain given in \cite{BBC,BCM}. Consider an open connected set $D \subset \mathbb{R}^d$, $d\geq 2$ with finite volume and the reflected BM $B^+$ on $\overline{D}$. Let $\mathcal{B} \subset D$ be an open ball with non-zero radius and denote by $\tau_{\partial \mathcal{B}} = \inf \{s\geq 0\,:\, B^+_s \in \partial \mathcal{B} \}$ the hitting time of the reflecting BM $B^+ \in D \setminus \mathcal{B}$. 
\begin{definition}
The set $D$ is a trap domain if
\begin{equation}
\sup_{x \in D \setminus \mathcal{B}} \, \mathbb{E}_x \, \tau_{\partial \mathcal{B}} = \infty. \label{trap-cond-P}
\end{equation}
Otherwise, $D$ is a non-trap domain.
\end{definition}
Notice that the definition above does not depend on the choice of $\mathcal{B}$ (\cite[Lemma 3.3]{BCM}). In both Lipschitz domains $\Omega^n$ and $\Sigma^n$ the process $B^\nu$ behaves like a BM $B^+$ reflecting on $\partial \Omega^n$. As shown in \cite{BBC, BCM}, the pre-fractal and fractal Koch domains are \emph{non-trap}. Then, $\forall\, n$, $\Omega^n$ and $\Sigma^n$ are non trap for $B^\nu$. Condition \eqref{trap-cond-P} can be rewritten in analytic way as follows
\begin{equation*}
\sup_{x \in D \setminus \mathcal{B}} \int_{D \setminus \mathcal{B}} G^+(x,y)dy =\infty
\end{equation*}
where $G^+$ is the Green function of $B^+$ on $D$ and $D=\Omega^n$ or $D=\Sigma^n$.

The process $B^\nu$ on $\Omega^n_\varepsilon$ is a transient BM for which $\mathbf{P}^n_t \mathbf{1}_{\Omega^n_\varepsilon}(x) = \mathbb{P}_x(\tau_{\Omega^n_\varepsilon} >t)$ and $\mathbb{E}_x \tau_{\Omega^n_\varepsilon} = \int \mathbf{P}^n_t \mathbf{1}_{\Omega^n_\varepsilon}(x)dt<\infty$. Nevertheless, we are looking for asymptotic results concerning also a non transient limit process. Thus, for the skew BM $B_t^\nu,\, t < \tau_{\Omega^n_\varepsilon}$, we introduce the Green function $G^\nu_n(x,y) = \int_0^\infty e^{-\delta_n t} p^\nu(t,x,y)dt$ for which we write
\begin{equation}
G^\nu_n f(x) = \int G^\nu_n(x,y) \, f(y)\, dy = \mathbb{E}^n_x \left[ \int_0^{\tau_{\Omega^n_\varepsilon}} e^{-\delta_n t}f(\widetilde{B^\nu_t}) dt \right] \label{green}
\end{equation}
where $\mathbb{E}_x^n$ is the expectation under \eqref{PdiSBM}. Furthermore, we write $G^\nu_nf(x) = \int G_n(x,y) \, f(y)\, \mathfrak{m}_\nu(y) dy$ where $G_n(\cdot, \cdot) = \int_0^\infty p(t, \cdot, \cdot)dt$ is the  Green function of a BM $B$ on $\Omega^n_\varepsilon$.

Let us consider the following representation 
\begin{equation}
\widetilde{\widetilde{B^\nu_t}} := \left\lbrace \begin{array}{lll}
B^w_t & \textrm{on }\; \mathbb{R}^2 \setminus \overline{\Omega^n} & \textrm{with probability } \nu\\
B_t & \textrm{on }\; \Omega^n & \textrm{with probability } 1- \nu
\end{array} \right .
\label{Bnu-rep}
\end{equation} 
where $B^w$ and $B$ are independent Brownian motions with $\mathbb{P}_x(B^w_t \in \mathbb{R}^2\setminus \overline{\Omega^n})=1$ and $\mathbb{P}_x(B_t \in \Omega^n)=1$ and depending on $w^n$ with $w^n \neq 1$ only outside $\overline{\Omega^n}$. Thus, $\widetilde{B^\nu_t} \stackrel{law}{=} \widetilde{\widetilde{B^\nu_t}}$, that is $\widetilde{B^\nu_t}$ equals $B^w_t$ with probability $\nu$ and $B_t$ with probability $1-\nu$; in this case, notice also that, $B$ is a reflected BM on $\Omega^n$  and $B^w$ is a BM reflected on $\partial \Omega^n$. 

We now introduce the process $\widetilde{B^{\nu, *}_t}$ which is the $m$-symmetric extension of $\widetilde{\widetilde{B^\nu_t}}$ to $(\mathbb{R}^2 \setminus \Omega^n) \cup \overline{\Omega^n}$ (see \cite[Remark 1.1]{trutnau05}, \cite[Definition 7.5.8 and Definition 7.7.1]{chen-book}). To be precise, we say that $\widetilde{B^{\nu, *}_t}$ is an $m$-symmetric extension meaning that 
\begin{equation*}
m(\partial \Omega^n) =0
\end{equation*}
where $m$ is the $2$-dimensional Lebesgue measure. The process $B^{\nu,*}$ is the part process of $\widetilde{B^{\nu,*}_t}$ on $\Omega^n_\varepsilon$ where $\widetilde{B^{\nu,*}_t}$ equals $B^{*}$ on $\overline{\Omega^n}$ (the $m$-extension of $B$ on $\overline{\Omega^n}$) and $\widetilde{B^{\nu,*}_t}$ equals $B^{w,*}$ on $\mathbb{R}^2 \setminus \Omega^n$ (the $m$-extension of $B^w$ on $\mathbb{R}^2 \setminus \Omega^n$) according with representation \eqref{Bnu-rep}.

Let us introduce the following measures on $\Omega^n_\varepsilon$
\begin{equation}
m^n_\varepsilon(dx) = \mathbf{1}_{\Omega^n \cup \Sigma^n}(x) \, dx + \mathbf{1}_{\partial \Omega^n}(x) \, d\mathfrak{s} \label{sp1}
\end{equation}
and
\begin{equation}
\mathfrak{m}^n_\varepsilon(dx) =  \mathfrak{m}_{\nu(n)}(x)\, dx + 2\nu(n) \sigma_n \mathbf{1}_{\partial \Omega^n}(x)\,d\mathfrak{s}  \label{sp2}
\end{equation}
where the measure on the pre-fractal curve is taken according with Proposition \ref{rr}. Notice that $\mathfrak{m}_\nu$ is related to $\widetilde{\mathfrak{m}}_\nu$ by means of \eqref{transm-cond}, \eqref{saythat-nu}, \eqref{cond-nu}. Thus, we write \eqref{PdiSBM} as $\mathbb{P}^n_x(B^\nu_t \in \Lambda)$ and 
\begin{equation}
\mathbb{P}^n_{{m}^n_\varepsilon}(B^{\nu, *}_t \in \Lambda) = \int_{\Omega^*} \mathbb{P}^n_x (B^{\nu, *}_t \in \Lambda)\, {m}^n_\varepsilon(dx) .
\end{equation}

\subsection{Local time and Occupation measure}
\label{subsection:localtime}
The skew BM is a Markov process with continuous paths (and discontinuous local time). The boundary local time is a PCAF defined as an occupation time process on the boundary (see \cite{chan, DPRZ} for example). Moreover, we deal with a modified skew BM depending on the weights $w^n$. For a given $n$, we introduce the occupation density $\ell^n_t(x)$, $x \in \Omega^*$, $t\geq 0$ such that, for $\Lambda \in \Omega^*$, the following occupation formula holds true
\begin{align}
\int_0^{t \wedge \tau_{\Omega^n_\varepsilon}} \mathbf{1}_{\Lambda}(\widetilde{B^{\nu,*}_s})ds = \int_0^t \mathbf{1}_{\Lambda}(B^{\nu,*}_s)ds = \int_\Lambda \ell^{n}_t(y; B^{\nu,*})\, m^n_\varepsilon(dy)= \int_\Lambda \ell^{n}_{t \wedge \tau_{\Omega^n_\varepsilon}}(y; \widetilde{B^{\nu,*}})\, m^n_\varepsilon(dy) \label{first-loc-def}
\end{align}
where $m^n_\varepsilon$ is the measure \eqref{sp1}. With some abuse of notation we do not distinguish here between absolutely continuity of the occupation density on $\Lambda \subset \Omega^n_\varepsilon$ or $\Lambda \subset \partial \Omega^n$. For the sake of simplicity we use the same symbol $\ell^n_t$ for a density w.r.t. $m^n_\varepsilon$.  In particular, with \eqref{Rev-Corr} in mind, as $t \to 0$ we have that
\begin{equation}
\begin{array}{l}
\displaystyle \frac{1}{t} \mathbb{E}^n_{m^n_\varepsilon} \left[ \int_0^t f(B^{\nu,*}_s)d\Gamma^{\Lambda}_s \right] \to \int_{\Omega^n_\varepsilon} f(x) \mathbf{1}_\Lambda (x) dx, \\
\displaystyle \frac{1}{t} \mathbb{E}^n_{m^n_\varepsilon} \left[ \int_0^t f(B^{\nu,*}_s) dL^{\Lambda}_s \right] \to \sigma_n \int_{\partial \Omega^n} f(x)\mathbf{1}_\Lambda (x) d\mathfrak{s}.
\end{array}
\label{Rev-Corr-2}
\end{equation} 
The occupation density $\ell^n_t(y; B^{\nu,*})$ must be discontinuous on $\partial \Omega^n$ (and continuous on $\Omega^n \cup \Sigma^n$ with different \lq\lq speed'' measures depending on $w^n$; recall that $w^n|_{\overline{\Omega^n}}=1$). In particular, 
$$\ell^n_t(y; B^{\nu}) dy =  \ell^n_t(y; X) \mathfrak{m}_\nu(dy)$$ 
where the process $X$ behaves like the BM \eqref{Bnu-rep} on $\Omega^n_\varepsilon$ with $\nu=1/2$ (that is, there is no reflection on $\partial \Omega^n$ for $X$). The occupation density on the boundary can be therefore written by considering the "right" (reflection from the exterior, $\Sigma^n$) and "left" ( reflection from the interior, $\Omega^n$) densities.  The symmetric local time
\begin{align}
\int_0^t \mathbf{1}_{\partial \Omega^n}(B^{\nu,*}_s) ds = L^{\partial \Omega^n}_t(B^{\nu,*}) = (L^{\partial \Omega^n-}_t + L^{\partial \Omega^n+}_t)/2 =  \sigma_n \int_{\partial \Omega^n} \ell^n_t(y; B^{\nu,*})\,  m^n_\varepsilon(dy)  \label{localT-corresp}
\end{align} 
is written in terms of $L^{\partial \Omega^n-}_t =L^{\partial \Omega^n-}_t(B^{\nu,*})$ and $L^{\partial \Omega^n+}_t=L^{\partial \Omega^n+}_t(B^{\nu,*})$, say "left" and "right" local time. In particular, $\ell^{n+}_t(y; B^{\nu,*}) = 2\nu(n) \sigma_n  \ell^n_t(y; B^{\nu,*})$ and $\ell^{n-}_t(y; B^{\nu,*}) = 2(1-\nu(n)) \sigma_n \ell^n_t(y; B^{\nu,*})$. Recall that we are dealing with the BM $B^{\nu, *}$ such that   $B^{\nu, *}=B^{*}$ on $\overline{\Omega^n}$ and $B^{\nu, *}=B^{w,*}$ on $\overline{\Sigma^n}$ with probability respectively given by $1-\nu$ and $\nu$ as in \eqref{Bnu-rep}. We have that $L^{\partial \Omega^n -}_t (B^{\nu,*})=L^{\partial \Omega^n}_t(B^*)$ and $L^{\partial \Omega^n +}_t (B^{\nu,*})=L^{\partial \Omega^n}_t(B^{w,*})$ according with \eqref{Bnu-rep} and \eqref{localT-corresp}, that is 
\begin{equation}
L^{\partial \Omega^n -}_t (B^{\nu,*}) = 2(1-\nu) L^{\partial \Omega^n}_t(B^{\nu,*}) \quad \textrm{ and } \quad  L^{\partial \Omega^n +}_t (B^{\nu,*}) = 2\nu L^{\partial \Omega^n}_t(B^{\nu,*}) \label{def-loc2}
\end{equation}
where $L^{\partial \Omega^n}_t(B^{\nu,*})$ is a symmetric local time (independent from $\nu$). Since $t \mapsto L^{\partial \Omega^n}_t(B^{\nu,*})$ is a continuous additive functional, formulas in \eqref{def-loc2} define PCAFs. Indeed, for $\eta>0$, $\eta L \in \mathbf{A}^+_c$ iff $L \in \mathbf{A}^+_c$ (\cite[Proposition VI.45.10]{RogWill2}).  The representations \eqref{def-loc2} can be obtained by considering excursions of $\widetilde{B^{\nu,*}}$ and suitable time changes for example in the case of regular interfaces as in Section 4. According with \eqref{cond-nu}, for the sequence of probabilities $\nu(n)$, it holds that 
\begin{equation}
\frac{1}{\nu(n)} \frac{c_n  w^n}{1+ c_n w^n }  \to 1 \quad \textrm{uniformly on $\Sigma^n$ as }  n \to \infty.   \label{nu-n}
\end{equation}
Observe that we always have $c_n w^n \to 0$ (as $n\to \infty$) as basic assumption between conductivity and thickness of the fiber, the insulating fractal layer case. We use the fact that, for any $f \in \mathcal{B}_+$,
\begin{align}
\int_\Lambda f(y) \ell^{n+}_t(y; B^{\nu,*}) m^n_\varepsilon(dy)  = & 2\nu(n) \sigma_n \int_\Lambda f(y) \ell^n_t(y; B^{\nu,*})\, m^n_\varepsilon(dy) \notag\\
= & \int_\Lambda f(y) \ell^n_t(y; B^{\nu,*})\, \mathfrak{m}^n_\varepsilon(dy), \quad \textrm{if} \quad \Lambda \subseteq \partial \Omega^n \label{ell-view}
\end{align}
and
\begin{align}
\int_\Lambda f(y) \ell^{n}_t(y; B^{\nu,*}) m^n_\varepsilon(dy)  = 2\nu(n) \int_\Lambda f(y) \ell^n_t(y; B^{w})\, m^n_\varepsilon(dy), \quad \textrm{if} \quad \Lambda \subseteq \Sigma^n  \label{ell-view-2} 
\end{align}
under $\mathbb{E}^n_x$. Formulas \eqref{ell-view} and \eqref{ell-view-2} can be also obtained by considering \eqref{sp2} together with representation \eqref{Bnu-rep} and by following similar arguments as in \cite{BluGet64}. Indeed, for $0 < t_1 < t_2 < \tau_{\Omega^n_\varepsilon}$, and $\Lambda = \textrm{supp}[\mu] $ where $\mu$ is the Revuz measure of $A_t$, we have that
\begin{align}
\mathbb{E}^n_x \left[ \int_{t_1}^{t_2} f(B^{\nu,*}_s) dA_s \right] = & \int_{t_1}^{t_2} ds \int_{\Omega^*} f(y) \,p^{\nu,*}(s,x,y) \, \mu(dy).  \label{ext-func-add-int}
\end{align}
If $A_t=L^{\Lambda+}_t$, then 
\begin{align}
\mathbb{E}^n_x \left[ \int_{0}^{t} f(B^{\nu,*}_s) dA_s \right] = & \int_0^t ds \int_{\Omega^*} f(y) \, p^{w,*}(s,x,y)\, \mu(y) \mathfrak{m}^n_\varepsilon(dy)\notag \\
= & 2\nu(n) \sigma_n \int_{0}^{t} ds \int_{\Lambda} f(y) \, p^{w,*}(s,x,y)\,m^n_\varepsilon(dy) \label{mean-A}
\end{align}
where $p^{w,*}(s,x,y)$ is the transition kernel of $B^{\nu,*}$ on $\Omega^n_\varepsilon \setminus \Omega^n$ and formula \eqref{ell-view} follows by \eqref{first-loc-def} and \eqref{localT-corresp}. Notice that for $\Lambda \subseteq \Sigma^n$ (that is, $A_t =\Gamma^\Lambda_t$), the integral \eqref{ell-view-2} vanishes as $n \to \infty$. 

For the Neumann heat kernel $p_N$ in an inner uniform domain, it holds that (\cite{GCoste-book}) 
\begin{equation}
c_1 t^{-1} e^{-\frac{d^2(x,y)}{c_2 t}} \leq p_N(t,x,y) \leq c_3 t^{-1} e^{-\frac{d^2(x,y)}{c_4 t}}. \label{Gbound}
\end{equation}
In view of \eqref{ext-func-add-int} and the Gaussian bound \eqref{Gbound}, there exists $C=C(t_1, t_2)>0$ such that 
\begin{equation}
\mathbb{E}^n_x \left[ \int_{t_1}^{t_2} f(B^{\nu,*}_s) dA_s \right] \leq C \int_{\Omega^*} f(y) \, \mu(dy).  \label{up-bound}
\end{equation}

It is known that the reflecting BM $B^+$ spends zero Lebesgue amount of time on the boundary $\partial \Omega$. On the other hand, we are interested in $\Gamma^{\Omega}_t$ obtained as a limit of $\Gamma^{\Omega^n_\varepsilon \setminus \Omega^n}_t$ for some $t<T$. Moreover, we focus on $L^{\partial \Omega^n+}$ and $L^{\partial \Omega^n-}$ or equivalently on $\Gamma^{\Omega^n_\varepsilon \setminus \Omega^n}_t$ and $\Gamma^{\Omega^n_\varepsilon \setminus \Sigma^n}_t$ in our analysis. In order to streamline the notation as much as possible we write $\ell^n_t$ in place of $\ell^n_t(B^{\nu, *})$ and $\ell_t$ instead of $\ell_t^\infty$ when no confusion arises.

\subsection{The probabilistic framework}

Here the aim is to provide a suitable framework to start with in the next section. We formalize some link between the previous sections and Brownian motions on trap domains, in particular on a domain with Koch interfaces. Hereafter, we assume that $d_n=0$ and $d=0$ without loss of generality. The problem in Theorem \ref{pnm} can be formulated as follows.
\begin{theorem}
The unique weak solution  of problem   \eqref{prefr.pb.varm} can be written as
\begin{align}
u_n(x) = \mathbb{E}^n_x \left[ \int_0^{\tau_{\Omega^n_\epsilon}} e^{-t \delta_n} f_n(\widetilde{B^{\nu(n), *}_t}) dt \right].  \label{sol-n}
\end{align}
\end{theorem}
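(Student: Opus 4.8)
The plan is to identify the solution operator $f_n \mapsto u_n$ of \eqref{prefr.pb.varm} with the $\delta_n$-Green (resolvent) operator $G^\nu_n$ of the killed skew Brownian motion, and then read off \eqref{sol-n} from the Feynman--Kac representation \eqref{green} already recorded above. Since we have set $d_n=0$, the weak problem reduces to finding $u_n \in H^1_0(\Omega^n_\varepsilon; w^n)$ with $\int_{\Omega^n_\varepsilon} a^n_\varepsilon \nabla u_n \cdot \nabla v\, dx + \delta_n \int_{\Omega^n_\varepsilon} u_n v\, dx = \int_{\Omega^n_\varepsilon} f_n v\, dx$ for all admissible $v$. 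The first step is to recognize the gradient part of this form as the Dirichlet form \eqref{DirFormSkew} of the skew diffusion $\widetilde{B^\nu}$: on $\Omega^n_\varepsilon$ the coefficient $a^n_\varepsilon$ coincides with the density of the symmetrizing measure $\widetilde{\mathfrak{m}}_\nu$ of \eqref{sp02} with respect to Lebesgue measure, so that $\int a^n_\varepsilon \nabla u \cdot \nabla v\, dx$ is, up to the normalization $\frac{1}{2}$ in \eqref{DirFormSkew}, the form $\mathcal{E}(u,v)$ generating $\widetilde{B^\nu}$.

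Next I would pin down the boundary data. The space $H^1_0(\Omega^n_\varepsilon; w^n)$ encodes the homogeneous Dirichlet condition on the outer boundary $\partial \Omega^n_\varepsilon$, which by the theory of part processes of regular Dirichlet forms (\cite[Ch.~4 and Ch.~6]{FUK-book}, \cite{chen-book}) corresponds to killing $\widetilde{B^\nu}$ at the first exit time $\tau_{\Omega^n_\varepsilon}$ of \eqref{exit-time-1}; this is exactly the part process used to define $B^\nu$. The internal interface condition is not an extra constraint to impose: the transmission condition \eqref{transm-cond} is precisely the one induced by the discontinuous coefficients $a^n_\varepsilon$ and is already built into the domain $\mathcal{D}(\mathcal{L}_n)$, i.e. it is the skewness quantified by \eqref{saythat-nu}--\eqref{cond-nu}. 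Consequently the self-adjoint operator attached to the gradient part of $a_n$, with Dirichlet data on $\partial \Omega^n_\varepsilon$ and transmission across $\partial \Omega^n$, is the generator $\mathcal{L}_n$ of the killed process, and the variational problem is the weak form of the resolvent equation $(\delta_n - \mathcal{L}_n)u_n = f_n$.

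With this identification the conclusion is immediate: the solution is $u_n = G^\nu_n f_n$, and \eqref{green} gives $G^\nu_n f_n(x) = \mathbb{E}^n_x[\int_0^{\tau_{\Omega^n_\varepsilon}} e^{-\delta_n t} f_n(\widetilde{B^\nu_t})\, dt]$, which is \eqref{sol-n} — here $\widetilde{B^{\nu,*}}$ and $\widetilde{B^\nu}$ agree in law up to $\tau_{\Omega^n_\varepsilon}$, since the $m$-symmetric extension is invisible before the first exit. To make this rigorous rather than formal I would verify directly that $G^\nu_n f_n \in H^1_0(\Omega^n_\varepsilon; w^n)$ and satisfies the weak equation, testing against $v$ and using the strong continuity of the semigroup together with $\lambda R_\lambda \to \mathrm{id}$ and the Revuz correspondence \eqref{Rev-Corr}, and then invoke the uniqueness already granted by Theorem \ref{pnm} to conclude $u_n = G^\nu_n f_n$.

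The main obstacle is the form--process correspondence on the non-smooth geometry: one must check that $a_n$ (without the $\delta_n$ term) is a regular Dirichlet form with the correct reference measure and that the skew diffusion with transmission \eqref{transm-cond} is exactly the Hunt process it generates, and that the homogeneous Dirichlet condition on the irregular (Lipschitz pre-fractal) outer boundary genuinely corresponds to killing at $\tau_{\Omega^n_\varepsilon}$. A secondary but real difficulty is the bookkeeping of normalization constants — the factor $\frac{1}{2}$ in \eqref{DirFormSkew} and the fact that the zeroth-order term and right-hand side of \eqref{prefr.pb.varm} are taken against Lebesgue measure while $\widetilde{B^\nu}$ is symmetric with respect to $\widetilde{\mathfrak{m}}_\nu$ — which is precisely the reconciliation performed by the two expressions for $G^\nu_n$ in \eqref{green}.
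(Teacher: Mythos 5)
Your proposal is correct and takes essentially the same route as the paper: the paper gives no detailed proof, but its surrounding discussion makes precisely your identification --- the bilinear form \eqref{Rina} is (equivalently to \eqref{DirFormSkew}) the Dirichlet form of the modified skew BM killed at $\tau_{\Omega^n_\varepsilon}$, the Dirichlet condition being encoded by the killing measure $\infty_{(\Omega^n_\varepsilon)^c}$ (i.e.\ the part process), so that $u_n$ is the resolvent $R^n_0 f_n$ of \eqref{Res-n}, which is \eqref{sol-n}. Your closing verification step (checking that $G^\nu_n f_n$ solves the weak formulation and invoking uniqueness from Theorem \ref{pnm}) and your explicit flagging of the normalization and reference-measure bookkeeping are, if anything, more careful than the paper's own implicit treatment.
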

The associated Dirichlet form on $H^1_0(\Omega^n_\varepsilon)$ is given by \eqref{DirFormSkew} or equivalently by \eqref{Rina}. The perturbed form $(\mathcal{E}^{\mu_{A^n}}_0, D(\mathcal{E}^{\mu_{A^n}}_0))$ is obtained by considering the Revuz measure of the additive functional $A^n_t$ associated with the killing time $\tau_{\Omega^n_\varepsilon}$. Let $\infty_{D^c}$ be the measure which is $+\infty$ on the complement $D^c$ of a Borel set $D$. Formula \eqref{semig-gen-A} becomes
\begin{equation}
\mathbf{P}^n_tf(x) = \mathbb{E}^n_x [f(B^{\nu, *}_t)] = \mathbb{E}^n_x [e^{- \overline{A^n_t}}f(\widetilde{B^{\nu,*}_t})] = \mathbb{E}^n_x [e^{-\delta_n t}f(\widetilde{B^{\nu,*}_t})\,;\, t < \tau_{\Omega^n_\varepsilon}] \label{semig-gen-A2}
\end{equation}
where $\overline{A^n_t} = \delta_n\, t   + A^n_t$ is a PCAF with drift $\delta_n$ ($\delta_n \geq 0$) and associated Revuz measure which can be written as $\mu_{\overline{A^n}}(dx) = \delta_n dx + \infty_{D^c}$ and $D=\Omega^n_\varepsilon$. The resolvent kernel is written as follows
\begin{align}
R^n_\lambda f(x) = \mathbb{E}^n_x \left[ \int_0^{\tau_{\Omega^n_\varepsilon}} e^{-\lambda t - \delta_n t} f(\widetilde{B^{\nu,*}_t}) dt \right] = \mathbb{E}^n_x \left[ \int_0^\infty e^{-\lambda t - \overline{A^n_t}} f(\widetilde{B^{\nu,*}_t}) dt\right]. \label{Res-n}
\end{align}
For the sake of simplicity we consider $\delta_n=0$ (if not otherwise specified). The case $\delta_n>0$ can be immediately obtained by considering $\overline{A^n_t}$ with $\mu_{\overline{A^n}}(dx) = \mu_{A^n}(dx) + \delta_n\, dx$ and following similar arguments. We rewrite \eqref{D-Form-General} by considering that the semigroup  \eqref{semig-gen-A2} generates the Dirichlet form on $L^2(\Omega^n_\varepsilon)$ given by
\begin{align}
\mathcal{E}^{\mu_{A^n}}_0(u,v) = \widetilde{a_n}(u,v) + \langle u,v\rangle_{\mu_{A^n}}, \quad u,v \in H^1(\mathbb{R}^2)  \cap L^2(\mu_{A^n}) \label{formEvera}
\end{align}
where
\begin{align}
\label{a-tilde}
\widetilde{a_n}(u,v) : = (1-\nu(n)) \int_{\Omega^n} \nabla u\, \nabla v\, dx +  \nu(n) \int_{\mathbb{R}^2 \setminus \Omega^n} \nabla u\, \nabla v\, dx.
\end{align}
Observe that the part process of $\widetilde{B^{\nu,*}_t}$ on $\Omega^n_\varepsilon$ is transient if and only if $\textrm{Cap}(\mathbb{R}^2 \setminus \Omega^n_\varepsilon)>0$ (\cite[Proposition 3.5.10]{chen-book}). The lifetime is finite and the process is killed. The representation \eqref{semig-gen-A2} says also that for our initial problem \eqref{prefr.pb.varm} it can be given a variational formulation as in \cite{ButtazzoDMasoMosco} by considering the measure
\begin{equation}
\label{mu-cap}
\infty_{(\Omega^n_\varepsilon)^c} (B) = \left\lbrace 
\begin{array}{ll}
\displaystyle +\infty,  & \textrm{if Cap}_1(B \cap (\mathbb{R}^2 \setminus \Omega^n_\varepsilon)) >0,\\ 
\displaystyle 0, & \textrm{otherwise}
\end{array}
\right . .
\end{equation}
Thus, the Dirichlet condition is prescribed in the capacity sense and the modified BM moves on $\mathbb{R}^2$.\\

We continue with the following representation of the solution in Theorem \ref{th.1m}.
\begin{theorem}
The unique weak solution of \eqref{eq:5m} can be written as
\begin{equation}
u(x) = \mathbb{E}_x \left[ \int_0^\infty   e^{- t \delta_0 -c_0 L_t^{\partial \Omega}} f(B^+_t)  \, dt \right] \label{sol-c0}
\end{equation}
where $B^+=(\{B^+_t\}_{t\geq 0}; \mathfrak{F}^+; \mathbb{P}_x, x \in \overline{\Omega})$ is a reflecting BM on $\overline{\Omega}$ and $L^{\partial \Omega}_t=L^{\partial \Omega}_t(B^+)$ is the local time on the boundary $\partial \Omega$.
\end{theorem}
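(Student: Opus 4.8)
The plan is to read \eqref{sol-c0} as a Feynman--Kac identity and to recognise its right-hand side as the $0$-resolvent of a killed reflecting diffusion whose Dirichlet form is exactly $a_{c_0}$ of \eqref{Ra}. First I would fix the reflecting BM $B^+$ on $\overline{\Omega}$: since the Koch domain is a (non-trap) inner uniform domain, $B^+$ is well defined (cf. \cite{BBC,BCM}), and I normalise it so that its regular Dirichlet form on $L^2(\Omega,dx)$ is the Dirichlet integral $\mathcal{E}(u,v)=\int_\Omega \nabla u\,\nabla v\,dx$ with domain $H^1(\Omega)$. This is precisely the principal part of $a_{c_0}$.

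Next I would build the killing exponent. Set $\overline{A_t}=\delta_0\,t+c_0\,L^{\partial\Omega}_t$ with $L^{\partial\Omega}_t=L^{\partial\Omega}_t(B^+)$. The term $\delta_0\,t$ is a PCAF with Revuz measure $\delta_0\,\mathbf{1}_\Omega\,dx$, accounting for the zero-order term $\delta_0\int_\Omega uv\,dx$. The crucial object is $L^{\partial\Omega}_t$: by the Revuz correspondence \eqref{Rev-Corr} --- compare the boundary line of \eqref{Rev-Corr-2} together with Proposition \ref{rr} --- its Revuz measure is the fractal surface measure $\mu$ on $\partial\Omega$, so that $\mu_{\overline{A}}(dx)=\delta_0\,\mathbf{1}_\Omega\,dx+c_0\,\mu$. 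I would then invoke the perturbation theory of Dirichlet forms, \cite[Theorem 6.1.1 and Theorem 6.1.2]{FUK-book}: the semigroup $\mathbb{E}_x[e^{-\overline{A_t}}f(B^+_t)]$ of \eqref{semig-gen-A} is generated by the perturbed form $\mathcal{E}^{\mu}_0(u,v)=\mathcal{E}(u,v)+\langle u,v\rangle_{\mu_{\overline{A}}}$ of \eqref{D-Form-General}, which written out is
\[
\int_\Omega \nabla u\,\nabla v\,dx+\delta_0\int_\Omega uv\,dx+c_0\int_{\partial\Omega}\gamma_0 u\,\gamma_0 v\,d\mu=a_{c_0}(u,v).
\]

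It then remains to evaluate the $0$-resolvent. By \eqref{ReU-def} with $\lambda=0$ the right-hand side of \eqref{sol-c0} is $R_0 f(x)=\mathbb{E}_x[\int_0^\infty e^{-\overline{A_t}}f(B^+_t)\,dt]$, and the resolvent of the perturbed semigroup is characterised, exactly as in \eqref{unique-mu}, by $\mathcal{E}^{\mu}_0(R_0 f,v)=\langle f,v\rangle$ for every $v\in H^1(\Omega)$. With $d=0$ this is precisely the variational equation \eqref{eq:5m}, so the uniqueness part of Theorem \ref{th.1m} forces $u=R_0 f$, that is \eqref{sol-c0}. When $\delta_0=0$ one has to check beforehand that $R_0 f$ is finite, i.e. that the elastic killing renders the process transient; this follows from $c_0>0$ together with $\mathrm{Cap}(\partial\Omega)>0$, since the boundary local time of $B^+$ grows unboundedly.

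The step I expect to be the main obstacle is the justification that the fractal surface measure $\mu$ (the $d_f$-dimensional Hausdorff measure on $\partial\Omega$) is smooth --- indeed of finite energy integral, $\mu\in S_0$ --- for the Dirichlet form of $B^+$, and that the PCAF it produces genuinely is the boundary local time. Unlike the regular disc treated in Section \ref{sec-regular}, $\partial\Omega$ admits no classical normal derivative, so the identity $\langle u,v\rangle_\mu=c_0\int_{\partial\Omega}\gamma_0 u\,\gamma_0 v\,d\mu$ must be read through trace theory on Besov/Sobolev spaces: one needs the trace $\gamma_0:H^1(\Omega)\to L^2(\partial\Omega,\mu)$ of \eqref{gamma0} to be well defined and bounded (cf. \cite{JonWal,CAP2}), which is where the self-similar structure and the dimensional bound $d_f=\log 4/\log\alpha$ enter, and where Proposition \ref{rr} realises $\mu$ as the limit of the rescaled arc-length measures $\sigma_n\,\mathfrak{s}$ on the prefractals $\partial\Omega^n$.
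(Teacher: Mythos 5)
Your proposal is correct and takes essentially the same route as the paper: both arguments identify the Feynman--Kac semigroup $\mathbb{E}_x\left[e^{-\delta_0 t - c_0 L^{\partial\Omega}_t} f(B^+_t)\right]$ as the semigroup generated by the perturbed Dirichlet form \eqref{Ra} (the paper phrases the killing through an independent exponential variable $\zeta$ and the crossing time $\zeta^{\Omega}=\inf\{s\geq 0: L^{\partial\Omega}_s \notin [0,\zeta]\}$, then conditions on $\mathfrak{F}^+$ to arrive at the same exponential functional), and then read \eqref{sol-c0} as the $0$-resolvent, which by the uniqueness in Theorem \ref{th.1m} must coincide with the variational solution of \eqref{eq:5m}. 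The extra checks you supply --- the Revuz identification of $L^{\partial\Omega}$ with $c_0\mu$, finiteness of $R_0 f$ when $\delta_0=0$, and the trace-theoretic smoothness of $\mu$ (cf. \cite{BBC, JonWal, CAP2}) --- are exactly the points the paper leaves implicit.
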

The associated Dirichlet form, say $\mathcal{E}_0^{\mu_{A}}$, is therefore given by \eqref{Ra} with $D(\mathcal{E}^{\mu_{A}}_0) = H^1(\Omega) \cap L^2(c_0 \mu_\alpha)$. The solution \eqref{sol-c0} is obtained by considering the exponential random variable $\zeta$ with parameter $c_0>0$ (independent from $B^+$) and $\zeta^{\Omega} = \inf \{ s \geq 0\,:\, L^{\partial \Omega}_s \notin [0, \zeta] \}$. Thus, the associated semigroup is written as
\begin{align*}
\mathbf{P}^+_{t}f(x) = & \mathbb{E}_x \left[e^{-\delta_0 t}f(B_t^+); t < \zeta^{\Omega}  \right] =  \mathbb{E}_x \left[e^{-\delta_0 t} f(B_t^+); \zeta > L^{\partial \Omega}_t  \right]\\
 = & \mathbb{E}_x \left[ e^{-\delta_0 t} f(B_t^+) \, \mathbb{E}[\zeta > L^{\partial \Omega}_t \big| \mathfrak{F}^+]  \right] =  \mathbb{E}_x \left[ f(B_t^+) \, e^{- \delta_0 t -  c_0 L^{\partial \Omega}_t}  \right].
\end{align*}
Let $A_t = \Gamma^\Lambda_t(B)$ and $A^{-1}_t = \inf\{ s\geq 0\,:\, A_s \notin [0,t] \}$. Since $A_t$ is a non-decreasing process, $(A^{-1}_t < s) \equiv (A_s > t)$ and we say that $A^{-1}$ is the inverse of $A$. Obviously we have that $(\zeta^{\Omega} > t) \equiv (L^{\partial \Omega}_t < \zeta)$. It is worth mentioning that $B^+_t$ can not be written (for all $t>0$) as $B_{(A_t)^{-1}}$. We can not consider the skew product representation as in Section \ref{sec-regular} or in the recent paper \cite{take13} for instance. The reflecting BM has been investigated by many researchers and some different constructions have been also considered. Nevertheless, some technical problems can arise from the characterization of the domains. Here we consider a domain with fractal boundary and in particular, we exploit the fact that our pre-fractal and fractal Koch domains are non trap. This permits us to consider occupation measures even if the fractal nature of the boundary does not allow the study of the corresponding time changed processes. Theorem \ref{th.1Dm} can be formulated as follows.
\begin{theorem}
The unique weak solution of problem   \eqref{eq:5mD} can be written as
\begin{equation}
u(x) = \mathbb{E}_x \left[ \int_0^{\tau_\Omega} e^{-t \delta_0} f(\widetilde{B_t})dt \right] \label{sol-D}
\end{equation}
where $\tau_\Omega$ is the first time the BM $\widetilde{B}$ hits the boundary $\partial \Omega$.
\end{theorem}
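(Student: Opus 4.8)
The plan is to exploit that existence and uniqueness for \eqref{eq:5mD} are already granted by Theorem \ref{th.1Dm} via Lax--Milgram, so it suffices to verify that the candidate $u(x) = \mathbb{E}_x[\int_0^{\tau_\Omega} e^{-t\delta_0} f(\widetilde{B_t})\,dt]$ belongs to $H_0^1(\Omega)$ and satisfies the weak identity $a_\infty(u,v) = \int_\Omega f v\,dx$ for every $v \in H_0^1(\Omega)$; by uniqueness this candidate must then coincide with the variational solution. The underlying process is the simplest of the three treated in this subsection: the Brownian motion $\widetilde{B}$ absorbed at $\partial\Omega$, that is, the part process on the open set $\Omega$ with lifetime $\tau_\Omega = \tau_E$ in the notation of case iii) of \eqref{first-X}. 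Unlike the Robin representation \eqref{sol-c0}, no reflection on $\partial\Omega$ is needed, so the absence of normals on the Koch boundary causes no difficulty here: the Dirichlet condition is imposed purely through the state space $\Omega$ and the form domain $H_0^1(\Omega)$.

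First I would identify the Dirichlet form. The killed Brownian motion on $\Omega$ is the Hunt process associated with the regular Dirichlet form $(\mathcal{E}, H_0^1(\Omega))$, $\mathcal{E}(u,v) = \tfrac12 \int_\Omega \nabla u\,\nabla v\,dx$, on $L^2(\Omega, dx)$; regularity of this form, and hence existence of $\widetilde B$ with lifetime equal to the first hitting time of $\partial\Omega$, follows from the fact that the snowflake $\Omega$ is a uniform (extension) domain, as used throughout Section \ref{sec-Notation}. Next I would incorporate the zeroth-order term: the functional $A_t = \delta_0 t$ is a PCAF whose Revuz measure is the smooth (finite-energy) measure $\delta_0\,dx$, so by the perturbation theory of \cite[Theorem 6.1.1 and Theorem 6.1.2]{FUK-book} the semigroup $\mathbf{P}_t f(x) = \mathbb{E}_x[e^{-\delta_0 t} f(\widetilde{B_t});\, t<\tau_\Omega]$, cf. \eqref{semig-gen-A} and \eqref{first-X}, is generated by the perturbed form $\mathcal{E}^\mu_0(u,v) = \mathcal{E}(u,v) + \delta_0\langle u,v\rangle_{dx}$, which up to the normalization of the generator is exactly $a_\infty$ in \eqref{RaD}.

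The representation \eqref{sol-D} is then the $0$-resolvent of this killed, perturbed process, $u = \int_0^\infty \mathbf{P}_t f\,dt$. The remaining steps are: (i) to check this resolvent is finite, which for $\delta_0>0$ is immediate and for $\delta_0=0$ follows from boundedness of $\Omega$ (whence $\mathbb{E}_x\tau_\Omega < \infty$ and the part process is transient, i.e. $\mathrm{Cap}(\mathbb{R}^2\setminus\Omega)>0$); (ii) to use the standard resolvent--form identity $\mathcal{E}^\mu_0(u, v) = \langle f, v\rangle_{dx}$ valid for all $v$ in the form domain, which is precisely the weak equation of \eqref{eq:5mD}; and (iii) to confirm $u \in H_0^1(\Omega)$, i.e. that the Dirichlet datum is attained. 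For (iii) the point is that killing at $\tau_\Omega$ forces $u=0$ on $\partial\Omega$ in the capacity sense, exactly the prescription \eqref{mu-cap} with $\Omega$ in place of $\Omega^n_\varepsilon$; this is the variational reading of the Dirichlet condition in the spirit of \cite{ButtazzoDMasoMosco}.

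I expect the main obstacle to lie in step (iii) together with the identification of the part-process lifetime with the first hitting time of $\partial\Omega$ on the fractal domain. Concretely, one must argue that the absorbed motion $\widetilde B$ reaches $\partial\Omega$ at a time coinciding quasi-everywhere with $\tau_\Omega$, and that the resulting resolvent lands in $H_0^1(\Omega)$ rather than merely in $H^1(\Omega)$; this is where the regularity of the form and the capacitary description of $\partial\Omega$ enter, and where the convention for the factor $\tfrac12$ in the generator versus the unnormalized form $a_\infty$ must be reconciled (equivalently, taking $\widetilde B$ with generator $\Delta$). All other steps are routine consequences of the Dirichlet-form machinery recalled in Section \ref{sec-PCAF}.
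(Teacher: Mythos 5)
Your proposal is correct and follows essentially the same route the paper takes: the paper proves nothing beyond identifying the representation \eqref{sol-D} as the $0$-resolvent of the killed, perturbed Brownian motion whose Dirichlet form is $a_\infty$ in \eqref{RaD} with domain $H^1(\mathbb{R}^2)\cap L^2(\infty_{\Omega^c})$ (the capacitary reading of $H^1_0(\Omega)$ via \eqref{mu-cap}, in the spirit of \cite{ButtazzoDMasoMosco}), which is precisely your chain of steps, including the caveats you flag about the factor $\tfrac12$ normalization and the identification of the part-process form domain with $H^1_0(\Omega)$ on the snowflake.
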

The associated Dirichlet form, say $\mathcal{E}_0^{\mu_{A}}$, is given by \eqref{RaD} with $D(\mathcal{E}^{\mu_{A}}_0) = H^1(\mathbb{R}^2) \cap L^2( \infty_{\Omega^c})$.\\

We shall approach the convergence in $L^2$ of the solutions we are interested in, by first considering convergence of measures. Let $\{\mathbb{P}^n\}_n$ be a sequence of probability measures on $(E, \mathfrak{E})$. We say that $\mathbb{P}^n$ converges weakly-$\star$ to $\mathbb{P}$ on $(E, \mathfrak{E})$ as $n \to \infty$ and write $\mathbb{P}^n \stackrel{w^\star}{\to} \mathbb{P}$, if $\mathbb{E}^n f(X^n) = \int_E f d \mathbb{P}^n \to \int_E f d \mathbb{P}=\mathbb{E}f(X)$, $\forall \, f \in C_b(E)$ where $X^n$ and $X$ are the random variables with probability measures $\mathbb{P}^n$ and $\mathbb{P}$ (that is, $X^n$ convergences in law to $X$ and we also write $X^n \stackrel{law}{\to}X$). If a sequence of stochastic processes converges (weakly) in the sense of finite-dimensional laws (write $X^n \stackrel{f.d.}{\to} X$) we are in need of tightness in order to get convergence in law. Moreover, we write $X^n \stackrel{law}{\to} \infty$ (meaning also that $X^n \stackrel{a.s.}{\to} \infty$, that is almost surely or with probability one) if $\forall\, M, \; \exists n^* \, : \, \mathbb{P}(X^n>M) =1, \; \forall\, n > n^*$. We use vague convergence arguments in this case, that is for a sequence of measures $\mu_n$ on $E \cup \{+ \infty \}$ we have $\mu_n \stackrel{v}{\to} \mu$ if $\langle f, \mu_n \rangle \to \langle f, \mu \rangle$, $\forall\, f \in C_0^+$, the class of continuous functions $f:\mathbb{R} \to \mathbb{R}_+$ with compact support.

\begin{theorem} \label{Thm-M-conv-res}
(\cite{MOS1})The Mosco convergence of the forms is equivalent to the strong convergence of the associated resolvents and semigroups.
\end{theorem}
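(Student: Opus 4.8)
Since the statement is quoted from \cite{MOS1}, the argument is essentially Mosco's; here is how I would organize it. The plan is to exploit the variational characterization of the resolvent. To each closed symmetric form $F^n[u]=\mathcal{E}^n(u,u)$ there corresponds a non-negative self-adjoint operator $A_n$ with resolvent $G^n_\lambda=(\lambda+A_n)^{-1}$ and semigroup $\mathbf{P}^n_t=e^{-tA_n}$, and for $\lambda>0$, $f\in H$ the function $u_n:=G^n_\lambda f$ is the unique minimizer in $H$ of
\begin{equation}
\Phi^n_\lambda(v):=F^n[v]+\lambda\|v\|^2-2\langle f,v\rangle, \label{pf-phi}
\end{equation}
equivalently the unique $u_n\in D(\mathcal{E}^n)$ with $\mathcal{E}^n(u_n,v)+\lambda\langle u_n,v\rangle=\langle f,v\rangle$ for all $v$. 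I would then prove the triangle of implications: $M$-convergence $\Rightarrow$ strong resolvent convergence; strong resolvent convergence $\Leftrightarrow$ strong semigroup convergence; strong resolvent convergence $\Rightarrow$ $M$-convergence.

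For the first implication, fix $\lambda>0$ and $f\in H$, and test the defining equation against $u_n$ to obtain $F^n[u_n]+\lambda\|u_n\|^2=\langle f,u_n\rangle\le\|f\|\,\|u_n\|$, whence $\|u_n\|\le\lambda^{-1}\|f\|$ and $\{F^n[u_n]\}$ is bounded. Thus, along a subsequence, $u_n\rightharpoonup w$ weakly. Part (b) of Definition \ref{def1} gives $F[w]\le\liminf F^n[u_n]$, so $w\in D(\mathcal{E})$. For any competitor $v$, take a recovery sequence $v_n\to v$ strongly with $\limsup F^n[v_n]\le F[v]$ supplied by part (a); minimality $\Phi^n_\lambda(u_n)\le\Phi^n_\lambda(v_n)$, together with weak lower semicontinuity of the norm and $\langle f,u_n\rangle\to\langle f,w\rangle$, yields $\Phi_\lambda(w)\le\Phi_\lambda(v)$ for every $v$. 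Hence $w=G_\lambda f$ and the whole sequence converges weakly by uniqueness of the minimizer. Choosing $v=G_\lambda f$ in the recovery sequence forces $\limsup\Phi^n_\lambda(u_n)\le\Phi_\lambda(G_\lambda f)\le\liminf\Phi^n_\lambda(u_n)$, so the energies and the norms converge; weak convergence together with $\|u_n\|\to\|w\|$ upgrades, in the Hilbert space $H$, to strong convergence $G^n_\lambda f\to G_\lambda f$.

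The equivalence of resolvent and semigroup convergence is the Trotter--Kato theorem for the self-adjoint (hence analytic, contraction) semigroups at hand: through the spectral calculus one passes between $G^n_\lambda$ and $\mathbf{P}^n_t$ via the Laplace transform $G^n_\lambda=\int_0^\infty e^{-\lambda t}\mathbf{P}^n_t\,dt$, and strong convergence at a single $\lambda_0>0$ propagates to all $t$, uniformly on compact time intervals. For the converse (resolvent $\Rightarrow$ $M$-convergence) I would recover part (b) from the representation $F[u]=\sup_{\lambda>0}\lambda\langle u-\lambda G_\lambda u,u\rangle$, which, being a supremum of non-negative bounded quadratic forms, is weakly lower semicontinuous; and I would build the recovery sequence in part (a) from the Yosida approximations $\lambda G^n_\lambda u$, letting $n\to\infty$ first and then $\lambda\to\infty$ along a suitable diagonal.

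The main obstacle is precisely this last step: producing the strong recovery sequence of part (a) from resolvent information alone. One must reconcile the two limits $n\to\infty$ and $\lambda\to\infty$ by a diagonal extraction and control the energies $F^n[\lambda G^n_\lambda u]$ uniformly, all while the effective domains $D(\mathcal{E}^n)$ vary with $n$ and the limiting functional may take the value $+\infty$ (as for the degenerating insulating layers, cf.\ \eqref{Finf}). Keeping the energy bounds uniform under these degeneracies is the delicate point; the liminf inequality and the Trotter--Kato step are comparatively routine.
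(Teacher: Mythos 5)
Note first that the paper contains no proof of this theorem: it is stated as a citation of \cite{MOS1}, so your reconstruction can only be compared with Mosco's original argument, which it follows in its overall architecture (variational characterization of the resolvent $G^n_\lambda f$ as the minimizer of $\Phi^n_\lambda$, energy bounds and weak compactness, identification of the limit by uniqueness, upgrade to strong convergence via convergence of norms, Yosida approximation for the converse). Your first implication ($M$-convergence implies strong resolvent convergence) is correct and essentially complete, and the passage between resolvents and semigroups via spectral calculus/Trotter--Kato is correctly invoked.

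There is, however, a genuine gap in the converse direction, precisely in your derivation of condition (b). Weak lower semicontinuity of the \emph{limit} functional $F=\sup_{\lambda>0}F_\lambda$, with $F_\lambda[u]=\lambda\langle u-\lambda G_\lambda u,u\rangle$, yields only $\liminf_n F[v_n]\ge F[u]$ for $v_n\rightharpoonup u$; but condition (b) of Definition \ref{def1} demands $\liminf_n F^n[v_n]\ge F[u]$, where the functionals vary with $n$, and lower semicontinuity of $F$ alone says nothing about the values $F^n[v_n]$. The repair uses the same representation, but at level $n$: setting $Q^n_\lambda:=\lambda(I-\lambda G^n_\lambda)$, a non-negative self-adjoint operator with $\|Q^n_\lambda\|\le\lambda$, one has, for every fixed $w$, $F^n[v_n]\ge F^n_\lambda[v_n]=\langle Q^n_\lambda v_n,v_n\rangle\ge 2\langle v_n,Q^n_\lambda w\rangle-\langle Q^n_\lambda w,w\rangle$ (positivity of $Q^n_\lambda$ applied to $v_n-w$). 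Since $Q^n_\lambda w\to Q_\lambda w$ strongly (this is exactly strong resolvent convergence) and $v_n\rightharpoonup u$ with bounded norms, the right-hand side converges to $2\langle u,Q_\lambda w\rangle-\langle Q_\lambda w,w\rangle$; choosing $w=u$ gives $\liminf_n F^n[v_n]\ge F_\lambda[u]$, and taking the supremum over $\lambda$ finishes. Incidentally, the step you single out as the main obstacle, namely uniform control of the energies $F^n[\lambda G^n_\lambda u]$ in part (a), is in fact automatic: testing the resolvent equation against $G^n_\lambda(\lambda u)$ gives $F^n[\lambda G^n_\lambda u]=\lambda^2\langle u,G^n_\lambda u\rangle-\lambda^3\|G^n_\lambda u\|^2$, an expression in the resolvent alone, which for fixed $\lambda$ converges as $n\to\infty$ to $F[\lambda G_\lambda u]\le F_\lambda[u]\le F[u]$, so the diagonal extraction goes through without any extra estimate, including when $F$ takes the value $+\infty$ off its domain.
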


Convergence of semigroups, by the Markov property, provides convergence of finite dimensional laws. In particular (let the symbol ''$\to$'' denote strong convergence of semigroups), for the semigroup \eqref{semig-gen-A2}, under \eqref{111} and \eqref{222}, consider that (see theorems  \ref{teo1m} and \eqref{teo2}):
\begin{itemize}
\item[i)] Robin, under \eqref{333} with $c_0>0$ and $\delta_0 \geq 0$,
\begin{equation}
\mathbf{P}^n_t f_n(x) \to \mathbb{E}_x\left[ e^{-\delta_0 t}f(B^+_t)\,;\, t < \zeta^{\Omega} \right]; \label{conv-semi-R}
\end{equation}
\item[ii)]  Neumann, under \eqref{333} with $c_0=0$ and $\delta_0 >0$,
\begin{equation}
\mathbf{P}^n_t f_n(x) \to  \mathbb{E}_x\left[ e^{-\delta_0 t}f(B^+_t) \right]; \label{conv-semi-N}
\end{equation}
\item[iii)] Dirichlet, under \eqref{ISO} and $\delta_0 \geq 0$,
\begin{equation}
\mathbf{P}^n_t f_n(x) \to  \mathbb{E}_x\left[ e^{-\delta_0 t} f(\widetilde{B_t})\,;\, t < \tau_{\Omega} \right]=\mathbb{E}_x\left[ e^{-\delta_0 t} f(B_t^+)\,;\, t < \tau_{\Omega} \right]. \label{conv-semi-D}
\end{equation}
\end{itemize}
Thus, starting from the part process of $\widetilde{B^{\nu,*}_t}$ on $\Omega^n_\varepsilon$ (and therefore from \eqref{semig-gen-A2}), we simply write
\begin{equation}
\mathbf{P}^n_t f_n(x) \to \mathbf{P}_t f(x) = \mathbb{E}_x\left[ e^{-\delta_0 t} f(B^+_t);\, t < T_{c_\infty} \right] \label{conv-semi-Gen}
\end{equation}
where the stopping time depends on $\lim_{n\to \infty} c_n = c_\infty \in [0, \infty]$. We arrive at the reflecting BM on $\overline{\Omega}$ stopped by $T_{c_\infty}$, that is the lifetime depends on the asymptotic behaviour of the process on the thin layer $\overline{\Sigma^n}$.  However, the convergence in \eqref{conv-semi-Gen} follows once the convergence of a suitable sequence of stopping times to $T_{c_\infty}$ in an appropriate sense has been shown. If $m_n \to m$, for the Borel sets $\{\Lambda_j\}$ we have that
\begin{equation}
\mathbb{E}^n_{m_n} [\mathbf{1}_{\Lambda_1}(X^n_{t_1}) \cdots \mathbf{1}_{\Lambda_k}(X^n_{t_k})] \to \mathbb{E}_m [\mathbf{1}_{\Lambda_1}(X_{t_1}) \cdots \mathbf{1}_{\Lambda_k}(X_{t_k})] \label{markov-p-0}
\end{equation} 
as $n\to \infty$. This is due to the Markov property and the fact that
\begin{equation}
\mathbb{E}^n_x [\mathbf{1}_{\Lambda_1}(X^n_{t_1}) \cdots \mathbf{1}_{\Lambda_k}(X^n_{t_k})] = \mathbf{P}^n_{t_1} \mathbf{1}_{\Lambda_1} \mathbf{P}^n_{t_2 - t_1} \mathbf{1}_{\Lambda_2} \cdots \mathbf{P}^n_{t_{k} - t_{k-1}} \mathbf{1}_{\Lambda_k}(x) \label{markov-p}
\end{equation}
where $\mathbf{P}_t^n f(x)$ is the transition (non conservative) semigroup \eqref{semig-gen-A2}. Thus we have convergence of finite dimensional laws. If in addition, $\mathbb{P}_{m_n}^n$ is tight, then $\mathbb{P}_{m_n}^n$ converges weakly-$\star$ to $\mathbb{P}_m$. 
\begin{definition}
The sequence of probability measures $\{\mathbb{P}^n\}_n$ on a metric space $E$ is said to be tight if for every $\epsilon>0$, there exists a compact set $K \subseteq E$ such that $\sup_n \mathbb{P}^n(E \setminus K) \leq \epsilon$.
\end{definition}
We use the (Kolmogorov-Chentsov) criterion based on the moments of increments, that is, the sequence $X^n$ is tight if $X^n_0=0$ and there exist $\alpha, \beta >0$ and $C>0$ such that, for $T>0$,
\begin{equation}
\mathbb{E}[| X^n_t - X^n_s |^\alpha] \leq C \,|t-s|^{\beta+1} \label{mom-criterion}
\end{equation}
holds uniformly on $n \in \mathbb{N}$ and $0 \leq s,t \leq T $ (see \cite[Corollary 14.9]{Kallenberg97}). Thus, the sequence $X^n$ is tight in the space of all continuous processes, equipped with the norm of locally uniform convergence.

\section{Main results}
\label{sec-results}

We consider occupation measures on both $\Omega^n_\varepsilon$ and $\partial \Omega^n$ (local times) instead of planar Brownian motions. Let $\zeta^{\Omega^n_\varepsilon}$ be the lifetime of $B^{\nu,*}_t$ on $\Omega^n_\varepsilon$ and $\zeta^{\Omega}$ be the lifetime of the limit process on $\Omega$. Let us focus now on \eqref{conv-semi-Gen}. Let $X_t^n$ be the $m$-version of $B^{\nu(n),*}_t = \{ \widetilde{B^{\nu(n),*}_t}, \; t < T_{c_n}\}$  with transition semigroup $\mathbf{P}^n_t$ (associated with the form $\mathcal{E}^{\mu_{A^n}}_0$) and $X_t$ be the process with transition semigroup $\mathbf{P}_t$ (associated with the form $\mathcal{E}^{\mu_{A^\infty}}_0$). Our aim is to prove the following theorem.
\begin{theorem}
Let $A^n_t$ be the PCAF associated with $M^n_t=\mathbf{1}_{(t < \zeta^{\Omega^n_\varepsilon})}$ as in \eqref{semig-gen-A2}. We have:
\begin{itemize}
\item [i)] $c_n \to c_0 \in (0,\infty)$ $\Leftrightarrow$ $\zeta^{\Omega^n_\varepsilon} \stackrel{law}{\to} \zeta^{\Omega} \Leftrightarrow \mu_{A^n} \stackrel{w}{\to} \mu_{A^\infty} = c_0\, \mu_\alpha$ ($\mu_\alpha$ is defined in \eqref{mu-alpha-def}).\\ $X_t$ is an elastic (or partially reflected) BM on $\overline{\Omega}$;
\item [ii)] $c_n \to 0$ $\Leftrightarrow$ $\zeta^{\Omega^n_\varepsilon} \stackrel{a.s.}{\to} \infty \Leftrightarrow \mu_{A^n} \stackrel{w}{\to} \mu_{A^\infty} = 0$.\\ $X_t$ is a reflecting BM on $\overline{\Omega}$; 
\item [iii)] $c_n w^n \to 0$, $c_n \to \infty$ $\Leftrightarrow$ $\zeta^{\Omega^n_\varepsilon} \stackrel{law}{\to} \tau_\Omega \Leftrightarrow \mu_{A^n} \stackrel{v}{\to} \mu_{A^\infty} = \infty$ (is locally infinite).\\ $X_t$ is an absorbing BM on $\Omega$.
\end{itemize}
\label{thm-mu}
\end{theorem}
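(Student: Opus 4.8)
The plan is to prove all three cases through a single chain of equivalences, treating the scalar asymptotics of $c_n$ as the driving parameter, the Revuz measure $\mu_{A^n}$ as the analytic bookkeeping device, and the lifetime $\zeta^{\Omega^n_\varepsilon}$ as the probabilistic output, and only at the end reading off the identity of the limit process $X_t$ from the Mosco-convergence theorems already at our disposal. First I would pin down the Revuz measure of the killing functional. With $\delta_n=0$, the multiplicative functional $M^n_t=\mathbf{1}_{(t<\zeta^{\Omega^n_\varepsilon})}$ in \eqref{semig-gen-A2} corresponds to killing at rate $c_n$ along the symmetric boundary local time; by \eqref{localT-corresp} this functional is $A^n_t=c_n L^{\partial\Omega^n}_t(B^{\nu(n),*})$. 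The second line of the Revuz correspondence \eqref{Rev-Corr-2} identifies the Revuz measure of $L^{\partial\Omega^n}$ as $\sigma_n\,\mathfrak{s}|_{\partial\Omega^n}$, whence
\[
\mu_{A^n}=c_n\,\sigma_n\,\mathbf{1}_{\partial\Omega^n}\,\mathfrak{s}.
\]
Testing this against any $g_n\rightharpoonup g^*$ in $H^1(\Omega)$ and invoking Proposition \ref{rr} gives $\langle g_n,\mu_{A^n}\rangle=c_n\,\sigma_n\int_{\partial\Omega^n}g_n\,d\mathfrak{s}\to c_0\int_{\partial\Omega}g^*\,d\mu$. Since $\sigma_n\,\mathfrak{s}|_{\partial\Omega^n}\to\mu_\alpha$ is a fixed, non-degenerate limit (Proposition \ref{rr} with $g_n\equiv g^*$), the scalar $c_n$ is recovered from $\mu_{A^n}$, so $c_n\to c_0\in(0,\infty)$, $c_n\to0$, $c_n\to\infty$ are \emph{equivalent} to $\mu_{A^n}\stackrel{w}{\to}c_0\mu_\alpha$, $\mu_{A^n}\stackrel{w}{\to}0$, $\mu_{A^n}\stackrel{v}{\to}\infty$, respectively; this settles the right-hand equivalences in i)--iii) under \eqref{333} (and, for iii), under \eqref{ISO}).

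Next I would translate measure convergence into lifetime convergence. The exponential representation $\mathbb{P}_x(\zeta^{\Omega^n_\varepsilon}>t\mid B^{\nu,*})=\exp(-c_n L^{\partial\Omega^n}_t)$ reduces the matter to the joint behaviour of $c_n$ and the boundary local time. Using the non-trap property of the Koch domains (so that occupation measures and Green functions stay finite) together with the Revuz correspondence, one obtains $L^{\partial\Omega^n}_t(B^{\nu(n),*})\to L^{\partial\Omega}_t(B^+)$ in law, where $B^+$ is the reflecting BM on $\overline{\Omega}$. Passing to the limit in the survival probability yields the elastic law $\exp(-c_0 L^{\partial\Omega}_t)$, hence $\zeta^{\Omega^n_\varepsilon}\stackrel{law}{\to}\zeta^{\Omega}$, when $c_n\to c_0$; the constant $\mathbf{1}$, hence $\zeta^{\Omega^n_\varepsilon}\stackrel{a.s.}{\to}\infty$, when $c_n\to0$; and instantaneous killing on first contact with $\partial\Omega$, i.e.\ $\zeta^{\Omega^n_\varepsilon}\stackrel{law}{\to}\tau_\Omega$, when $c_n\to\infty$. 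Each step is reversible because the limiting local time is non-degenerate, which supplies the middle equivalences.

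Finally I would identify the limit process. Theorems \ref{teo1m} and \ref{teo2} furnish the $M$-convergence of the functionals $F^n$ to $F_{c_0}$ and to $F_\infty$; by Theorem \ref{Thm-M-conv-res} this is equivalent to strong convergence of the associated semigroups, which through \eqref{conv-semi-R}--\eqref{conv-semi-D}, summarised in \eqref{conv-semi-Gen}, pins the limits to the elastic BM \eqref{sol-c0} when $c_0\in(0,\infty)$, to the reflecting BM when $c_0=0$, and to the absorbing BM \eqref{sol-D} under \eqref{ISO}. Finite-dimensional convergence then follows from the Markov property \eqref{markov-p-0}, and tightness from the Kolmogorov--Chentsov criterion \eqref{mom-criterion}, whose hypotheses are verified uniformly in $n$ from the Gaussian heat-kernel bound \eqref{Gbound}; together these upgrade the convergence to convergence in law of $X^n_t$ to $X_t$, completing the identification of $X_t$ in each case.

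I expect the main obstacle to be case iii). Here two mechanisms must be reconciled: the collapsing layer $\Sigma^n$ becomes insulating through \eqref{ISO} and forces the Dirichlet limit via Theorem \ref{teo2}, while the boundary killing with $c_n\to\infty$ forces absorption at $\partial\Omega$. Since $w^n|_{\partial\Omega^n}=1$, the insulation does not originate at the interface itself, so the coupling \eqref{cond-nu}--\eqref{nu-n} between the skewness $\nu(n)$ and $c_n w^n$ must be tracked carefully to guarantee that the vague limit $\mu_{A^\infty}=\infty$ correctly encodes the capacity-sense Dirichlet condition \eqref{mu-cap}; verifying tightness uniformly in $n$ despite the blow-up of the killing rate is the delicate point.
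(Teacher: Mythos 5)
There is a genuine gap, and it sits at the very first step, on which everything else in your chain rests. You assert that the killing functional of $M^n_t=\mathbf{1}_{(t<\zeta^{\Omega^n_\varepsilon})}$ is $A^n_t=c_n L^{\partial\Omega^n}_t(B^{\nu(n),*})$, i.e.\ that the lifetime $\zeta^{\Omega^n_\varepsilon}$ \emph{is} exponential killing at rate $c_n$ against the interface local time. That identification is false at every finite $n$: $B^{\nu,*}$ is the part process of $\widetilde{B^{\nu,*}}$ on $\Omega^n_\varepsilon$, so it dies only upon exiting through the outer boundary $\partial\Omega^n_\varepsilon$, and the Revuz measure of its killing functional is $\infty_{(\Omega^n_\varepsilon)^c}$ (this is stated explicitly right after \eqref{semig-gen-A2}), not a measure carried by $\partial\Omega^n$. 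Hitting the interface does not kill the process; it only flips it into the thin layer with probability governed by $\nu(n)$. The conversion of ``layer crossing followed by absorption on $\partial\Omega^n_\varepsilon$'' into ``elastic killing on $\partial\Omega^n$ at rate $c_n$'' is an \emph{asymptotic} statement, mediated by the coupling \eqref{cond-nu}--\eqref{nu-n} between $\nu(n)$ and $c_n w^n$, and proving it is precisely the hard core of the paper. The paper never claims your identity; it introduces the auxiliary time $\widehat{\zeta^{\Omega^n}}=\tau_{\Omega^n_\varepsilon}\wedge\widehat{T_{c_n}}$ of \eqref{def-life-sequence}, for which the local-time representation \emph{is} true by construction, exploits the a.s.\ inequality $\widehat{\zeta^{\Omega^n}}\leq\zeta^{\Omega^n_\varepsilon}$ (hence $\widehat{R^n_\lambda}\mathbf{1}\leq R^n_\lambda\mathbf{1}$), obtains strong convergence of $R^n_\lambda$ from M-convergence of the forms (Theorem \ref{Thm-M-conv-res}), proves $\widehat{\zeta^{\Omega^n}}\stackrel{law}{\to}T_{c_\infty}$ by splitting over $\{t<\tau_{\Omega^n}\}$ and invoking Proposition \ref{prop-L-w}, and finally runs a norm-sandwich argument to show that $\widehat{R^n_\lambda}$ and $R^n_\lambda$ converge strongly to the \emph{same} limit (Theorem \ref{prop-exp}). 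Only after this two-sided comparison is it legitimate to read the Revuz measure of $A^n$ asymptotically as $c_n\sigma_n\mathbf{1}_{\partial\Omega^n}\,\mathfrak{s}$, which is how the proof of Theorem \ref{thm-mu} actually extracts $\mu_{A^\infty}$. Your proposal has no substitute for this approximation principle: by building it into the definition you beg the question that the theorem is about.

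The rest of your scaffolding is sound and does mirror the paper: the computation $\langle g_n,c_n\sigma_n\mathfrak{s}\rangle\to c_0\int_{\partial\Omega}g^*\,d\mu$ via Proposition \ref{rr}, the convergence in law of $L^{\partial\Omega^n}$ (which the paper proves through tightness, Proposition \ref{Prop-tight}, and moment/Laplace-transform convergence, Proposition \ref{prop-L-w}, rather than deducing it from non-trapness alone as you suggest), and the identification of the limit process through Theorems \ref{teo1m}, \ref{teo2}, \ref{Thm-M-conv-res} together with \eqref{markov-p-0} and \eqref{mom-criterion}. Your closing remarks on case iii) correctly flag where the tension lies, but flagging it is not resolving it: in the paper, case iii) is settled by showing $\tau_{\Omega^n_\varepsilon}\to\tau_\Omega\Leftrightarrow c_n\to\infty$ using Remark \ref{rmk-life-domain} and \eqref{maps-one}, and case i) is then obtained by exclusion from ii) and iii) combined with Theorem \ref{prop-exp}; your appeal to ``reversibility because the limiting local time is non-degenerate'' would need to be replaced by arguments of this kind to close the equivalences.
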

The main tools we deal with are stopping times. We first assume that a.s. $\zeta^{\Omega^n_\varepsilon} = T_{c_n}$ $\forall\, n$, that is the lifetime is equivalent to a random time depending on $c_n \geq 0$. Then, we focus on the sequence of random times $T_{c_n}$ with $c_n \to c_\infty \in [0, \infty]$ as $n \to \infty$ and we study the convergence $T_{c_n} \to T_{c_\infty}$. Thus, $T_{c_\infty}$ plays the role of lifetime for the limit BM on $\Omega$ (or $\overline{\Omega}$).

\begin{remark}
\label{rmk-life-boundary}
Let $\zeta^n$ be a r.v. with density law $ \mathbb{P}(\zeta^n \in dx) = c_n \exp(-c_n\, x) \mathbf{1}_{[0, \infty)}(x) \, dx $. We obviously have that $\mathbb{P}(\zeta^n \leq x) = 1- \exp(-c_n x)$ and $\mathbb{E}\zeta^n= 1/c_n$. We denote by $\zeta^\infty$ the limit of $\zeta^n$ as $n\to \infty$ in the following sense. 

If $c_n \to \infty$, then $\zeta^n \stackrel{\mathbb{P}}{\to} \zeta^\infty=0$. Indeed, by Markov's inequality, we have that $\mathbb{P}(|\zeta^n - \zeta^\infty| > \epsilon) \leq \epsilon^{-1} \mathbb{E}|\zeta^n |  \to 0$ as $c_n\to \infty$. Moreover, $\zeta^\infty\geq 0$ with $\mathbb{E} \zeta^\infty=0$. Thus, $\mathbb{P}(\zeta^\infty=0)=1$. 

If $c_n \to 0$ we have that $\mathbb{P}(\zeta^\infty \leq x)=0$ for all $x \in [0, \infty)$ with $\zeta^\infty \geq 0$ and, by observing that $\mathbb{E} \zeta^\infty=\infty$, we conclude that $ \mathbb{P}(\lim_{n \to \infty} \zeta^n = \zeta^\infty = \infty) = 1$. 

If $c_n\to c_0 \in (0, \infty)$, we simply have that $\zeta^n \stackrel{law}{\to} \zeta^\infty=\zeta$ as $n \to \infty$ where $\zeta$ is the exponential r.v. with parameter $c_0$.
\end{remark} 

We can also relate a r.v.  $\zeta^n$ to the time the process $B^{\nu,*}_t$ spends on (or cross) the pre-fractal $\partial \Omega^n$ as follows. For a fixed $n$, denote by $\widehat{\zeta^{\Omega^n}}$ the r.v. written as  
\begin{equation}
\widehat{\zeta^{\Omega^n}} : = \inf \{ 0 < s \leq \zeta^{\Omega^n_\varepsilon}:\, L_s^{\partial \Omega^n}(\widetilde{B^{\nu,*}}) > \zeta^n\} = \inf \{ s>0 \, :\, L_s^{\partial \Omega^n}(B^{\nu,*}) > \zeta^n\}  \label{def-life-sequence}
\end{equation}
assuming that $\zeta^n$ is independent from $B^{\nu,*}$ (and therefore, from the local time on the pre-fractal boundary). Obviously, $\zeta^{\Omega^n}$ is a sequence of Markov stopping times before absorption on $\partial \Omega^n_\varepsilon$. To be clear, $\widehat{\zeta^{\Omega^n} } \leq \zeta^{\Omega^n_\varepsilon}$ with probability one, $\forall\, n \in \mathbb{N}$ and \eqref{def-life-sequence} can be rewritten as
\begin{align*}
\widehat{\zeta^{\Omega^n}} = \tau_{\Omega^n_\varepsilon} \wedge \widehat{T_{c_n}}
\end{align*}
in terms of the first exit time $\tau_{\Omega^n_\varepsilon}$ and $\widehat{T_{c_n}} := \inf\{s>0\,:\, L^{\partial \Omega^n}_s (\widetilde{B^{\nu, *}_s})> \zeta^n\}$ with $\zeta^n$ exponentially distributed, that is $\widehat{T_{c_n}}$ corresponds to the elastic boundary condition. We also observe that \eqref{def-life-sequence} can be regarded as the lifetime of the process up to the last visit on $\overline{\Omega^n}$ assuming that $\zeta^n$ is the time the process spends on the pre-fractal boundary before absorption on $\partial \Omega^n_\varepsilon$. In this case we have the lifetime on $\Omega^n$ written as (see also Remark \ref{rmk-Cap})
\begin{equation}
\zeta^{\Omega^n} = \sup \{s> 0\,:\, B^{\nu(n),*}_s \in \overline{\Omega^n} \} =  \sigma_{\overline{\Omega^n}}  \label{def-life-sequence-2}
\end{equation}
which is no longer Markovian. Moreover, $\mathbb{P}_x(\widehat{\zeta^{\Omega^n_\varepsilon}} > \zeta^{\Omega^n}) > 0$, $m$-a.e. $x$ and 
\begin{align*}
\mathbb{P}_x(\sigma_{\overline{\Omega^n}} > 0) = \mathbb{P}_x(\tau_{\Omega^n} < \infty). 
\end{align*}

We consider \eqref{def-life-sequence} with exponential threshold $\zeta^n$ given in Remark \ref{rmk-life-boundary}.

\begin{remark}
\label{rmk-life-domain}
From the discussion in the previous remark, we have the following cases.\\
If $c_n \to \infty$, then $\zeta^n \stackrel{\mathbb{P}}{\to} 0$ and
\begin{align}
\widehat{\zeta^{\Omega^n}} \stackrel{law}{\to} \inf\{s > 0\,:\, L^{\partial \Omega}_s(B^+) >0 \} =: \tau_\Omega \label{maps-one}
\end{align}
provided that the occupation time sequence $L^{\partial \Omega^n}_t$ converges in law to the local time $L^{\partial \Omega}_t$ of the corresponding limit process.\\ 
Similarly, as $c_n \to 0$, $\mathbb{P}(\lim_{n \to \infty} \zeta^n  = \infty) = 1$ and then 
\begin{align}
\widehat{\zeta^{\Omega^n}} \stackrel{law}{\to} \inf\{s > 0\,:\, L^{\partial \Omega}_s(B^+) > \infty \} = \infty. \label{maps-two}
\end{align}
If $c_n \to c_0 \in (0,\infty)$, then $\zeta^n \stackrel{law}{\to} \zeta$ and the lifetime of the limit process depends on the random variable $\zeta$ with parameter $c_0$. In particular, we have that
\begin{equation}
\widehat{\zeta^{\Omega^n}} \stackrel{law}{\to} \zeta^{\Omega} = \inf\{ s> 0\,:\, L^{\partial \Omega}_s(B^+) > \zeta \} \label{maps-three}
\end{equation}
provided the convergence in law of the occupation time process on the fractal boundary $\partial \Omega$.
\end{remark}

For the sake of simplicity we write $\zeta^{\Omega}$ in place of $\zeta^{\Omega^\infty}$. We focus on the PCAF
\begin{equation}
\widetilde{A_t^{n+}} =\Gamma^{\Sigma^n}_t(\widetilde{B^{\nu,*}}) + L^{\partial \Omega^n+}_t(\widetilde{B^{\nu,*}}) = \Gamma^{\Omega^n_\varepsilon \setminus \Omega^n}_t(\widetilde{B^{\nu,*}}) \label{ThmA}
\end{equation}
and $\int_0^{t \wedge \tau_{\Omega^n_\varepsilon}} d\widetilde{A^{n+}_s}$ is the occupation time process on $\Omega^n_\varepsilon \setminus \Omega^n$ of the skew BM $B^{\nu,*}$. Let us write
\begin{align*}
R^n_\lambda \mu_n^+(x) = & 2 (1+c_n)^{-1}c_n \, \sigma_n\, \mathbb{E}^n_x \left[ \int_0^{\zeta^{\Omega^n_\varepsilon}} e^{-\lambda t - t \delta_n}\mathbf{1}_{\partial \Omega^n}(\widetilde{B^{\nu(n),*}_t})dt \right]
\end{align*}
where $\mu_n^+(dx) = 2 (1+c_n)^{-1} c_n \sigma_n \, \mathbf{1}_{\partial \Omega^n}(x)\, m^n_\varepsilon(dx)$. Let $A^{n+}_t \in \mathbf{A}^+_c$ be in Revuz correspondence with the measure $\mu_n^+$.

\begin{proposition}
\label{prop-last}
Under \eqref{555}, the boundary local time $\{L^{\partial \Omega+}_t\,,\, t < \zeta^{\partial\Omega} \}$ is the unique PCAF such that, for any $x \in \Omega^n_\varepsilon$, 
\begin{equation}
R^n_\lambda \mu_n^+(x) \to  \mathbb{E}_x \left[ \int_0^{\zeta^{\Omega}} e^{-\lambda t - \delta_0 t} \,dL^{\partial \Omega+}_t \right] \quad  \textrm{as} \quad n\to \infty. \label{prop-last-eq}
\end{equation}
\end{proposition}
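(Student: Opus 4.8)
The plan is to identify $R^n_\lambda\mu_n^+$ with the $(\lambda+\delta_n)$-potential of the boundary additive functional $A^{n+}$ and then to let $n\to\infty$ by coupling the surface-integral convergence of Proposition~\ref{rr} with the strong resolvent convergence supplied by the Mosco convergence of Theorem~\ref{teo1m} and Theorem~\ref{Thm-M-conv-res}; uniqueness will then follow from the one-to-one Revuz correspondence. Since $A^{n+}$ is, by construction, the PCAF in Revuz correspondence with $\mu_n^+=2(1+c_n)^{-1}c_n\sigma_n\mathbf{1}_{\partial\Omega^n}m^n_\varepsilon$, it is represented by $A^{n+}_t=2(1+c_n)^{-1}c_n\sigma_n\int_0^t\mathbf{1}_{\partial\Omega^n}(\widetilde{B^{\nu,*}_s})\,ds$, whose $(\lambda+\delta_n)$-potential is precisely the right-hand side defining $R^n_\lambda\mu_n^+$, that is $R^n_\lambda\mu_n^+=U^{\lambda+\delta_n}_{A^{n+}}\mathbf 1$. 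The statement thus reduces to proving $U^{\lambda+\delta_n}_{A^{n+}}\mathbf 1\to U^{\lambda+\delta_0}_{L^{\partial\Omega+}}\mathbf 1$ and to recognizing $L^{\partial\Omega+}$ as the PCAF whose Revuz measure is the weak limit of $\mu_n^+$.

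Next I would test against an arbitrary $\phi\in C_b(\Omega^*)$. Using the $m^n_\varepsilon$-symmetry of the form and the duality between potentials of PCAFs and resolvents (\cite{FUK-book}), one has
\begin{equation*}
\langle R^n_\lambda\mu_n^+,\phi\rangle_{m^n_\varepsilon}=\int_{\partial\Omega^n}R^n_\lambda\phi\,\mu_n^+(dy)=2(1+c_n)^{-1}c_n\,\sigma_n\int_{\partial\Omega^n}R^n_\lambda\phi\,d\mathfrak s,
\end{equation*}
with $R^n_\lambda\phi$ the resolvent in \eqref{Res-n}. Theorem~\ref{teo1m} and Theorem~\ref{Thm-M-conv-res} give $R^n_\lambda\phi\to R_\lambda\phi$ strongly in $L^2(\Omega^*)$, and the uniform energy bound inherent in the $M$-convergence promotes this to $R^n_\lambda\phi|_\Omega\rightharpoonup R_\lambda\phi|_\Omega$ weakly in $H^1(\Omega)$. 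Applying Proposition~\ref{rr} with $g_n=R^n_\lambda\phi|_\Omega$ and $c_n\to c_0$ then yields
\begin{equation*}
2(1+c_n)^{-1}c_n\,\sigma_n\int_{\partial\Omega^n}R^n_\lambda\phi\,d\mathfrak s\longrightarrow\frac{2c_0}{1+c_0}\int_{\partial\Omega}R_\lambda\phi\,d\mu=\int_{\partial\Omega}R_\lambda\phi\,\mu_{A^+}(dy),
\end{equation*}
with $\mu_{A^+}=\frac{2c_0}{1+c_0}\mu_\alpha$ and $R_\lambda$ the resolvent of the reflecting limit motion $B^+$ on $\overline\Omega$. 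By the Revuz correspondence \eqref{Rev-Corr} for $B^+$ the right-hand side equals $\langle U^{\lambda+\delta_0}_{L^{\partial\Omega+}}\mathbf 1,\phi\rangle$; since $m^n_\varepsilon\to m$ on $\Omega$ and $\phi$ is arbitrary, this establishes the convergence in the weak$/L^2$ sense.

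To pass from this to the pointwise statement for every $x\in\Omega^n_\varepsilon$, I would invoke the Gaussian bound \eqref{Gbound} and the ensuing uniform estimate \eqref{up-bound}, which give equicontinuity and uniform integrability of the family $x\mapsto R^n_\lambda\mu_n^+(x)$ on compact sets, so that the weak limit is attained pointwise. Uniqueness of the realizing PCAF is then immediate from \cite{FUK-book}: a PCAF in $\mathbf{A}^+_c$ is determined up to equivalence by its Revuz measure, and the limit potential determines that measure through $\lambda\langle U^\lambda_A\mathbf 1,m\rangle\to\langle\mathbf 1,\mu_A\rangle$ as $\lambda\to\infty$. Hence the boundary local time $L^{\partial\Omega+}$, carrying the Revuz measure $\frac{2c_0}{1+c_0}\mu_\alpha$, is the only PCAF for which \eqref{prop-last-eq} can hold.

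The main obstacle is the simultaneous passage to the limit in the operator $R^n_\lambda$ and in the singular, boundary-concentrated measure $\mu_n^+$. One must secure genuine weak $H^1(\Omega)$ convergence of the resolvent densities $R^n_\lambda\phi$ --- not merely $L^2$ convergence --- so that the trace-type limit of Proposition~\ref{rr} is licit, and one must ensure that the fractal nature of $\partial\Omega$ does not spoil the identification of the limiting boundary Revuz measure. Here the non-trap property of $\Omega^n$ and of $\Omega$ guarantees that the occupation and local-time functionals are bona fide PCAFs with smooth Revuz measures, while the heat-kernel bounds \eqref{Gbound}--\eqref{up-bound} control the boundary contributions uniformly in $n$; this uniformity is exactly what legitimizes the interchange of limits and upgrades the convergence to the pointwise assertion.
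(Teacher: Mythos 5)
Your overall skeleton (read $R^n_\lambda\mu_n^+$ as the potential of a boundary PCAF, use Proposition \ref{rr} for the boundary integrals, Mosco/resolvent convergence for the operators, and the Revuz correspondence for uniqueness) matches the paper's, but the central analytic step of your argument has a genuine gap. You apply Proposition \ref{rr} with the \emph{varying} functions $g_n=R^n_\lambda\phi|_\Omega$, and to legitimize this you claim that ``the uniform energy bound inherent in the $M$-convergence'' upgrades strong $L^2(\Omega^*)$ convergence to weak $H^1(\Omega)$ convergence. This is false as stated: the uniform resolvent bound controls $F^n[R^n_\lambda\phi]=\int_{\Omega^n_\varepsilon}a^n_\varepsilon|\nabla R^n_\lambda\phi|^2dx+\delta_n\int_{\Omega^n_\varepsilon}(R^n_\lambda\phi)^2dx$, and on $\Omega\setminus\overline{\Omega^n}\subset\Sigma^n$ the weight is $a^n_\varepsilon=c_n\sigma_n w^n$ with $\sigma_n=\alpha^n/4^n\to 0$ and $c_nw^n\to 0$ (this is precisely the insulating regime \eqref{general-cw}). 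So the energies give uniform gradient bounds only on $\Omega^n$, not on $\Omega$; and since $\partial\Omega^n$ meets $\partial\Omega$ at the pre-fractal vertices, no interior bound suffices for the trace limit \eqref{cnH}. The repair needs the uniform extension theorem \cite[Theorem A.3]{CV-asy}: replace $R^n_\lambda\phi|_\Omega$ by an extension $E_n\bigl(R^n_\lambda\phi|_{\Omega^n}\bigr)$, uniformly bounded in $H^1(\mathbb{R}^2)$, which has the same trace on $\partial\Omega^n$ and whose weak $H^1(\Omega)$ limit is identified with $R_\lambda\phi$ through the strong $L^2$ convergence. The paper sidesteps the issue entirely: it applies Proposition \ref{rr} only to \emph{fixed} test functions to obtain convergence of the Revuz measures $\mu_n^+$ and $\widetilde{\mu_n}^+$, and it imports the pointwise convergence of the potentials from the already established semigroup convergence \eqref{conv-semi-Gen}, rather than dualizing the resolvent onto the test function.

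A second omission: both your pairing identity $\langle R^n_\lambda\mu_n^+,\phi\rangle=\int_{\partial\Omega^n}R^n_\lambda\phi\,\mu_n^+(dy)$ and the concluding uniqueness-via-Revuz step are only available for smooth measures of finite energy, and you never verify this (asserting it ``by construction'' and via the non-trap property is not enough). The paper devotes the first part of its proof to exactly this point: by Cauchy--Schwarz, the trace inequality $\|v\|^2_{L^2(\partial\Omega^n)}\leq C\sigma_n^{-1}\|v\|^2_{H^1(\mathbb{R}^2)}$ of \cite[Theorem 8.1]{CV-asy} and the extension theorem yield $\langle v,\mu_n^+\rangle\leq\sqrt{C}\,\|v\|_{H^1(\Omega^n_\varepsilon)}$, i.e.\ $\mu_n^+\in S_0$, and then $\mu_n^+\in S_{00}$ from boundedness of the potential; note that the factor $\sigma_n$ built into $\mu_n^+$ is exactly what makes this bound uniform in $n$. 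On the fractal side, the existence of a PCAF of the limit process with Revuz measure proportional to $\mu_\alpha$ is itself nontrivial and rests on \cite{BBC}. Finally, two smaller inaccuracies: the symmetry underlying your duality identity is with respect to the skew measure $\mathfrak{m}^n_\varepsilon$, not $m^n_\varepsilon$, so the constants $2c_n(1+c_n)^{-1}$ must be tracked through that substitution; and your limit argument is written only for $c_n\to c_0\in(0,\infty)$, whereas the statement (and the paper's proof, via $c_\infty\in[0,\infty]$) must cover the degenerate cases as well.
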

\begin{proof}
First we notice that $\mu_n$ has finite energy integral. Indeed, 
\begin{align*}
\int_{\Omega^*} v(x)\,\mu_n^+(dx) \leq \left( \mu_n^+(\partial \Omega^n) \right)^{1/2}\, \| v \|_{L^2(\partial \Omega^n)} = 2 (1+c_n)^{-1} c_n \sqrt{\sigma_n} \| v \|_{L^2(\partial \Omega^n)}.
\end{align*}
From \cite[Theorem 8.1]{CV-asy} we know that
\begin{equation*}
\| v \|^2_{L^2(\partial \Omega^n)} \leq \frac{C}{\sigma_n} \| v \|^2_{H^1(\mathbb{R}^2)}
\end{equation*}
where $C$ is independent of $n$. Since $(1+c_n)^{-1} c_n \leq 1$, by extension theorem (see \cite[Theorem A.3]{CV-asy}) we obtain that $\langle v, \mu_n^+ \rangle \leq \sqrt{C}\, \|v\|_{H^1(\Omega^n_\varepsilon)}$ and $\mu_n^+ \in S_0$. Since, under \eqref{555},  $R^n_\lambda \mu_n^+(x)$ is bounded (and in view of \cite[Lemma 4.1.5]{chen-book}) we have that $\mu_n^+ \in S_{00}$. Let $c_\infty\in [0, \infty]$ be such that $c_n \to c_\infty$. From \eqref{cnH}, for all $f \in \mathcal{B}_+$ we get that 
\begin{align*}
\langle f, \mu_n^+ \rangle  \to 2(1+c_\infty)^{-1} c_\infty \int_{\partial \Omega} f\,d\mu \quad \textrm{as} \quad n \to \infty.
\end{align*}
For a fixed $n$, consider the occupation measure \eqref{ThmA}. In view of \eqref{first-loc-def}, \eqref{localT-corresp}, \eqref{ell-view} and \eqref{ell-view-2}, we write
\begin{align*}
\mathbb{E}^n_x \left[ \int_0^{t \wedge \tau_{\Omega^n_\varepsilon}} f(\widetilde{B^{\nu(n),*}_s}) d\widetilde{A^{n+}_s} \right] = & 2\nu(n) \, \mathbb{E}^n_x \left[  \int_0^{t \wedge \tau_{\Omega^n_\varepsilon}} f(\widetilde{B^{\nu,*}_s}) d\Gamma^{\Sigma^n}_s + \int_0^{t \wedge \tau_{\Omega^n_\varepsilon}} f(\widetilde{B^{\nu,*}_s}) dL^{\partial \Omega^n}_s  \right]
\end{align*}
where $\Gamma^{\Sigma^n}_t = \Gamma^{\Sigma^n}_t(B^w)$. Set $U_n^\lambda f(x) = \mathbb{E}^n_x \left[ \int_0^\infty e^{-\lambda t - \delta_n t} f(\widetilde{B^{\nu, *}_t}) \, \mathbf{1}_{(t < \zeta^{\Omega^n_\varepsilon})} d\widetilde{A^{n+}_t}  \right]$. From \eqref{Rev-Corr-2} and the fact that $\mathbb{E}^n_x[\mathbf{1}_{(0 < \zeta^{\Omega^n_\varepsilon})}]= \mathbb{P}^n_x ( \zeta^{\Omega^n_\varepsilon} > 0) = 1$ for all $x \in \Omega^n_\varepsilon$ we obtain 
\begin{align}
\lim_{\lambda \to \infty} \langle \lambda U_n^\lambda f, m^n_\varepsilon \rangle = & \langle f, \widetilde{\mu_n}^+ \rangle, \quad \forall\, f \in \mathcal{B}_+ \label{lim-prop-R}
\end{align}
where $\widetilde{\mu_n}^+(dx) = 2\nu(n)(\mathbf{1}_{\Sigma^n} + \sigma_n \mathbf{1}_{\partial \Omega^n}) \,m^n_\varepsilon(dx)$ on $\Omega^n_\varepsilon \setminus \Omega^n$ 
and therefore, by \cite[Theorem 5.1.4]{FUK-book} there exists a unique PCAF of $B^{\nu,*}_t$ in Revuz correspondence with $\widetilde{\mu_n}^+$, that is  \eqref{ThmA}.  Notice also that, since $\nu(n) \leq 1$, we can follow the same arguments as before in order to see that $\widetilde{\mu_n}^+ \in S_{0}$. Let us consider $\xi_n \in \Sigma^n$ and $0 \leq M_n < \infty$. Since condition \eqref{nu-n} holds true, we get that
\begin{align*}
\lim_{n \to \infty} \langle f, \widetilde{\mu_n}^+ \rangle = \lim_{n \to \infty} \left( 2 M_n \frac{c_n w^n(\xi_n)}{1+ c_n w^n(\xi_n)} f(\xi_n) + 2\frac{c_n}{1+c_n} \sigma_n \int_{\partial \Omega^n} f d \mathfrak{s} \right)
\end{align*}
and therefore, from Proposition \ref{rr} and \eqref{general-cw},
\begin{align*}
\langle f, \widetilde{\mu_n}^+ \rangle \to 2(1+c_\infty)^{-1} c_\infty \int_{\partial \Omega} fd\mu, \quad as\ n\to \infty
\end{align*}
where $\mu=\mu_\alpha$ is the only Borel measure which is in Revuz correspondence with the ''symmetric'' local time $L^{\partial \Omega}_t$. Indeed, according with \eqref{Rev-Corr} and the setting in \eqref{D-Form-General} with $E=\overline{\Omega}$, we have that
\begin{align*}
\lim_{\lambda \to \infty} \lambda \mathbb{E}_{m } \left[ \int_0^\infty e^{-\lambda t} f(B_t) dL^{\partial \Omega}_t \right] = & \int_{\partial \Omega} fd\mu. 
\end{align*}
Notice that
\begin{equation*}
R^n_\lambda \mu_n^+(x) \to 2 (1+c_\infty)^{-1}c_\infty \, \mathbb{E}_x \left[ \int_0^{\zeta^{\Omega}} e^{-\lambda t - \delta_0 t}\mathbf{1}_{\partial \Omega}(B^+_t)dt \right] < \infty
\end{equation*}
according with \eqref{conv-semi-Gen}. From the one to one correspondence between \eqref{ThmA} and its Revuz measure $\widetilde{\mu_n}^+$ we prove \eqref{prop-last-eq} and the claim.
\end{proof}

We also focus on the PCAF
\begin{align}
\label{ThmAmeno}
\widetilde{A^{n-}_t} = \Gamma^{\Omega^n \setminus D^n}_t(\widetilde{B^{\nu,*}}) + L^{\partial \Omega^n-}_t (\widetilde{B^{\nu,*}}) = \Gamma^{\Lambda_n}_t(\widetilde{B^{\nu,*}})   
\end{align}
where $D^n_j$ is an increasing  sequence of open sets such that  $D^n=\cup_j D^n_j \to \Omega$ and $\Lambda_n = \overline{\Omega^n} \setminus D_n $. Let us write
\begin{align*}
R^n_\lambda \mu_n^-(x) = & 2 (1+c_n)^{-1} \, \sigma_n\, \mathbb{E}^n_x \left[ \int_0^{\zeta^{\Omega^n_\varepsilon}} e^{-\lambda t - t \delta_n}\mathbf{1}_{\partial \Omega^n}(\widetilde{B^{\nu(n),*}_t})dt \right]
\end{align*}
where $\mu_n^-(dx) = 2 (1+c_n)^{-1} \sigma_n \, \mathbf{1}_{\partial \Omega^n}(x)\, m^n_\varepsilon(dx)$. Let $A^{n-}_t \in \mathbf{A}^+_c$ be in Revuz correspondence with the measure $\mu_n^-$.

\begin{proposition}
\label{prop-lastt}
Under \eqref{555}, the boundary local time $\{L^{\partial \Omega-}_t, \, t < \zeta^{\Omega}\}$ is the unique PCAF such that, for any $x \in \Omega^n_\varepsilon$,
\begin{equation}
R^n_\lambda \mu_n^-(x) \to \mathbb{E}_x \left[ \int_0^{\zeta^\Omega} e^{-\lambda t - \delta_0 t} dL^{\partial \Omega -}_t \right], \quad \textrm{as} \; n \to \infty.
\end{equation}
\end{proposition}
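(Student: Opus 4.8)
The plan is to mirror the proof of Proposition \ref{prop-last} verbatim, replacing every exterior (``$+$'') object by its interior (``$-$'') counterpart. The two arguments are structurally identical; the substitutions are: the density factor $2(1+c_n)^{-1}c_n$ of $\mu_n^+$ becomes $2(1+c_n)^{-1}$ in $\mu_n^-$; the occupation $\widetilde{A^{n+}_t}=\Gamma^{\Omega^n_\varepsilon\setminus\Omega^n}_t$ is replaced by $\widetilde{A^{n-}_t}=\Gamma^{\Lambda_n}_t$ from \eqref{ThmAmeno}; the transmission weight $2\nu(n)$ is replaced by $2(1-\nu(n))$, consistently with the left local time relation $L^{\partial\Omega^n-}_t=2(1-\nu(n))L^{\partial\Omega^n}_t$ in \eqref{def-loc2}; and the exterior kernel $p^{w,*}$ is replaced by the kernel $p^{*}$ of the reflected motion $B^{*}$ on $\overline{\Omega^n}$.

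First I would verify that $\mu_n^-\in S_0$. Writing $\langle v,\mu_n^-\rangle = 2(1+c_n)^{-1}\sigma_n\int_{\partial\Omega^n} v\,d\mathfrak{s}$ and combining Cauchy--Schwarz with the trace estimate $\|v\|_{L^2(\partial\Omega^n)}^2\le C\sigma_n^{-1}\|v\|_{H^1(\mathbb{R}^2)}^2$ of \cite[Theorem 8.1]{CV-asy} and the extension theorem \cite[Theorem A.3]{CV-asy}, the two powers of $\sigma_n$ cancel; since here $(1+c_n)^{-1}\le 1$ (so the argument is in fact slightly easier than in Proposition \ref{prop-last}, where the factor $c_n$ appeared), one obtains $\langle v,\mu_n^-\rangle\le\sqrt{C}\,\|v\|_{H^1(\Omega^n_\varepsilon)}$ uniformly in $n$, hence finite energy integral. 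Boundedness of $R^n_\lambda\mu_n^-$ under \eqref{555} then upgrades this to $\mu_n^-\in S_{00}$ via \cite[Lemma 4.1.5]{chen-book}.

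Next I would pass to the limit of the masses. With $c_n\to c_\infty\in[0,\infty]$, Proposition \ref{rr} (formula \eqref{cnH}) gives, for every $f\in\mathcal{B}_+$,
\[
\langle f,\mu_n^-\rangle \;\longrightarrow\; 2(1+c_\infty)^{-1}\int_{\partial\Omega} f\,d\mu ,
\]
where $2(1+c_n)^{-1}\to 2(1+c_\infty)^{-1}=2(1-\nu_\infty)$ and $\nu_\infty=c_\infty/(1+c_\infty)$ is the boundary limit of $\nu(n)$ dictated by \eqref{cond-nu} together with $w^n|_{\partial\Omega^n}=1$. To identify the limiting PCAF I would invoke \eqref{ThmAmeno} and the occupation-density identities \eqref{first-loc-def}, \eqref{localT-corresp}, \eqref{ell-view}, \eqref{ell-view-2}, decomposing the interior occupation into a volume contribution on the collar $\Omega^n\setminus D^n$ plus the boundary contribution $L^{\partial\Omega^n-}_t=2(1-\nu(n))L^{\partial\Omega^n}_t$. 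The Revuz correspondence \eqref{Rev-Corr}--\eqref{Rev-Corr-2}, applied exactly as in Proposition \ref{prop-last} to the measure $\widetilde{\mu_n}^-(dx)=2(1-\nu(n))(\mathbf{1}_{\Omega^n\setminus D^n}+\sigma_n\mathbf{1}_{\partial\Omega^n})\,m^n_\varepsilon(dx)$ of $\widetilde{A^{n-}}$, together with the one-to-one PCAF/measure correspondence \cite[Theorem 5.1.4]{FUK-book}, forces the limit functional to be the unique PCAF with Revuz measure $2(1-\nu_\infty)\mu$, namely $L^{\partial\Omega-}_t$, yielding the convergence \eqref{conv-semi-Gen}.

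The one genuinely different step from Proposition \ref{prop-last} is showing that the volume part $\Gamma^{\Omega^n\setminus D^n}_t$ does not survive in the limit. In the ``$+$'' case this was automatic because the fiber $\Sigma^n$ has vanishing thickness. Here the reflected motion $B^{*}$ spends a \emph{positive} amount of time throughout the interior $\Omega^n$, so the vanishing must instead come from the collar $\Omega^n\setminus D^n$ having vanishing Lebesgue measure as $D^n\uparrow\Omega$; the Gaussian bound \eqref{Gbound} and the ensuing estimate \eqref{up-bound} provide the uniform control needed to let $\langle f,\widetilde{\mu_n}^-\rangle$ shed its interior contribution, leaving only the boundary mass $2(1-\nu_\infty)\mu$. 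Arranging the exhaustion $D^n\uparrow\Omega$ so that $\meas(\Omega^n\setminus D^n)\to 0$ jointly with $\Omega^n\uparrow\Omega$ is where care is required, and I expect this to be the main technical obstacle of the argument.
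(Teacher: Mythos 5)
Your proposal is correct and takes essentially the same route as the paper: the paper's own proof of Proposition \ref{prop-lastt} consists precisely of the instruction to repeat the proof of Proposition \ref{prop-last} with \eqref{ThmAmeno} in place of \eqref{ThmA} and the density $\ell^{n-}_t(y)=2(1-\nu(n))\sigma_n\ell^n_t(y)$, so that $\widetilde{\mu_n}^-$ becomes the Revuz measure of $\widetilde{A^{n-}_t}$. Your extra step showing that the collar contribution $\Gamma^{\Omega^n\setminus D^n}_t$ vanishes (via \eqref{up-bound} and $\mathop{\rm meas}(\overline{\Omega^n}\setminus D^n)\to 0$) is a detail the paper leaves implicit rather than a different method, and it correctly fills in the one point where the $``-"$ case does not reduce verbatim to the $``+"$ case.
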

\begin{proof}
We basically follows the proof of Proposition \ref{prop-last}. Indeed, we can write the left local time by considering \eqref{ThmAmeno} instead of \eqref{ThmA} and the density $\ell^{n-}_t(y) = 2(1-\nu(n)) \sigma_n \ell^n_t(y)$ as indicated in Section \ref{subsection:localtime}. We get that $\widetilde{\mu_n}^-(dx) = 2(1-\nu(n))\sigma_n m^n_\varepsilon(dx)$ is the Revuz measure of $\widetilde{A^{n-}_t} $. The result follows from the previous proof of Proposition \ref{prop-last}.
\end{proof}

From the previous results we have that $\mu_n^+$ is the Revuz measure of 
\begin{align*}
A^{n+}_t = 2\frac{c_n}{1+c_n} \sigma_n \int_{\partial \Omega^n} \ell^n_t(y)m^n_\varepsilon(dy) = 2 \frac{c_n}{1+c_n} L^{\partial \Omega^n}_t
\end{align*}
and $\mu_n^-$ is the Revuz measure of
\begin{align*}
A^{n-}_t = 2\frac{1}{1+c_n} \sigma_n \int_{\partial \Omega^n} \ell^n_t(y)m^n_\varepsilon(dy) = 2 \frac{1}{1+c_n} L^{\partial \Omega^n}_t.
\end{align*}
As we can immediately see $(A^{n+}_t + A^{n-}_t)/2 = L^{\partial \Omega^n}_t$. Nevertheless, $A^{n+}_t$ and  $A^{n-}_t$ behave respectively like $L^{\partial \Omega^n +}_t$ and $L^{\partial \Omega^n -}_t$ only as $n\to \infty$.
As we observed in Section \ref{subsection:localtime}, the local time $L^{\partial \Omega^n}_t$ of the  process $B^{\nu,*}_t$ is given by  
\begin{align}
L^{\partial \Omega^n}_t = \sigma_n \int_{\partial \Omega^n} \ell^n_t(y)\, m^n_\varepsilon(dy) . \label{localT-corresp-ext}
\end{align}
According with Proposition \ref{rr}, we now study the convergence of \eqref{localT-corresp-ext}  to 
\begin{align}
\label{localT-fract}
L^{\partial \Omega}_t= \int_{\partial \Omega} \ell_t(y)\, d\mu  
\end{align}
with $L^{\partial \Omega}_t \in \mathbf{A}^+_c$ (see for example \cite{BBC} for the existence and other properties of \eqref{localT-fract}). The connection between tightness on the line and continuity of the limit process has been pointed out starting from \cite{AldousI, AldousII}.
The convergence in law of \eqref{localT-corresp-ext} is proved in the following Proposition \ref{Prop-tight} and Proposition \ref{prop-L-w}. We begin with the following result concerning the tightness of the sequence $L^{\partial \Omega^n}_t$, $t < T$.

\begin{proposition} \label{Prop-tight}
The sequence $\{ L^{\partial \Omega^n}_t\}_{n}$ is tight in $C([0,T], [0, \infty))$.
\end{proposition}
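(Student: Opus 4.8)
The plan is to verify the Kolmogorov--Chentsov moment criterion \eqref{mom-criterion} for the sequence $X^n_t:=L^{\partial \Omega^n}_t$. Since each $L^{\partial \Omega^n}$ is a PCAF we have $X^n_0=L^{\partial \Omega^n}_0=0$ automatically, so the only thing to produce is a bound of the form $\mathbb{E}[\,|L^{\partial \Omega^n}_t-L^{\partial \Omega^n}_s|^{p}\,]\le C\,|t-s|^{\beta+1}$ holding uniformly in $n$ and in $0\le s\le t\le T$, with $p,\beta,C>0$ (here $p$ plays the role of the moment exponent in \eqref{mom-criterion}, not to be confused with the Koch parameter $\alpha$).

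First I would reduce increments of the local time to its value near the time origin. Because $L^{\partial \Omega^n}$ is a continuous additive functional of the (non-killed) reflected process $\widetilde{B^{\nu,*}}$, property A.2 gives $L^{\partial \Omega^n}_t-L^{\partial \Omega^n}_s=L^{\partial \Omega^n}_{t-s}\circ\theta_s$ for $s\le t$, whence, by the Markov property, $\mathbb{E}^n_z[(L^{\partial \Omega^n}_t-L^{\partial \Omega^n}_s)^{p}]\le\sup_{w}\mathbb{E}^n_w[(L^{\partial \Omega^n}_{t-s})^{p}]$. It therefore suffices to bound $\sup_w\mathbb{E}^n_w[(L^{\partial \Omega^n}_u)^{p}]$ for $u=t-s$. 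Writing $\bar g_n(u):=\sup_w\mathbb{E}^n_w[L^{\partial \Omega^n}_u]$ and expanding the $p$-fold time-ordered integral of the measure $dL^{\partial \Omega^n}$, a standard Khas'minskii-type iteration (conditioning successively on the innermost time via the Markov property) gives $\sup_w\mathbb{E}^n_w[(L^{\partial \Omega^n}_u)^{p}]\le p!\,\bar g_n(u)^{p}$. Thus everything is reduced to a \emph{first-moment} estimate on $\bar g_n(u)$, uniform in $n$.

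The heart of the argument is then to show $\bar g_n(u)\le C\sqrt{u}$ for $u\in[0,T]$, with $C$ independent of $n$. By the Revuz correspondence \eqref{Rev-Corr-2} and \eqref{localT-corresp-ext}, the measure of $L^{\partial \Omega^n}$ is $\sigma_n\,\mathfrak{s}$ on $\partial\Omega^n$, so $\mathbb{E}^n_w[L^{\partial \Omega^n}_u]=\sigma_n\int_0^u\!\int_{\partial\Omega^n}p^{\nu}(r,w,y)\,\mathfrak{s}(dy)\,dr$. Inserting the $n$-uniform Gaussian upper bound \eqref{Gbound} and decomposing $\partial\Omega^n$ into dyadic annuli around $w$, the spatial integral $\sigma_n\int_{\partial\Omega^n}e^{-d^2(w,y)/(c_4 r)}\mathfrak{s}(dy)$ is controlled by the $(\sigma_n\mathfrak{s})$-measure of balls. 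Here I would use that, by self-similarity and the rescaling underlying Proposition \ref{rr}, the measure $\sigma_n\mathfrak{s}$ is Ahlfors $d_f$-regular ($d_f=\log4/\log\alpha\in(1,2)$) uniformly in $n$ at scales $\rho\ge\alpha^{-n}$, while below the scale $\alpha^{-n}$ the pre-fractal is rectifiable and the integral behaves like $\sigma_n\sqrt{r}$. Splitting the $r$-integral at $r_n=\alpha^{-2n}$ and using the identity $\sigma_n=\alpha^{-n(d_f-1)}$, one obtains $\mathbb{E}^n_w[L^{\partial \Omega^n}_u]\le C\,u^{d_f/2}$ for $u\ge r_n$ and $\le C\,\sigma_n\sqrt{u}\le C\sqrt{u}$ for $u<r_n$; since $d_f/2>1/2$ and $u\le T$, both regimes give $\bar g_n(u)\le C\sqrt{u}$ uniformly in $n$.

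Combining the two reductions with the choice $p=4$ yields $\mathbb{E}^n_z[(L^{\partial \Omega^n}_t-L^{\partial \Omega^n}_s)^{4}]\le 4!\,\bar g_n(t-s)^{4}\le C\,|t-s|^{2}$, i.e.\ \eqref{mom-criterion} with moment exponent $4$ and $\beta=1$, uniformly in $n$; the Kolmogorov--Chentsov criterion \cite{Kallenberg97} then gives tightness of $\{L^{\partial \Omega^n}\}_n$ in $C([0,T],[0,\infty))$. I expect the genuine obstacle to be the third step, namely proving the first-moment bound \emph{uniformly in} $n$: this forces me to establish (i) the heat-kernel bound \eqref{Gbound} with constants independent of $n$ (where the non-trap, inner-uniform character of the pre-fractal domains is used) and (ii) the $n$-uniform $d_f$-regularity of $\sigma_n\mathfrak{s}$ across the transition scale $\alpha^{-n}$. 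Note finally that the naive choice $p=2$ fails: at fixed large $n$ and $u\downarrow0$ the local time behaves one-dimensionally, giving $\mathbb{E}^n[(L^{\partial \Omega^n}_u)^{2}]\sim\sigma_n^2\,u$ with exponent only $1$, so a higher moment ($p\ge4$) is essential to absorb the small-time, fine-scale regime into a single uniform power of $|t-s|$.
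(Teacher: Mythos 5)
Your proposal is correct in substance and shares the paper's overall skeleton (a Kac--Khas'minskii moment expansion combined with the Markov property and the Kolmogorov--Chentsov criterion \eqref{mom-criterion}), but it replaces the paper's key quantitative input with a genuinely different one. The paper expands the $k$-th moment of the increment exactly as you do, see \eqref{momentK-localT-1}--\eqref{momentK-localT}, but then feeds in its pre-established first-moment bound \eqref{bound-tight-RN}, which is \emph{linear} in $(t-s)$ and carries the factor $\sigma_n m^n_\varepsilon(\Lambda_n)\le C$; this yields \eqref{mom-crit-1}, i.e.\ $\mathbb{E}_x[|L^{\partial\Omega^n}_t-L^{\partial\Omega^n}_s|^k]\le c_1|t-s|^k$ for every $k>1$, so any $k>1$ closes the argument. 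That linear bound is coherent in the paper's setting because there the boundary local time is realized through \eqref{localT-corresp} as an occupation functional of the $m^n_\varepsilon$-symmetric (sticky) process, so increments are automatically dominated by $|t-s|$, and uniformity in $n$ needs only the total-mass normalization $\sigma_n\mathfrak{s}(\partial\Omega^n)\le C$ built into Proposition \ref{rr} --- no ball-regularity is invoked. You instead treat $L^{\partial\Omega^n}$ as the PCAF with Revuz measure $\sigma_n\mathfrak{s}$, singular in time, and prove from scratch the estimate $\sup_w\mathbb{E}^n_w[L^{\partial\Omega^n}_u]\le C\sqrt{u}$ via the Gaussian bound \eqref{Gbound} plus $n$-uniform Ahlfors $d_f$-regularity of $\sigma_n\mathfrak{s}$ across the crossover scale $\alpha^{-n}$. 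What your route buys: it is honest at the time origin --- the constant in \eqref{up-bound} is $C(t_1,t_2)$ and degenerates as $t_1\downarrow 0$, a point the paper glosses over when asserting \eqref{bound-tight-RN} uniformly down to $s=0$ --- and it makes the geometric mechanism behind uniformity in $n$ explicit (your identity $\sigma_n=\alpha^{-n(d_f-1)}$ and the two-regime measure estimate are exactly right). What it costs: a weaker exponent $p/2$ instead of $k$, hence the need for higher moments, plus the burden of actually establishing the $n$-uniform heat-kernel bound for $p^\nu$ and the $n$-uniform regularity of $\sigma_n\mathfrak{s}$, which you correctly flag; note that the paper's proof carries essentially the same burden through its appeal to \eqref{Gbound}, the domination $\mathbf{P}^D_t\le\mathbf{P}^N_t$ and \eqref{markov-p}. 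One small correction: $p\ge 4$ is not ``essential'' --- any $p>2$ works, e.g.\ $p=3$ gives exponent $3/2=\beta+1$ with $\beta=1/2$ in \eqref{mom-criterion}; what is essential, as you rightly observe, is only that $p=2$ fails since it produces exponent exactly $1$.
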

\begin{proof}
Let us consider the sets $\Lambda_n \subseteq \partial \Omega^n$. The process $L^{\Lambda_n}_t,\, t < T$ is a continuous additive functional of zero energy (\cite[pag. 149]{chen-book}) and $\mathbb{E}_x[|L^{\Lambda_n}_t|,\, t < T]< \infty$ q.e. $x$. Indeed, from \eqref{def-loc2} and \eqref{up-bound}, we have that
\begin{align}
\mathbb{E}^n_x\left[ \int_s^t dL^{\Lambda_n-}_u \right] + \mathbb{E}^n_x\left[ \int_s^t dL^{\Lambda_n+}_u \right] = & 2 \mathbb{E}^n_x\left[ \int_s^t dL^{\Lambda_n}_u \right]  \notag \\
\leq & \, const \cdot (t-s) \cdot  \sigma_n \int_{\Lambda_n} m^{n}_\varepsilon(dy) . \label{bound-tight-RN}
\end{align}
We have that $L^{\Lambda_n}_0=0$ for all $n$ and, for $k>1$ and $c_1 >0$,
\begin{equation}
\mathbb{E}_x [ | L^{\Lambda_n}_t - L^{\Lambda_n}_s |^k ] \leq \sigma_n\, m^n_\varepsilon(\Lambda_n)\, c_1\, |t-s|^k < c_1\, |t-s|^k. \label{mom-crit-1}
\end{equation}
Indeed, for $s<t$,
\begin{align}
\mathbb{E}_x \left[ \int_s^t dL^{\Lambda_n}_z  \right]^k = & k! \int_s^{z_1} \ldots \int_{z_{k-1}}^t \mathbb{E}_x\left[\mathbf{1}_{\Lambda_n}(B^{\nu(n), *}_{z_1}) \ldots \mathbf{1}_{\Lambda_n}(B^{\nu(n), *}_{z_k}) \right] dz_1 \ldots dz_k \label{momentK-localT-1}\\
\leq & k! \int_s^t \ldots \int_s^t \mathbb{E}_x\left[\mathbf{1}_{\Lambda_n}(B^{\nu(n), *}_{z_1}) \ldots \mathbf{1}_{\Lambda_n}(B^{\nu(n), *}_{z_k}) \right] dz_1 \ldots dz_k. \label{momentK-localT}
\end{align}
We recall \eqref{Gbound} and the fact that $\mathbf{P}_t^D \leq  \mathbf{P}_t^N$ a.e., the transition function of the reflecting BM dominates that of the absorbing BM. From \eqref{markov-p} and \eqref{bound-tight-RN} we can write  \eqref{mom-crit-1}. Since $k>1$, a Kolmogorov-type criterion shows the tightness.
\end{proof}

Since $\widehat{\zeta^{\Omega^n}} \leq \zeta^{\Omega^n_\varepsilon}$ we can write $\mathbb{P}_x(\zeta^{\Omega^n_\varepsilon} \leq t) \leq  \mathbb{P}_x(\widehat{\zeta^{\Omega^n}} \leq t) =  \mathbb{P}_x(\zeta^n \leq L_t^{\partial \Omega^n}) = 1 - \mathbb{E}_x e^{-c_n L_t^{\partial \Omega^n}}$ and, equivalently $\mathbb{P}_x(\zeta^{\Omega^n_\varepsilon} > t) \geq  \mathbb{E}_x e^{-c_n L_t^{\partial \Omega^n}}$. Assume that $u$ is the bounded solution to $\partial_t u = \mathcal{L}_n\, u - \widehat{\kappa_n}(x) u$ with $u_0=1$ in $H^1(\mathbb{R}^2) \cap L^2(\mathbb{R}^2, \sigma_n \mathbf{1}_{\partial \Omega^n} d\mathfrak{s})$ where $\widehat{\kappa_n} = c_n \mathbf{1}_{\partial \Omega^n}$. Then,  as $t\to 0$, $t^{-1}(1 - u(x,t)) \to  - \mathcal{L}_n\, u_0 + \widehat{\kappa_n}(x) u_0 =  \widehat{\kappa_n}(x)$ gives the killing rate which can be also obtained as (\cite{BluGet68}) 
$$\lim_{t \to 0} t^{-1}\mathbb{P}_x (B^{\nu, *}_t \textrm{ is killled in the time interval } (0,t]).$$ 
Since for $A_t=\int_0^t f(X_s)ds$, $de^{A_t}/dt = f(X_t)e^{A_t}$ and  $e^{A_t} - 1 = \int_0^t f(X_s) e^{A_s} ds$, we get that
\begin{align*}
\lim_{t\to 0} t^{-1} (1 - \mathbb{E}_x e^{-c_n L_t^{\partial \Omega^n}}) = \lim_{t \to 0} \mathbb{E}_x \left[ c_n  \mathbf{1}_{\partial \Omega^n}(B^{\nu,*}_t) \right] =  \widehat{\kappa_n}(x).
\end{align*}
Then, $u(x,t)=\mathbb{P}_x^n(\widehat{\zeta^{\Omega^n}}>t)$. We use the symbol $\widehat{\kappa_n}$ in order to underline the connection with the transition semigroup
\begin{align}
\widehat{\mathbf{P}^n_t}f(x) := \mathbb{E}^n_x[f(\widetilde{B^{\nu(n),*}_t}); t < \widehat{\zeta^{\Omega^n}}] ) =  \mathbb{E}^n_x[e^{-\widehat{A^n_t}} f(B^{\nu(n),*}_t) ] \label{semig-estim}
\end{align} 
with resolvent $\widehat{R^n_\lambda}$ where $\widehat{\zeta^{\Omega^n}}$ is defined as in \eqref{def-life-sequence}. For $\delta_n$ identically zero, 
$$\widehat{\mathbf{P}^n_t} \mathbf{1}_{\Omega^n_\varepsilon}(x) = 
\mathbb{E}_x[\mathbf{1}_{\Omega^n_\varepsilon}(\widetilde{B^{\nu(n),*}_t}); t < \widehat{\zeta^{\Omega^n}}] = \mathbb{P}^n_x (\widehat{\zeta^{\Omega^n}} > t).$$ 
If $\delta_n >0$ we have that $\widehat{\mathbf{P}^n_t} \mathbf{1}_{\Omega^n_\varepsilon}(x) = e^{-\delta_n t} \, \mathbb{P}^n_x (\widehat{\zeta^{\Omega^n}}>t)$.

The heat equation solution with Robin boundary conditions has been studied using a Feynamn­-Kac formula and a theorem of Ray and Knight on Brownian local time in \cite{BBC}. In \cite{FitzPit99}, the authors reviewed Kac's method by underlining the connection between the higher-order moments  of $A_\kappa= \int_0^T \kappa(X_t)dt$ and the Feynman-Kac formula. Thus, $\mathbb{E}_m[(A_\kappa)^k] = k! m G_\kappa^k \mathbf{1}$ where $G_\kappa w(x)=\int G(x,dy)\kappa(y)w(y)$ (here $m$ is an arbitrary initial distribution). In particular,   
\begin{align}
f(x)=\mathbb{E}_x \left[ \exp \int_0^T \kappa(B_t)dt \right]=\sum_{k=0}^\infty G^k_\kappa \mathbf{1}(x), \quad x \in \Lambda \label{hig-mom-FK-formula}
\end{align}
is finite if and only if is the minimal solution to $f(x) =  1 + \int \kappa(y)f(y)G(x,dy)$ where $\kappa: \Lambda \mapsto [0,\infty)$ is a measurable Borel function, $T=\tau_\Lambda$ and $G$ is the Green function of the killed BM on the boundary of the open set $\Lambda$. 

\begin{proposition}
\label{prop-L-w}
For $x \in \Omega^1_\varepsilon$, $ \forall\, t\geq 0$, 
\begin{align*}
\int_0^\infty e^{-c_n l} \, \mathbb{P}^n_x(L^{\partial \Omega^n}_t \in dl ) \to \int_0^\infty e^{-c_\infty l}\, \mathbb{P}_x( L^{\partial \Omega}_t \in dl), \quad \textrm{as } n \to \infty
\end{align*}
$m$-a.e. $x$, for every sequence $c_n \geq 0$ such that $c_n \to c_\infty \in [0, \infty]$.
\end{proposition}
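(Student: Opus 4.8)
The plan is to read the left-hand side as $\mathbb{E}^n_x[e^{-c_n L^{\partial \Omega^n}_t}]$ and the right-hand side as $\mathbb{E}_x[e^{-c_\infty L^{\partial \Omega}_t}]$, with the convention $e^{-\infty \cdot l} = \mathbf{1}_{\{l=0\}}$ so that the right-hand side reads $\mathbb{P}_x(L^{\partial \Omega}_t = 0)$ when $c_\infty = \infty$. I would then separate the convergence into its two independent mechanisms: the weak convergence of the fixed-time marginal $L^{\partial \Omega^n}_t$ towards $L^{\partial \Omega}_t$, and the scalar convergence $c_n \to c_\infty$. The three regimes $c_\infty \in (0,\infty)$, $c_\infty = 0$ and $c_\infty = \infty$ must be handled separately, since the limiting exponential $l \mapsto e^{-c_\infty l}$ is bounded and continuous only in the first two cases.

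First I would establish $L^{\partial \Omega^n}_t \stackrel{law}{\to} L^{\partial \Omega}_t$. Tightness of $\{L^{\partial \Omega^n}_\cdot\}$ in $C([0,T])$ is Proposition \ref{Prop-tight}, so along every subsequence there is a further subsequence converging in law to some continuous nondecreasing additive functional $L^\star_\cdot$. To identify $L^\star_\cdot = L^{\partial \Omega}_\cdot$ I would invoke the Revuz correspondence: by Proposition \ref{rr} the Revuz measures $\sigma_n \mathbf{1}_{\partial \Omega^n}\,d\mathfrak{s}$ converge to $\mu = \mu_\alpha$ on $\partial \Omega$, the semigroups converge by \eqref{conv-semi-Gen}, and by uniqueness of the PCAF associated with a given smooth Revuz measure (\cite[Theorem 5.1.4]{FUK-book}) the only admissible limit is the boundary local time $L^{\partial \Omega}$ of the limit reflecting BM $B^+$. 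Hence the subsequential limit is unique and the whole sequence converges, so in particular the time-$t$ marginals converge weakly on $[0,\infty)$. \emph{This identification on the fractal boundary is the main obstacle}, because the state spaces $\Omega^n_\varepsilon$ vary and $\partial \Omega$ is nonsmooth; it is precisely here that the non-trap property of the Koch domains and the convergence of Revuz measures carry the argument.

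Granting the weak convergence, the case $c_\infty \in (0,\infty)$ follows by a triangle inequality: $\mathbb{E}^n_x[e^{-c_\infty L^{\partial \Omega^n}_t}] \to \mathbb{E}_x[e^{-c_\infty L^{\partial \Omega}_t}]$ since $l \mapsto e^{-c_\infty l}$ is bounded continuous, while for $n$ large (so that $c_n \geq c_\infty/2$) one has $|e^{-c_n l} - e^{-c_\infty l}| \leq |c_n - c_\infty|\, l\, e^{-(c_\infty/2) l} \leq |c_n - c_\infty|\,\frac{2}{e c_\infty}$ uniformly in $l \geq 0$, which controls the error from replacing $c_\infty$ by $c_n$. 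The case $c_\infty = 0$ requires no weak convergence at all: from $1 - e^{-u} \leq u$ one gets $0 \leq 1 - \mathbb{E}^n_x[e^{-c_n L^{\partial \Omega^n}_t}] \leq c_n\, \mathbb{E}^n_x[L^{\partial \Omega^n}_t]$, and the first moment is bounded uniformly in $n$ by \eqref{bound-tight-RN} together with $\sigma_n \int_{\partial \Omega^n} d\mathfrak{s} \to \mu(\partial \Omega) < \infty$ (Proposition \ref{rr}), so this bound vanishes as $c_n \to 0$.

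The delicate regime is $c_\infty = \infty$, where $e^{-c_n l} \to \mathbf{1}_{\{l=0\}}$ is discontinuous at $l=0$ and weak convergence alone does not suffice. For the upper bound I would fix $\delta > 0$ and write $\mathbb{E}^n_x[e^{-c_n L^{\partial \Omega^n}_t}] \leq \mathbb{P}^n_x(L^{\partial \Omega^n}_t \leq \delta) + e^{-c_n \delta}$; the portmanteau theorem on the closed set $[0,\delta]$ gives $\limsup_n \mathbb{P}^n_x(L^{\partial \Omega^n}_t \leq \delta) \leq \mathbb{P}_x(L^{\partial \Omega}_t \leq \delta)$, and letting $\delta \downarrow 0$ yields $\limsup_n \leq \mathbb{P}_x(L^{\partial \Omega}_t = 0)$. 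For the matching lower bound I would keep only the zero-local-time part, $\mathbb{E}^n_x[e^{-c_n L^{\partial \Omega^n}_t}] \geq \mathbb{P}^n_x(L^{\partial \Omega^n}_t = 0) = \mathbb{P}_x(\tau_{\Omega^n} > t)$, using that for $m$-a.e. $x$ (interior to $\Omega^n$ for all large $n$) the event $\{L^{\partial \Omega^n}_t = 0\}$ coincides with $\{\tau_{\partial \Omega^n} > t\}$, on which $B^{\nu,*}$ is an ordinary BM absorbed at $\partial \Omega^n$. Since $\Omega^n \uparrow \Omega$, monotone convergence of the exit times gives $\mathbb{P}_x(\tau_{\Omega^n} > t) \uparrow \mathbb{P}_x(\tau_\Omega > t) = \mathbb{P}_x(L^{\partial \Omega}_t = 0)$, closing the sandwich and identifying the limit with the right-hand side, consistently with \eqref{conv-semi-D} and Theorem \ref{teo2}.
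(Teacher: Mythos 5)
Your proposal splits the work into (A) weak convergence of the fixed-time marginals $L^{\partial \Omega^n}_t \stackrel{law}{\to} L^{\partial \Omega}_t$ and (B) a three-regime analysis in $c_\infty$; part (B) is correct, but part (A), which you yourself flag as the main obstacle, contains a genuine gap. The Revuz correspondence (\cite[Theorem 5.1.4]{FUK-book}) asserts uniqueness of the PCAF \emph{of a fixed process} associated with a given smooth measure; to invoke it you must already know that your subsequential limit $L^\star$ is a positive continuous additive functional of the limit process $B^+$. Tightness in $C([0,T])$ plus Prokhorov only gives convergence of the \emph{laws} of $L^{\partial \Omega^n}$, and additivity/adaptedness to $B^+$ are not properties of a law: you would need joint convergence of the pairs $(B^{\nu,*}, L^{\partial \Omega^n})$ and a proof that a joint limit inherits the additive-functional structure and has Revuz measure $\mu$, none of which is established. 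As written, the identification step does not go through. The paper avoids exactly this difficulty by never taking subsequential limits of the local times: it uses Kac's moment formula \eqref{hig-mom-FK-formula} (cf.\ \cite{FitzPit99}) in the form \eqref{momentK-localT-1} with $s=0$, which expresses $\mathbb{E}^n_x\left[(L^{\partial \Omega^n}_t)^k\right]$ as an iterated time-integral of finite-dimensional distributions of $B^{\nu(n),*}$ weighted by $\sigma_n \mathbf{1}_{\partial \Omega^n}\, d\mathfrak{s}$; these expressions converge, by \eqref{markov-p-0} (i.e.\ semigroup convergence, Theorem \ref{Thm-M-conv-res}) together with Proposition \ref{rr}, to the corresponding Kac moments of $L^{\partial \Omega}_t$ for the limit semigroup and the measure $\mu$. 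Moment convergence, the tightness of Proposition \ref{Prop-tight}, and determinacy of the moment problem (available here since the bounds of type \eqref{mom-crit-1} give locally finite exponential moments) then yield the marginal weak convergence with the limit identified from the outset. Replacing your Revuz-uniqueness step by this moment computation — which uses only ingredients you already invoke — repairs the proof.

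The remainder of your argument is correct and is in fact more careful than what the paper prints. The paper passes from convergence of Laplace transforms at fixed $\lambda$ to the statement ``for every $c_n \geq 0$ and every $c_\infty \in [0,\infty]$'' in one line; your three-regime analysis supplies precisely what is missing there. In particular, for $c_\infty = \infty$ weak convergence alone really is insufficient (take $X_n = 1/c_n$ deterministic: $X_n \stackrel{law}{\to} 0$ while $\mathbb{E}[e^{-c_n X_n}] = e^{-1} \not\to 1$), and your sandwich — portmanteau on the closed sets $[0,\delta]$ for the upper bound, and $\mathbb{E}^n_x[e^{-c_n L^{\partial \Omega^n}_t}] \geq \mathbb{P}^n_x(L^{\partial \Omega^n}_t = 0) = \mathbb{P}_x(\tau_{\Omega^n} > t) \uparrow \mathbb{P}_x(\tau_\Omega > t) = \mathbb{P}_x(L^{\partial \Omega}_t = 0)$ for the lower bound — is exactly the extra input needed; these are also the ingredients the paper itself deploys later, in Step 2 of the proof of Theorem \ref{prop-exp}, where the decomposition over $\{t < \tau_{\Omega^n}\}$ and the convergence $\tau_{\Omega^n} \stackrel{law}{\to} \tau_\Omega$ appear. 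Your $c_\infty = 0$ case via $1-e^{-u} \leq u$ and the uniform first-moment bound \eqref{bound-tight-RN} (with $\sigma_n \int_{\partial\Omega^n} d\mathfrak{s}$ bounded by Proposition \ref{rr}) is likewise clean and bypasses weak convergence entirely. In short: keep your part (B), but replace the identification mechanism in part (A) by the paper's moment argument.
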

\begin{proof}
The higher-order moment of the boundary local time can be written as in formula \eqref{momentK-localT-1} with $s=0$. Then, by applying \eqref{markov-p-0} we have that $\mathbb{E}^n_x \left[( L^{\partial \Omega^n}_t )^k\right] \to \mathbb{E}_x \left[(L^{\partial \Omega}_t )^k\right]$ weakly (continuously on $t\geq 0$, $m$-a.e. $x$). Indeed, we have weak convergence of finite-dimensional distributions from Theorem \ref{Thm-M-conv-res} and  tightness from Proposition \ref{Prop-tight}. Consider the expansion \eqref{hig-mom-FK-formula}. We can also write $v_{\widehat{\kappa_n}}(x,t) = \mathbb{E}^n_x \exp - c_n L^{\partial \Omega^n}_{t}$ for $t \leq T_{c_n}$ in terms of \eqref{momentK-localT-1} with $s=0$. For $c_n=\lambda\geq 0$ $\forall\, n$, $v_{\widehat{\kappa_n}}$ is the Laplace transform of $\mathbb{P}^n_x(L^{\partial \Omega^n}_t \in dl)/dl$. Since, $\mathbb{P}_x^n \stackrel{w^\star}{\to} \mathbb{P}_x$, $m$-a.e. $x$ (from the convergence of moments), then the convergence holds for every $c_n \geq 0$ (and therefore, for every $c_\infty \in [0, \infty]$). 
\end{proof}

We can write $A^n_t = \int_0^t \kappa_n(\widetilde{B^{\nu(n),*}_s})ds$ in \eqref{semig-gen-A2} where $\kappa_n(\cdot)$ is the state dependent rate for the Markov killing time $T_{c_n}$. Thus, we can focus on the multiplicative functional $M^n_t = e^{-A^n_t}$ associated with the stopping time $T_{c_n}$ and therefore, with the sub-Markov semigroup \eqref{conv-semi-Gen}. In particular, \eqref{conv-semi-Gen} characterizes $M^n_t$ uniquely (\cite[Proposition 1.9]{BluGet68}). Also we write $\overline{M^n_t} = e^{-\overline{A^n_t}}$ (see formula \eqref{semig-gen-A2}). Notice that (for $\lambda \geq 0$)
\begin{align*}
R^n_\lambda f(x) = \mathbb{E}^n_x \left[ \int_0^\infty e^{-\lambda t} f(\widetilde{B^{\nu,*}_t})  \overline{M^n_t} dt \right] \quad \textrm{and} \quad \widehat{R^n_\lambda} f(x) = \mathbb{E}^n_x \left[ \int_0^{\tau_{\Omega^n_\varepsilon}} e^{-\lambda t-\delta_n t} f(\widetilde{B^{\nu,*}_t})  \widehat{M^n_t} dt \right]
\end{align*}
where $\widehat{M^n_t} = \exp (- \widehat{A^n_t})$ with $\delta_n, c_n$ as in \eqref{555} and $\widehat{A^n_t} = \int_0^t \widehat{\kappa_n}(\widetilde{B^{\nu(n),*}_s})ds =c_n L^{\partial \Omega^n}_t$. 

The fundamental principle in our investigation is to approximate $M$ by $\widehat{M}$ and transfer the properties known for the approximating functional.

\begin{theorem}
\label{prop-exp}
$\widehat{\zeta^{\Omega^n}} \stackrel{law}{\to} T_{c_\infty}$ and under \eqref{111}, $\widehat{R^n_\lambda} f_n \to R_\lambda f$ strongly in $L^2(\Omega)$.
\end{theorem}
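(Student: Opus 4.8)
The plan is to prove the two assertions in sequence: the convergence in law of the stopping times $\widehat{\zeta^{\Omega^n}}$ is obtained from a functional limit theorem for the boundary local times, and the strong resolvent convergence is then deduced from it together with the Mosco convergence of the energy forms. For the stopping times, the two preliminary results on $L^{\partial \Omega^n}_t$ must first be combined into a process-level statement. Proposition \ref{Prop-tight} gives tightness of $\{L^{\partial \Omega^n}_\cdot\}_n$ in $C([0,T],[0,\infty))$, while the convergence of moments established in the proof of Proposition \ref{prop-L-w} (formula \eqref{momentK-localT-1} read through \eqref{markov-p-0}), together with the finite dimensional convergence furnished by Theorem \ref{Thm-M-conv-res}, identifies every subsequential limit with $L^{\partial \Omega}_\cdot$. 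Hence $L^{\partial \Omega^n}_\cdot \stackrel{law}{\to} L^{\partial \Omega}_\cdot$ in $C([0,T],[0,\infty))$. Since the exponential threshold $\zeta^n$ is independent of $B^{\nu,*}$ and $\zeta^n \stackrel{law}{\to} \zeta^\infty$ by Remark \ref{rmk-life-boundary}, the pairs $(L^{\partial \Omega^n}_\cdot, \zeta^n)$ converge jointly in law to $(L^{\partial \Omega}_\cdot, \zeta^\infty)$, and applying the continuous mapping theorem to the first-passage map $(\ell,a) \mapsto \inf\{s>0: \ell(s)>a\}$ reproduces exactly the three regimes of Remark \ref{rmk-life-domain}, i.e. $\widehat{\zeta^{\Omega^n}} \stackrel{law}{\to} T_{c_\infty}$ with $T_{c_\infty}$ equal to $\tau_\Omega$, $\zeta^\Omega$, or $\infty$ according as $c_\infty=\infty$, $c_\infty \in (0,\infty)$, or $c_\infty=0$.

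For the resolvents I would first argue pointwise. Writing $\widehat{R^n_\lambda} f_n(x) = \int_0^\infty e^{-\lambda t}\, \widehat{\mathbf{P}^n_t} f_n(x)\, dt$ with $\widehat{\mathbf{P}^n_t}$ as in \eqref{semig-estim}, the task is to show $\widehat{\mathbf{P}^n_t} f_n(x) \to \mathbf{P}_t f(x)$ $m$-a.e. The reflected skew motions $\widetilde{B^{\nu(n),*}}$ converge to the reflecting BM $B^+$ on $\overline{\Omega}$ (the Neumann limit of $\widetilde{a_n}$), jointly with their boundary local times by the first step; the killing factor $e^{-c_n L^{\partial \Omega^n}_t}$ converges to $e^{-c_\infty L^{\partial \Omega}_t}$ by Proposition \ref{prop-L-w}; and the identity $\mathbb{P}_x(\zeta^{\Omega}>t\,|\,B^+)=e^{-c_\infty L^{\partial \Omega}_t}$ matches the limit with $\mathbf{P}_t f(x)=\mathbb{E}_x[e^{-\delta_0 t}f(B^+_t);\, t<T_{c_\infty}]$, cf. \eqref{conv-semi-Gen}. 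Because $\widehat{M^n_t}\le 1$ and the Gaussian bound \eqref{Gbound} yields the uniform estimate \eqref{up-bound}, dominated convergence lets me pass the limit under the $dt$-integral, giving $\widehat{R^n_\lambda} f_n \to R_\lambda f$ pointwise $m$-a.e.

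To upgrade this to strong $L^2(\Omega)$ convergence I would split
\begin{equation*}
\|\widehat{R^n_\lambda} f_n - R_\lambda f\|_{L^2} \le \|\widehat{R^n_\lambda}(f_n-f)\|_{L^2} + \|\widehat{R^n_\lambda} f - R_\lambda f\|_{L^2}.
\end{equation*}
The operators $\widehat{R^n_\lambda}$ are sub-Markovian resolvents, hence uniform contractions with $\|\widehat{R^n_\lambda}\|_{L^2 \to L^2} \le \lambda^{-1}$, so the first term is at most $\lambda^{-1}\|f_n-f\|_{L^2} \to 0$ by \eqref{111}. For the fixed-source term I would invoke the Mosco convergence of the perturbed forms $\widetilde{a_n}(\cdot,\cdot)+\langle \cdot,\cdot\rangle_{\widehat\mu_n}$, whose killing measures $\widehat\mu_n=c_n\sigma_n\mathbf{1}_{\partial\Omega^n}\,d\mathfrak{s}$ tend to $c_\infty\mu$ by Proposition \ref{rr}; this is the content of Theorems \ref{teo1m} and \ref{teo2}, and Theorem \ref{Thm-M-conv-res} converts it into strong resolvent convergence $\widehat{R^n_\lambda} f \to R_\lambda f$ in $L^2$.

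The delicate point, on which the first step rests, is the behaviour of the first-passage map. This map is continuous at $(\ell,a)$ only when $\ell$ crosses level $a$ without a flat spot; for the limit $L^{\partial \Omega}$ the set of levels admitting flat intervals is countable, and since $\zeta^\infty$ is independent of $L^{\partial \Omega}$ and (for $c_\infty\in(0,\infty)$) has a density, the limiting pair lands a.s. in the continuity set. Verifying this — equivalently, that $L^{\partial \Omega}$ increases strictly on its support so that the boundary is genuinely felt and $\tau_\Omega<\infty$ in the $c_\infty=\infty$ regime — is exactly where the non-trap property of the Koch domain enters, and together with the passage to the vague (locally infinite) limit of the killing measures in the Dirichlet case (Theorem \ref{teo2}) it constitutes the main obstacle of the argument.
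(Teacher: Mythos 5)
Your first claim (convergence in law of the stopping times) takes a genuinely different route from the paper: the paper never applies a continuous-mapping argument to the first-passage functional. Instead it proves $\tau_{\Omega^n}\stackrel{law}{\to}\tau_\Omega$ separately (Step 1 of its proof, via stable convergence of multiplicative functionals in the spirit of \cite{BaxterDalMasoMosco}), and then decomposes $\mathbb{P}^n_x(\widehat{\zeta^{\Omega^n}}>t)$ by conditioning on $\{t<\tau_{\Omega^n}\}$ versus $\{t\geq\tau_{\Omega^n}\}$, feeding Proposition \ref{prop-L-w} into the second term and treating the three regimes of $c_\infty$ from formula \eqref{eq-proof618}. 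Your route — joint convergence of $(L^{\partial\Omega^n}_\cdot,\zeta^n)$ by independence, then the map $(\ell,a)\mapsto\inf\{s:\ell(s)>a\}$ — is workable for $c_\infty\in(0,\infty)$ and $c_\infty=0$, where your flat-level argument is correct. But in the regime $c_\infty=\infty$ the threshold $a_n=\zeta^n\to 0$, and the first-passage map is genuinely discontinuous at level zero: both the threshold and the early accumulation of boundary local time of the pre-limit processes vanish, and their relative rates matter. What rules out a premature passage is that $L^{\partial\Omega^n}_t=0$ for $t<\tau_{\Omega^n}$ together with $\tau_{\Omega^n}\stackrel{law}{\to}\tau_\Omega$ — i.e., exactly the paper's Step 1, which you have deferred to an unresolved ``obstacle'' rather than proved.

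The genuine gap is in the resolvent part. Theorems \ref{teo1m} and \ref{teo2} are \emph{not} statements about the perturbed forms $\widetilde{a_n}(\cdot,\cdot)+\langle\cdot,\cdot\rangle_{\mu_{\widehat{A^n}}}$ of \eqref{formEdominated}: they assert $M$-convergence of the thin-layer functionals \eqref{A(n)2}, whose structure is the coefficient $a^n_\varepsilon$ on the two-dimensional fiber $\Sigma^n$ — that is, the forms associated with the \emph{unhatted} resolvents $R^n_\lambda$ (equivalently with the lifetime $\zeta^{\Omega^n_\varepsilon}$). The $M$-convergence of forms carrying the surface killing measure $c_n\sigma_n\mathbf{1}_{\partial\Omega^n}\,d\mathfrak{s}$, which is what governs $\widehat{R^n_\lambda}$, is proved nowhere in this paper (it is a Robin-asymptotics statement in the spirit of \cite{CAP2}); obtaining the strong convergence of $\widehat{R^n_\lambda}$ \emph{without} assuming it is precisely the content of the theorem. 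The paper bridges the gap by a transfer argument: $\widehat{\zeta^{\Omega^n}}\leq\zeta^{\Omega^n_\varepsilon}$ gives the domination $\widehat{R^n_\lambda}\mathbf{1}\leq R^n_\lambda\mathbf{1}$ and the contraction bound \eqref{contr-prop}; the known $M$-convergence of \eqref{A(n)2} gives $R^n_\lambda f\to R_\lambda f$ strongly; a.e. convergence (formula \eqref{conv-a.e.R-proof}) identifies the limit of $\widehat{R^n_\lambda}f$, whence weak $L^2$ convergence; the sandwich $\|R_\lambda f\|_{L^2(\Omega)}\leq\liminf_n\|\widehat{R^n_\lambda}f\|\leq\liminf_n\|R^n_\lambda f\|=\|R_\lambda f\|_{L^2(\Omega)}$ yields the norm convergence \eqref{norm-vn}, and weak convergence plus convergence of norms gives strong convergence, after which \eqref{111} and the contraction handle $f_n\to f$. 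By invoking Mosco convergence of the hatted forms you assume exactly what is to be proved; your argument for the fixed-source term is circular as written, although it could be repaired by replacing that invocation with the paper's domination-plus-norm (Radon--Riesz type) argument.
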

\begin{proof}
First we show that $\widehat{\zeta^{\Omega^n}} \stackrel{law}{\to} T_{c_\infty}$ as $c_n \to c_\infty$. Let us write $\zeta^n= \varsigma / c_n$ with $\mathbb{P}(\varsigma > x) = e^{-x}$, $x \geq 0$. Thus,  a.s. $\zeta^n \to \zeta^\infty$. Assume $\delta_n=0$ and $\lambda>0$. From $\widehat{\mathbf{P}^n_t}$ we write the associated resolvent 
\begin{align*}
\widehat{R^n_\lambda} \mathbf{1}_{\Omega^n_\varepsilon}(x) = \int_0^\infty e^ {-\lambda t} \widehat{\mathbf{P}^n_t} \mathbf{1}_{\Omega^n_\varepsilon}(x) dt = \int_0^\infty e^ {-\lambda t}\mathbb{P}^n_x(\widehat{\zeta^{\Omega^n}} > t)  dt
\end{align*}
and, from the resolvent \eqref{Res-n},
\begin{align*}
R_\lambda^n \mathbf{1}_{\Omega^n_\varepsilon}(x) = \int_0^\infty e^{-\lambda t} \mathbf{P}^n_t \mathbf{1}_{\Omega^n_\varepsilon}(x) dt= \int_0^\infty e^{-\lambda t} \mathbb{P}^n_x(\zeta^{\Omega^n_\varepsilon} >t) dt.
\end{align*}
Recall that \eqref{conv-semi-Gen} holds true. That is, from  M-convergence of forms (\cite{CV-asy}), we have that (Theorem \ref{Thm-M-conv-res})
\begin{align*}
R_\lambda^n \mathbf{1}_{\Omega^n_\varepsilon}(x)  \to \int_0^\infty e^{-\lambda t} \mathbb{P}_x(T_{c_\infty} > t) dt = R_\lambda \mathbf{1}_{\Omega}(x)
\end{align*}
strongly in $L^2(\Omega)$. The limit stopping time $T_{c_\infty}$ depends on $c_\infty$ and corresponds to the lifetime of the limit process. By construction we have that a.s. $\widehat{\zeta^{\Omega^\infty}} = T_{c_\infty}$ and
\begin{align}
\lim_{n\to \infty} R^n_\lambda \mathbf{1}_{\Omega^n_\varepsilon}(x) = \lim_{n\to \infty} \widehat{R^n_\lambda} \mathbf{1}_{\Omega^n_\varepsilon}(x), \quad \forall\, x \in \Omega^1_\varepsilon . \label{conv-a.e.R-proof}
\end{align}
Now we show convergence in law.

\emph{Step 1)} Let us consider a reflecting BM on $\Omega^n$, say $X^n$, and the first hitting time $\tau_{\Omega^n}$ on the pre-fractal boundary $\partial \Omega^n$. Let $X$ be a reflecting BM on $\Omega$ with $\Omega^n \uparrow \Omega$. We have that $X^n \stackrel{law}{\to} X$ as $n\to \infty$ (\cite[Theorem 4.4 and Remark 2]{ChenPTRF93}, \cite[Section 2]{BurdzyChenECP98}). Consider now the killed process $X^n_t, t < \tau_{\Omega^n}$ started at $x \in \Omega^1_\varepsilon$. Here we can follow the same arguments as in \cite[Theorem 4.1 and Theorem 3.3 ]{BaxterDalMasoMosco}. In particular, we consider the additive functional $A^n_t$ associated with $\tau_{\Omega^n}$ with Revuz measure $\mu_{A^n} = \infty_{D_n}$ where $D_n$ is the complement of $\Omega^n$. From stable convergence of multiplicative functionals we arrive at weak convergence of stopping times, that is $\tau_{\Omega^n} \stackrel{law}{\to} \tau_\Omega$. Therefore, we get a direct consequence of M-convergence of the associated form and tightness of $\mathbb{P}^n_x(\tau_{\Omega^n}>t)$ (indeed $\mathbb{E}_x^n \tau_{\Omega^n}$ is bounded by $\mathbb{E}^n_x \tau_{\Omega^1_\varepsilon}$ uniformly on $n$). Notice also that we arrive at the same result by considering Dirichlet condition on $\partial \Omega^n $ and the corresponding (killed) process on $\Omega^n$. The variational approach has been treated in \cite{CAP2}.
 
\emph{Step 2)} Now we focus on the family $\{\mathbb{P}^n_x\,;\, x \in \Omega^n_\varepsilon\}$. Let us write $X^{x,n}_t = \{ B^{\nu(n),*}_t$ started from $x \in \Omega^n_\varepsilon = \overline{\Omega^n} \cup \Sigma^n \}$ $m$-a.e.. We first observe that
\begin{align*}
\forall\, n, \quad X^{x,n}_t \in \partial \quad \textrm{ if and only if } \quad   (t > \zeta^{\Omega^n_\varepsilon}) \vee (X^{x,n}_0 \in \partial) .
\end{align*}
Since $x \in \overline{\Omega^1_\varepsilon} \setminus \Omega^n_\varepsilon \Rightarrow X^{x,n}_0 \in \partial$, for $n\to \infty$ we can consider starting points $x \in \Omega^1_\varepsilon$. Notice also that, in the Neumann case, the cemetery point is assumed to be $\partial = \{\emptyset\}$ and such that $m(\partial) = 0$. This corresponds to Cap$_1(\overline{\Omega^1_\varepsilon} \setminus \Omega^n) \to 0$. Thus, for $x \in \Omega^1_\varepsilon$ we write
\begin{align*}
\mathbb{P}^n_x(\widehat{\zeta^{\Omega^n}} >t ) = & \mathbb{P}^n_x(\widehat{\zeta^{\Omega^n}} >t , (t < \tau_{\Omega^n}) \cup (t \geq \tau_{\Omega^n}))\\
= & \mathbb{E}^n_x\left[ e^{-c_n L^{\partial \Omega^n}_t} \big| t < \tau_{\Omega^n} \right] \mathbb{P}^n_x(t < \tau_{\Omega^n}) + \mathbb{E}^n_x\left[ e^{-c_n L^{\partial \Omega^n}_t} \big| t \geq \tau_{\Omega^n} \right] \mathbb{P}^n_x(t \geq \tau_{\Omega^n})\\
= & \mathbb{P}^n_x(\tau_{\Omega^n} > t) + \mathbb{E}^n_x\left[ e^{-c_n L^{\partial \Omega^n}_t} \big| L^{\partial \Omega^n}_t >0 \right] \mathbb{P}^n_x(t \geq \tau_{\Omega^n}).
\end{align*}
From Proposition \ref{prop-L-w} and the fact that $\tau_{\Omega^n} \stackrel{law}{\to} \tau_\Omega$, we have that
\begin{equation}
\label{eq-proof618}
\mathbb{P}^n_x(\widehat{\zeta^{\Omega^n}} >t) \stackrel{w^\star}{\to} \mathbb{P}_x(\tau_{\Omega} > t) + \mathbb{E}_x\left[ e^{-c_\infty L^{\partial \Omega}_t} \big| L^{\partial \Omega}_t >0 \right] \mathbb{P}_x(t \geq \tau_{\Omega})
\end{equation}
and therefore we obtain:
\begin{itemize}
\item[i)] if $c_n \to c_0$, by considering that
\begin{align*}
\mathbb{E}_x\left[ e^{-c_0 L^{\partial \Omega}_t}\right] =  \mathbb{E}_x\left[ e^{-c_0 L^{\partial \Omega}_t} | L^{\partial \Omega^n}_t=0 \right] \mathbb{P}_x^n(L^{\partial \Omega^n}_t=0) +  \mathbb{E}_x\left[ e^{-c_0 L^{\partial \Omega}_t} | L^{\partial \Omega^n}_t>0 \right] \mathbb{P}_x^n(L^{\partial \Omega^n}_t>0)
\end{align*}
and the fact that $\mathbb{P}_x(L_t^{\partial \Omega}=0)=\mathbb{P}_x(\tau_\Omega> t)$, we have
\begin{align*}
\mathbb{E}_x\left[ e^{-c_0 L^{\partial \Omega}_t}\right] 
= & \mathbb{E}_x\left[ e^{-c_0 L^{\partial \Omega}_t} \big| L^{\partial \Omega}_t =0 \right] \mathbb{P}_x(\tau_{\Omega} > t) + \mathbb{E}_x\left[ e^{-c_0 L^{\partial \Omega}_t} \big| L^{\partial \Omega}_t >0 \right] \mathbb{P}_x(t \geq \tau_{\Omega})\\
= & \mathbb{P}_x(\tau_{\Omega} > t) + \mathbb{E}_x\left[ e^{-c_0 L^{\partial \Omega}_t} \big| L^{\partial \Omega}_t >0 \right] \mathbb{P}_x(t \geq \tau_{\Omega}).
\end{align*}
From \eqref{eq-proof618}, we obtain
\begin{align*}
\mathbb{P}^n_x(\widehat{\zeta^{\Omega^n}} >t) \stackrel{w^\star}{\to}  \mathbb{E}_x\left[ e^{-c_0 L^{\partial \Omega}_t}\right] = \mathbb{P}_x(\zeta^\Omega >t ), \quad \forall\, t\geq 0,\, (x \in \overline{\Omega})
\end{align*}
and thus, a.s. $T_{c_0} = \zeta^\Omega$;
\item[ii)] if $c_n \to 0$, $\mathbb{P}^n_x(\widehat{\zeta^{\Omega^n}} >t) \stackrel{w^\star}{\to} \mathbb{P}_x(\tau_{\Omega} > t) + \mathbb{P}_x(t \geq \tau_{\Omega}) = 1$ for all $t\geq 0$, $x \in \overline{\Omega}$ and 
\begin{align*}
\mathbb{P}_x(\omega \,:\, \lim_{n\to \infty} \widehat{\zeta^{\Omega^n}}(\omega) = \infty) =1 = \mathbb{P}_x(\omega \,:\, T_{0}(\omega) = \infty);
\end{align*}
\item[iii)] if $c_n \to \infty$, $\mathbb{P}^n_x(\widehat{\zeta^{\Omega^n}} >t) \stackrel{w^\star}{\to} \mathbb{P}_x(\tau_{\Omega} > t)$ for all $t > 0$, ($x \in \Omega$) and we conclude that a.s. $T_{\infty} = \tau_\Omega$.
\end{itemize}
Thus, we obtain that $\widehat{\zeta^{\Omega^n}} \stackrel{law}{\to} T_{c_\infty}$.

With this at hand, we now continue the proof. Since $\widehat{\zeta^{\Omega^n}} \leq \zeta^{\Omega^n_\varepsilon}$ with probability one, we have that $\mathbb{P}^n_x(\zeta^{\Omega^n_\varepsilon} >t) \geq \mathbb{P}^n_x(\widehat{\zeta^{\Omega^n} }>t)$. Thus, we have that $\widehat{\mathbf{P}^n_t} \mathbf{1} \leq \mathbf{P}^n_t \mathbf{1}$ and $\widehat{R^n_\lambda} \mathbf{1} \leq R_\lambda^n \mathbf{1}$ $m$-a.e. $x$. From the contraction property of $\mathbf{P}^n_t$ we have that 
\begin{align}
\label{contr-prop}
\|\widehat{R^n_\lambda} f \|_{L^2(\Omega^n_\varepsilon)} \leq \| R^n_\lambda f \|_{L^2(\Omega^n_\varepsilon)} \leq \lambda^{-1/2} \|f\|_{L^2(\Omega^n_\varepsilon)}
\end{align} 
for all measurable functions $f$. Strong convergence of resolvents in $L^2(\Omega^*)$ implies that, for $f \in L^2(\Omega^n_\varepsilon)$,
\begin{align*}
\lim_{n \to \infty} \| R^n_\lambda f \|_{L^2(\Omega^n_\varepsilon)} = \| R_\lambda f \|_{L^2(\Omega)}.
\end{align*}
Since $R^n_\lambda f$ is uniformly bounded, $\| \widehat{R^n_\lambda} f \|_{L^2(\Omega^n_\varepsilon)} < C$ for all $n$. From this and convergence a.e. we conclude that $\widehat{R^n_\lambda} f \to R_\lambda f$ weakly in $L^2(\Omega)$. Convergence of $\widehat{R^n_\lambda} f$ implies that 
\begin{align*}
\liminf_{n \to \infty} \| \widehat{R^n_\lambda} f \|_{L^2(\Omega^n_\varepsilon)} \geq \| R_\lambda f \|_{L^2(\Omega)}.
\end{align*}
Since we have that
\begin{align*}
\| R_\lambda f \|_{L^2(\Omega)} \leq \liminf_{n\to \infty} \| \widehat{R^n_\lambda} f \|_{L^2(\Omega^n_\varepsilon)} \leq  \liminf_{n \to \infty} \| R^n_\lambda f \|_{L^2(\Omega^n_\varepsilon)} = \lim_{n \to \infty} \| R^n_\lambda f \|_{L^2(\Omega^n_\varepsilon)} = \| R_\lambda f \|_{L^2(\Omega)}
\end{align*}
we conclude that
\begin{align}
\lim_{n \to \infty} \| R^n_\lambda f \|_{L^2(\Omega^n_\varepsilon)} = \lim_{n \to \infty} \| \widehat{R^n_\lambda} f \|_{L^2(\Omega^n_\varepsilon)}. \label{norm-vn}
\end{align}
Weak convergence of $\widehat{R^n_\lambda} f$ together with \eqref{norm-vn} says that $\widehat{R^n_\lambda} f \to R_\lambda f$ strongly in $L^2(\Omega)$.

Now we consider $f_n$ as in \eqref{111}. Since, from \eqref{contr-prop},
\begin{align*}
\| \widehat{R^n_\lambda}f_n - R_\lambda f \|_{L^2(\Omega)} \leq \| \widehat{R^n_\lambda}f_n - \widehat{R^n_\lambda} f \|_{L^2(\Omega)} + \| \widehat{R^n_\lambda} f  - R_\lambda f\|_{L^2(\Omega)}\\
\leq \lambda^{-1/2} \| f_n - f\|_{L^2(\Omega)} +  \| \widehat{R^n_\lambda} f  - R_\lambda f\|_{L^2(\Omega)}
\end{align*} 
we get the strong convergence of $\widehat{R^n_\lambda}f_n$ (or $\lambda \widehat{R^n_\lambda}f_n$).
\end{proof}

In light of the previous results, we can study the convergence of \eqref{semig-gen-A2} by considering the semigroup \eqref{semig-estim} and therefore the corresponding form. By taking into account \eqref{mean-A-pcaf} and \eqref{Rev-Corr}, we can study the form 
\begin{align}
\mathcal{E}^{\mu_{\widehat{A^n}}}_0(u,v) = \widetilde{a_n}(u,v) + \langle u,v\rangle_{\mu_{\widehat{A^n}}}, \quad u,v \in H^1(\mathbb{R}^2)  \cap L^2(\mu_{\widehat{A^n}})  \label{formEdominated}
\end{align}
associated with
\begin{equation}
\mathbb{E}^n_x \left[ f_n(\widetilde{B^{\nu(n),*}_t})\, e^{-c_n \int_0^t \mathbf{1}_{\partial \Omega^n}(\widetilde{B^{\nu(n),*}_s}) ds}  \right]
\label{F-K-sol-n}
\end{equation}
where $\widetilde{a_n}$ is defined in \eqref{a-tilde} and $\mu_{\widehat{A^n}}$ is supported on $\partial \Omega^n$.

\begin{proof}[Proof of Theorem \ref{thm-mu}]
From Theorem \ref{prop-exp} we have that
\begin{align}
\mathbb{E}^n_x \left[ \int_0^{\tau_{\Omega^n_\varepsilon}} e^{-\lambda t-\delta_n t} f_n(\widetilde{B^{\nu,*}_t}) \widehat{M^n_t} dt \right] \to R_\lambda f(x) = \mathbb{E}_x \left[ \int_0^\infty e^{-\lambda t- \delta_0 t} f(B^+_t) M_t dt \right]
\label{proof-res-M}
\end{align}
strongly in $L^2(\Omega)$ where the multiplicative functional $\widehat{M^n_t}$ and the killing functional are associated with the PCAF with Revuz measure $\widehat{\mu_n} = \delta_n + \mu_{\widehat{A^n}} + \infty_{(\Omega^n_\varepsilon)^c}$. Let us relate $R^n_\lambda$ to $\overline{\mu_n} = \delta_n + \mu_{A^n}$ in the same sense. From \eqref{proof-res-M} we have that, $\forall\, f \in C_0^+$,
\begin{align*}
\int f d\overline{\mu_n} \to \int f d\varrho \quad \Leftrightarrow \quad \int f d\widehat{\mu_n} \to \int f d\varrho.
\end{align*}
Indeed, the resolvents identify uniquely the multiplicative functionals. Thus we get that
\begin{align}
\overline{\mu_n} \stackrel{v}{\to} \widehat{\mu_\infty}.  
\label{w-con-mu}
\end{align}
This means equivalence between the corresponding additive functionals. Proposition \ref{prop-last} and Proposition \ref{prop-lastt} authorize us to study the asymptotic behaviour of
$$\widehat{A^n_t} = \frac{c_n}{2} (A^{n+}_t + A^{n-}_t)$$ 
and, by considering $\mu^+_n$ and $\mu^-_n$, with \eqref{ReU-def} and \eqref{Rev-Corr} in mind we get that
\begin{align*}
\lim_{\lambda \to \infty} \langle \lambda U^n_\lambda f, m^n_\varepsilon \rangle  = c_n \sigma_n \int_{\partial \Omega^n} f(x) \mathbf{1}_{\partial \Omega^n}(x) d \mathfrak{s} = \langle f, \mu_{\widehat{A^n}} \rangle .
\end{align*}
From \eqref{w-con-mu},  we get that 
\begin{equation*}
\overline{\mu_\infty} = \delta_0 + \left\lbrace 
\begin{array}{lll}
\displaystyle c_0\, \mu_\alpha\, \mathbf{1}_{\partial \Omega}+ \infty_{\overline{\Omega}^c}, & c_\infty=c_0 \in (0,\infty) & \textrm{(elastic kill)}, \\
\displaystyle 0 + \infty_{\overline{\Omega}^c}, & c_\infty = 0 & \textrm{(no kill)}, \\
\displaystyle \infty_{\Omega^c}, & c_\infty = \infty & \textrm{(kill on the boundary)}.
\end{array}
\right.
\end{equation*}

With \eqref{cond-nu} in mind, we write
\begin{equation}
\lim_{n\to\infty} c_n = \lim_{n\to \infty} \frac{ \sigma_n }{\textrm{tick}(\Sigma^n)} \int_{\partial \Omega^n} \mathbb{P}^n_{x} (B^{\nu(n),*}_t \in \Sigma^n)m^n_\varepsilon(dx) \label{cond-nu-2}
\end{equation}
where $w^n /\textrm{tick}(\Sigma^n) \to 1$ as $n\to \infty$ (tick($\Sigma^n$) is the thickness of the fiber).\\

{\bf iii)} If $c_n \to \infty$, then point iii) of the proof of Theorem \ref{prop-exp} holds and  is also in accord with \eqref{cond-nu-2}, $\tau_{\Omega^n_\varepsilon} \to \tau_\Omega$ faster than $B^{\nu(n),*}_t \to B^+_t$. The BM is killed on the boundary. On the other hand, if $\tau_{\Omega^n_\varepsilon} \to \tau_\Omega$, the lifetime on $\Omega$ of the limit process is exactly $\tau_\Omega$. Then for all $x \in \Omega$ we have that $\mathbb{P}_x(L^{\partial \Omega}_t = 0)=1$ for all $t < \tau_\Omega$ which means that 
\begin{equation*}
\forall\, x \in \Omega, \quad \mathbb{P}_x (e^{-c_n L^{\partial \Omega}_t} =1,  t < \tau_\Omega)=1, \quad \textrm{for all }  n\; (\textrm{for all  } c_n \geq 0).
\end{equation*}
This justifies the convention $\infty \cdot 0 = 0$. Moreover, if the lifetime is $\tau_\Omega$, from \eqref{maps-one}, it must be that $\zeta^n \to 0$, that is $c_n \to \infty$. Thus, $\tau_{\Omega^n_\varepsilon} \to \tau_\Omega$ if and only if condition \eqref{ISO} holds and $\partial \Omega$ becomes a  Dirichlet boundary as $n\to \infty$. We get that $\tau_{\Omega^n_\varepsilon} \to \tau_\Omega \Leftrightarrow c_n \to \infty$. 

{\bf ii)} If $c_n \to 0$, then according with \eqref{cond-nu-2} we say that $B^{\nu(n),*}_t \to B^+_t$ on $\Omega$ for $t \geq 0$ and therefore, $\zeta^{\Omega^n} \to \infty$. On the other hand, Remark \ref{rmk-life-boundary} and Remark \ref{rmk-life-domain} say that $c_n \to 0 \Rightarrow \widehat{\zeta^{\Omega^n}} \to \infty$. Since $\widehat{\zeta^{\Omega^n}} \leq \zeta^{\Omega^n_\varepsilon}$, $\widehat{\zeta^{\Omega^n}} \to \infty$ as well as the lifetime $\zeta^{\Omega^n_\varepsilon} \to \infty$. Moreover, $\zeta^{\Omega^n_\varepsilon} \to \infty \Rightarrow c_n \to 0$ by using Remark \ref{rmk-life-boundary}.

{\bf i)} Since $ii)$ and $iii)$ hold true, if $c_n \to c_0$ and $c_0 \neq 0$ or $c_0 \neq \infty$, then $\zeta^{\Omega^n_\varepsilon} \to T$ and $T \neq  \infty$ or $T \neq \tau_\Omega$. In particular, $T \in (\tau_\Omega, \infty)$ is a random variable depending on $c_0 \in (0, \infty)$. Thus, $\zeta^{\Omega^n_\varepsilon} \to T$ where $T$ depends on $c_0$. From Theorem \ref{prop-exp} we have that $T=\zeta^\Omega$ is an exponential random variable with parameter $c_0$. Thus, $c_n \to c_0 \Leftrightarrow \zeta^{\Omega^n_\varepsilon} \to \zeta^{\Omega}$. In this case (and also the previous as particular cases) we recover the results about the asymptotic of Robin problems on pre-fractal domains (\cite{CAP2} by considering the forms \eqref{formEdominated}).
\end{proof}

Let us focus on Remark \ref{rmk-Cap}. We observe that, as $n\to \infty$,
\begin{align*}
c_n \to c_0  >0 \quad \Leftrightarrow \quad \textrm{\normalfont Cap}_1(\mathbb{R}^2 \setminus \Omega) >0  \qquad \textrm{(transient case)}
\end{align*}
where $\Rightarrow$ immediately follows and $\Leftarrow$ is obtained from $iii)$ and $i)$. Moreover, from $ii)$,
\begin{align*}
c_n \to 0  \quad \Leftrightarrow \quad \textrm{\normalfont Cap}_1(\mathbb{R}^2 \setminus \Omega) = 0 \qquad \textrm{(recurrent case)}.
\end{align*}
Indeed, the process $X_t$ is transient iff Cap$_1(\mathbb{R}^2 \setminus \Omega)>0$ (\cite[Proposition 3.5.10]{chen-book}). A vanishing capacity can be related with the hitting distribution and in particular with the recurrence of the corresponding process. Let us consider the hitting distribution $H^\alpha_{\Lambda^c} \mathbf{1}_E (x) = \mathbb{E}_x[e^{-\alpha \tau_\Lambda} \mathbf{1}_E (B_{\tau_\Lambda})]$ where the expected value is taken under \eqref{hitting-desity}. For $\alpha >0$, $H^\alpha_{\Lambda^c} \mathbf{1}(x) = \mathbb{E}_x[e^{-\alpha \tau_\Lambda}]$. Since $\mathbb{R}^2 \setminus \Omega^n \downarrow \mathbb{R}^2 \setminus \Omega$ are sets of finite capacity, from \cite[Theorem 4.2.1]{FUK-book} we have that 
\begin{align*}
\textrm{Cap}_1(\mathbb{R}^2 \setminus \Omega^n) \to 0 \quad \textrm{iff} \quad H^1_{\mathbb{R}^2 \setminus \Omega^n} \mathbf{1}(x) \to 0 \;\; \textrm{q.e.}
\end{align*}
and thus, for $x \in \Omega^1_\varepsilon$, $\mathbb{P}_x(\tau_{\Omega^n} < \infty) \to 0$ (see also \eqref{def-life-sequence-2}).


\small

\end{document}